\theoremstyle{plain}
\newtheorem{prop}{Proposition}[section]
\newtheorem{thm}[prop]{Theorem}
\newtheorem{coro}[prop]{Corollary}
\newtheorem{lemma}[prop]{Lemma}
\theoremstyle{definition}
\newtheorem{defi}[prop]{Definition}
\theoremstyle{remark}
\newtheorem{remark}{Remark}
\numberwithin{table}{section}
\DeclareMathOperator{\cond}{cond}
\DeclareMathOperator{\Frob}{Frob}
\DeclareMathOperator{\Id}{Id}
\DeclareMathOperator{\lcm}{lcm}
\DeclareMathOperator{\Cl}{Cl}
\DeclareMathOperator{\Gal}{Gal}
\DeclareMathOperator{\Ind}{Ind}
\newcommand{\F}{\mathbb F}
\newcommand{\Om}{{\mathscr{O}}}
\newcommand{\Disc}{\Delta}
\newcommand{\GL}{{\rm GL}}
\newcommand{\SL}{{\rm SL}}
\newcommand{\E}{E_{(a,b,c)}}
\newcommand{\Et}{\widetilde{E}_{(a,b,c)}}
\newcommand{\cyclo}{\chi_{\text{cyc}}}
\newcommand{\verbatimfont}[1]{\renewcommand{\verbatim@font}{\ttfamily#1}}
\def\ZZ{\mathbb Z}
\def\RR{\mathbb R}
\def\FF{\mathcal F}
\def\QQ{\mathbb Q}
\def\II{\mathbb I}
\def\AA{\mathbb A}
\def\CC{\mathbb C}
\def\<#1>{{\left\langle{#1}\right\rangle}}
\def\Z{{\mathbb Z}}             
\def\Q{{\mathbb Q}}             
\def\id#1{{\mathfrak{#1}}}      
\DeclareMathOperator{\norm}{{\mathscr N}}
\DeclareMathOperator{\trace}{{\mathrm{Tr}}}
\newcommand{\lmfdbec}[3]{\href{http://www.lmfdb.org/EllipticCurve/Q/#1/#2/#3}{{\text{\rm#1-#2#3}}}}
\newcommand{\lmfdbecnf}[4]{\href{http://www.lmfdb.org/EllipticCurve/#1/#2/#3/#4}{{\text{\rm#1-#2-#3#4}}}}
\newcommand{\lmfdbbmf}[2]{\href{http://www.lmfdb.org/ModularForm/GL2/ImaginaryQuadratic/#1/#2}{{\text{\rm#1-#2}}}}
\let\kro\dkro
\begin{document}

\title{$\Q$-curves, Hecke characters and some Diophantine equations.}

\author{Ariel Pacetti}
\address{Center for Research and Development in Mathematics and Applications (CIDMA),
	Department of Mathematics, University of Aveiro, 3810-193 Aveiro, Portugal}
\email{apacetti@ua.pt}
\thanks{AP was partially supported by FonCyT BID-PICT 2018-02073 and by
the Portuguese Foundation for Science and Technology (FCT) within
project UIDB/04106/2020 (CIDMA). LVT was supported by a CONICET grant.}

\author{Lucas Villagra Torcomian}
\address{FAMAF-CIEM, Universidad Nacional de
  C\'ordoba. C.P:5000, C\'ordoba, Argentina.}
\email{lucas.villagra@unc.edu.ar}

\keywords{$\Q$-curves, Fermat equations}
\subjclass[2010]{11D41,11F80}

\begin{abstract}
  In this article we study the equations $x^4+dy^2=z^p$ and
  $x^2+dy^6=z^p$ for positive square-free values of $d$. A Frey curve over
  $\Q(\sqrt{-d})$ is attached to each primitive solution, which
  happens to be a $\Q$-curve.  Our main result is the construction of
  a Hecke character $\chi$ satisfying that the Frey elliptic curve
  representation twisted by $\chi$ extends to $\Gal_\Q$, therefore (by
  Serre's conjectures) corresponds to a newform in
  $S_2(n,\varepsilon)$ for explicit values of $n$ and
  $\varepsilon$. Following some well known results and elimination
  techniques (together with some improvements) it provides a
  systematic procedure to study solutions of the above equations and
  allows us to prove non-existence of non-trivial primitive solutions
  for large values of $p$ of both equations for new values of $d$.
\end{abstract}

\maketitle


\section*{Introduction}
Since Wiles' proof of Fermat's last theorem, there has been an
increasing interest in solving different Diophantine equations. An
open challenging problem is to understand solutions of a generalized
Fermat type equation of the form
\begin{equation}
\label{eq:generalizedfermat}
Ax^p+By^q=Cz^r.
\end{equation}
In \cite{MR1348707} it was proven that for each triple of exponents
$(p,q,r)$ satisfying $\frac{1}{p}+\frac{1}{q}+\frac{1}{r}<1$, the set
of integral primitive solutions is finite (i.e. the surface has
finitely many integral points). Recall that a solution $(a,b,c)$ to
(\ref{eq:generalizedfermat}) is called \emph{primitive} if the numbers
$\{aA,bB,cC\}$ are pairwise coprime. As explained in \cite{MR1468926},
there are many instances where equation~(\ref{eq:generalizedfermat})
posses infinitely many non-primitive solutions, one of the reason to
restrict to primitive ones.

Nowadays there exists a sort of ``guideline'' to study solutions of
equation~(\ref{eq:generalizedfermat}). A short version of the
procedure (more details will be given in
Section~\ref{section:generalstrategy}) is the following: firstly
attach to a non-trivial primitive solution an odd, two-dimensional
Galois representations (with values in $\overline{\F_p}$ or in
$\overline{\Q_p}$) with very special properties, for example that its
conductor is only divisible by the primes dividing $ABC$ (or its
reduction does). The next step is to prove modularity of the
representation (which in the case of rational representations follows
mostly by Serre's conjectures), i.e. prove that such representation matches
that of a newforms of ``known'' level and Nebentypus, independent of
the solution (in many circumstances this goal is achieved using some
result like Ribet's lowering the level one). At last, one
computes the particular spaces of modular forms and use different
elimination techniques (some will be explained in
Section~\ref{section:methostodiscard}) aiming to prove that all
computed newforms are not related to solutions, hence solutions cannot
exist.

The present article started as an attempt to study particular cases of equation~(\ref{eq:generalizedfermat}), namely equation
\begin{equation}
  \label{eq:24p}
x^4+dy^2=z^p,
\end{equation}
and equation
\begin{equation}
  \label{eq:ben-chen}
  x^2+dy^6=z^p,
\end{equation}
for positive square-free values of $d$.  For such equations, a
solution is called \emph{trivial} if one if its coordinate is zero,
for example the solutions $(\pm 1,0,1)$ are trivial solutions of both
(\ref{eq:24p}) and (\ref{eq:ben-chen}) for all values of $p$.  The
first equation was studied in \cite{MR2075481} for $d=1$ and in
\cite{MR2561200} for $d=2,3$.  The articles \cite{MR2075481} and
\cite{MR2561200} proved what might be called ``asymptotic results'',
namely the existence of a constant $N_d$ such that all non-trivial
primitive solutions of equation~(\ref{eq:24p}) have exponent
$p \le N_d$.  Explicitly, the constants are $N_d = 211, 349, 131$ for
$d=1,2,3$ respectively. The main contribution of \cite{MR2646760} was
to extend the result for small values of $p$, proving non-existence of
non-trivial primitive solutions for exponents $n \ge 4$ (not
necessarily prime) when $d=1$ and the existence of a unique
non-trivial primitive solution for exponents $n \ge 4$ when $d=2$.

Equation~(\ref{eq:ben-chen}) was studied in \cite{MR2966716} for $d=1$
and in \cite{Angelos} for $d=3$, where non-existence of primitive
non-trivial solutions was proved for all exponents $n \ge 3$ when
$d=1$, while there exist a unique primitive solution for exponents $n \ge 4$ when
$d=3$. The crucial connection between equations (\ref{eq:24p}) and
(\ref{eq:ben-chen}) is that in both cases, to a putative primitive
non-trivial solution $(a,b,c)$ one attaches an elliptic curve over the
quadratic extension $K=\Q(\sqrt{-d})$ that has the property of being a
$\Q$-curve (i.e. an elliptic curve $E$ whose Galois conjugates are
isogenous to $E$), fulfilling the first step of the general strategy
shortly described before.  In the case of equation~(\ref{eq:24p}), the
representation comes from the elliptic curve (proposed in
\cite{MR2561200}) given by the equation
\begin{equation}
\label{eq:freycurve}
  \E: y^2=x^3+4ax^2+2(a^2+\sqrt{-d}b)x.
\end{equation}
In the case of equation~(\ref{eq:ben-chen}), in \cite{MR2966716} and
\cite{Angelos} the authors attach to a solution $(a,b,c)$ a $\Q$-curve
with a $3$-torsion point. Generalizing their ideas (and using the
description of curves with a $3$-torsion point given by Kubert
(\cite{Kubert}) we attach to a solution $(a,b,c)$ of
(\ref{eq:ben-chen}) the $\Q$-curve
\begin{equation}
\label{eq:FC-BC}
  \Et :y^2+6b\sqrt{-d}xy-4d(a+b^3\sqrt{-d})y = x^3,
\end{equation}
where $(0,0)$ is its natural rational point of order $3$. Part of our
contribution to study solutions of (\ref{eq:ben-chen}) lies in the
fact that such an elliptic curve has a Galois representation
fulfilling the requirements of the general strategy (as will be proved
in Section~\ref{section:properties}).

A key property of $\Q$-curves is that their Galois representations can
be ``extended'' to the whole Galois group, providing the second result
of the general strategy. More concretely, a result of Ribet
(\cite{MR2058653}) implies that if $E/K$ is a $\Q$-curve then there
exists a character $\chi$ such that the twisted Galois representation
$\rho_{E,p} \otimes \chi$ extends to the whole Galois group
$\Gal_\Q$. Ribet's result is not explicit, as it depends on
trivializing a cocycle naturally attached to the $\Q$-curve $E$.  In
the aforementioned articles, the way the cocycle was trivialized was using 
an algorithm due to Quer (\cite{MR1770611}) which gives an ad-hoc
element (via Hilbert's 90 theorem) after a tedious search for it. The
disadvantage of such an approach is that a priori there is no control
on the ramification of the character nor a clear description of the
character itself.

One of the main contributions of the present article is to provide a
(computable) alternative to Ribet's approach.  Namely, we give an
explicit description of a Hecke character $\chi$ such that the Galois
representation $\rho_{\E,p}\otimes \chi$ extends to the whole Galois
group $\Gal_\Q$ (and a respective result for the representation
$\rho_{\Et,p}$). To explain our approach, let us introduce some
notation that will be used (and recalled) during the article. If $t$
is an integer, let $\psi_{t}$ denote the quadratic character
corresponding to the quadratic extension $\Q(\sqrt{t})/\Q$. Let
$L/\Q$ be a Galois extension, let
$\rho:\Gal_L \to \GL_2(\overline{\Q_p})$ be a Galois representation
and let $\tau \in \Gal_\Q$. By $^\tau\rho$ we denote the Galois
representation of $\Gal_L$ given by
$^\tau\rho(\sigma) = \rho(\tau \sigma \tau^{-1})$.

A key property satisfied by the curve $\E$ is that if $\tau$ denotes
any element of $\Gal_\Q$ not trivial on $K$, then
$^\tau\rho_{\E,p} \simeq \rho_{E,p} \otimes \psi_{-2}$ (see
Proposition~\ref{prop:qcurveE}). Analogously, under the same
hypothesis, $^\tau\rho_{\Et,p} \simeq \rho_{\Et,p} \otimes \psi_{-3}$
(see Proposition~\ref{prop:qcurveE2}). Recall that a representation of
$\Gal_K$ extends to $\Gal_\Q$ if and only if $^\tau\rho \simeq \rho$,
for $\tau$ as before. Although this result is well known to experts, a proof
of it will be provided in Theorems \ref{thm:levelandnebentypus} and
\ref{thm:levelandnebentypus2}, as we need control on the extension
conductor. Note that is we can construct a finite
order Hecke character $\chi_t: \Gal_K \to \CC^\times$ satisfying
\[
  ^\tau \chi_t = \chi_t \cdot \psi_{-t},
\]
for $t=2, 3$, then the twisted representation
$\rho_{\E,p}\otimes \chi_2$ (respectively $\rho_{\Et,p}\otimes\chi_3$)
does extend to $\Gal_\Q$. The main contribution of the present article
(as developed in Section~\ref{section:character}) is to give a general
strategy to construct the Hecke character $\chi_t$. Furthermore, we
give a concrete explicit construction of it in the cases $t=2$ and $t$
a prime number which is congruent to $3$ modulo $4$. This result might
be of independent interest, as it could be applied to other
Diophantine problems involving $\Q$-curves.

While constructing the Hecke character, we restrict to positive values
of $d$, since one of the steps of the construction involves verifying
a compatibility condition at units of the base field. This technical
problem is in general hard to verify for real quadratic fields due to
the existence of fundamental units. Nevertheless, we are able to prove
that our construction works when the fundamental unit of
$\Q(\sqrt{d})$ (for $d$ positive) has norm $-1$. The case of general
real quadratic fields (and applications to Diophantine problems) can
be found in the sequel \cite{2103.06965}.

By construction, the Hecke character $\chi_t$ is ramified only at
primes ramifying in $K/\Q$ and at $t$. Going back to the problem of
extending the representation $\rho_{\E,p}$ (respectively
$\rho_{\Et,p}$), this fact allows us to prove that the extended
representation has residual image of conductor divisible only by
primes dividing $6d$. In particular, we construct a rational
representation (whose modularity follows from Serre's conjectures)
with small conductor, fulfilling the first two steps of the general
strategy. Moreover, in Corollaries~\ref{coro:loweringlevelE} and
\ref{coro:loweringlevelEt} we give an explicit description of the
conductor and Nebentypus of such residual representations.

To apply the last steps of the general strategy, one needs a result on
the image of the residual Galois representation (needed in Ribet's
lowering the level result) and an elimination procedure. For our
purpose, the main result regarding residual images of Galois
representations coming from $\Q$-curves is due to Ellenberg
(\cite[Theorem 3.14]{MR2075481}), which states that if a $\Q$-curve $E$ over a
quadratic field $K$ happens to have a prime of multiplicative
reduction larger than $3$, then for all large enough primes $p$ (say
$p >N_K$), the residual image is ``large'' (i.e. its projective image
contains $\SL_2(\F_p)$). Moreover, Elleberg's article contains bounds
that allows to compute explicitly the constant $N_K$, which depends
only on the base field $K$ and the degree of the isogenies between $E$
and its Galois conjugates.  We included some improvements from the
literature to Ellenberg's original result, and we wrote a
\verb*|PARI/GP| script to compute the bound needed for the
examples. 

After explaining how Ellenberg's result is important to eliminate
newforms with complex multiplication, we recall a strategy due to
Mazur that we learned from \cite{MR3098134}. It is the main tool used
in the present article to discard newforms as not coming from
solutions of our equation. All the previous techniques/constructions
are used to study solutions of equations (\ref{eq:24p}) and
(\ref{eq:ben-chen}) for the values $d=1, 2, 3, 5, 6$ and $7$ not
studied before. The choice of the values for $d$ is arbitrary, since in
principle our method could be applied to other values of $d$, within the 
computational limitations of the algorithms used to construct newforms.

We succeed to prove non-existence
of solutions in all cases but $d=5$ and $d=7$ for equation
(\ref{eq:ben-chen}) (where the existence of newforms satisfying all
the required properties makes the classical approach to fail). In
\cite{2109.07291} more advanced elimination techniques are used to get
partial results for these two cases as well.

The article is organized as follows: in
Section~\ref{section:generalstrategy} we give details of a general
strategy due to Darmon for studying solutions of
equation~(\ref{eq:generalizedfermat}). In
Section~\ref{section:properties} we study the main properties of the
curves (\ref{eq:freycurve}) and (\ref{eq:FC-BC}). In particular, we
prove that they are $\Q$-curves, compute their complete field of
definition, and their local type at primes of bad reduction (getting
in particular a formula for their
conductor). Section~\ref{section:character} is devoted to the
construction of the Hecke character $\chi_t$. It starts with a general
idea of its construction and main properties, while the proper
construction and proofs are given in Subsection~\ref{section:t=2} for
$t=2$ and in Subsection~\ref{section:t=3} for $t$ any prime number
congruent to $3$ modulo $4$. The proofs are a little tedious, so we
suggest the reader to avoid reading them on a first read of the
article.  The way to define the character $\chi_t$ is to first give
its local ramification (i.e. the value of the restriction of the local
components to the local ring of integers) and then extend it to the
whole id\`ele group. Although it is not clear how the local
definitions were obtained, they are the result of many computational
experiments done with a script wrote in \verb*|PARI/GP| (\cite{PARI2})
for that purpose, and a meticulous task of finding patterns on the
outputs. Such experiments allowed us to give a very clean statement;
once the character definition is given, one is only left to check that it
satisfies the expected properties.

Section~\ref{section:extension} contains a proof of the extension
result. As mentioned before, although it is well known to experts, we
included its proof for two reasons: on the one hand, we could not find
a clear reference to its proof, but also because we need the explicit
result on the conductor of the extended representation. The section is
split into two parts: one for the representation $\rho_{\E,p}$ and
another one for the representation $\rho_{\Et,p}$. In both cases, a formula for the conductor of the residual extended representation is given.

Section~\ref{section:levellowering} is devoted to study conditions to
assure absolutely irreducible image of the residual representation. As
explained before, the main result is due to Ellenberg. The same section
contains a strategy to be used when the curve $E$ does not have the
prime of multiplicative reduction needed to apply Ellenberg's result.
Section~\ref{section:methostodiscard} contains a description of
Mazur's method, the main tool used to discard newforms. It also
contains some remarks on how to handle newforms with complex
multiplication, and the role of the trivial solutions in the
elimination procedure.

At last, Section~\ref{section:solvingequation1} contains results on
equation~(\ref{eq:24p}) for $d=5$ (Theorem~\ref{thm:eq1d5}), $d=6$
(Theorem~\ref{thm:eq1d6}) and $d=7$ (Theorem~\ref{thm:eq1d7}), while
Section~\ref{section:solvingequation2} contains solutions and attempts
to study primitive solutions of equation~(\ref{eq:ben-chen}) for $d=2, 5, 6$ and
$7$. We do get non-existence results for $d=2$
(Theorem~\ref{thm:eq2d2}) and for $d=6$ (Theorem~\ref{thm:eq2d6}) but
the elimination procedure fails in the cases $d=5, 7$ due to the
existence of newforms (without complex multiplication) that
systematically pass Mazur's test. In \cite{2109.07291} some
partials results are presented for both such equations.

The present article depends on different scripts that can be
downloaded from the web page
\url{http://sweet.ua.pt/apacetti/research.html}. There are mainly two
different scripts used. One of them (written in \verb*|PARI/GP| \cite{PARI2})
is the one used to give an explicit bound for Ellenberg's result (as
explained in the proof of Theorem~\ref{thm:ellenberg}), called
\textit{``Ellenberg.gp''}. The second ones are used to eliminate
newforms (mostly an implementation of Mazur's trick) in \verb*|Magma|
(\cite{MR1484478}). There is one script for each equation (as it
includes the definition of the character $\chi_t$); the scripts used
for equation~(\ref{eq:24p}) are \textit{``Eq1d5.mg''},
\textit{``Eq1d6.mg''} and \textit{``Eq1d7.mg''} which depend on the
file \textit{``Mazur26p.mg''}. The scripts used for
equation~(\ref{eq:ben-chen}) are 
\textit{``Eq2d5.mg''}, \textit{``Eq2d6.mg''} and
\textit{``Eq2d7.mg''}. The outputs of the scripts are available in the files \textit{``OutputsEq1.txt''} and \textit{``OutputsEq2.txt''}.

\subsection*{Acknowledgments} We would like to thank John Cremona for
many useful conversations regarding computing with Bianchi modular
forms, and for explaining us how to use his code to compute them.

\section{General strategy}
\label{section:generalstrategy}
Based on Frey-Hellegouarch's approach of relating a solution to
Fermat's equation with a ``special'' elliptic curve, Darmon in
\cite{MR1756104} proposed a generalization of the strategy to study a
general Fermat-type equation of the
form~(\ref{eq:generalizedfermat}). The strategy consists roughly
in the following three steps:
\begin{enumerate}[(1)]
\item \textbf{Construct a Galois representation:} to a putative
  solution $(a,b,c)$ of equation~(\ref{eq:generalizedfermat}), attach
  an odd residual Galois representation
  $\overline{\rho}_{(a,b,c)}:\Gal_K \to \GL_2(\F)$ that is unramified at
  primes not dividing $ABC$ and $\ell$, where $\F$ is a finite field
  of characteristic $\ell$ and $K$ is a finite extension of $\Q$
  depending on the exponents $(p,q,r)$.
\item \textbf{Prove Modularity of $\overline{\rho}_{(a,b,c)}$:} prove that
  $\overline{\rho}_{(a,b,c)}$ matches the reduction of a Galois
  representation attached to an automorphic representation of
  $\GL_2(\AA_K)$ whose level is only divisible by primes dividing the
  Artin conductor of $\overline{\rho}_{(a,b,c)}$.

\item \textbf{Reach a Contradiction:} compute the space of
  automorphic representations of the last item, and prove that none is
  related to a possible solution.
\end{enumerate}
The way the representation $\overline{\rho}_{(a,b,c)}$ is constructed
in \cite{MR1756104} (that covers most possible exponents $(p,q,r)$ but
not all) is via the specialization of a parametric family
$\overline{\rho}(t)$ at the point $t=\frac{Aa^p}{Cc^r}$. Furthermore,
in the aforementioned article, Darmon gives an equation of a curve
$C(a,b,c)$ whose natural residual Galois representation (via the
action of the Galois group $\Gal_\Q$ on the $p$-torsion points of its
Jacobian) gives raise to $\overline{\rho}_{(a,b,c)}$.  In our case of
interest, as mentioned in the introduction, to study solutions
of~(\ref{eq:24p}), in \cite{MR2561200} the authors attach to a
solution the elliptic curve
\begin{equation*}
\E: y^2=x^3+4ax^2+2(a^2+\sqrt{-d}b)x,  
\end{equation*}
defined over the field $K = \Q(\sqrt{-d})$. For studying the
equation~(\ref{eq:ben-chen}), we propose as a natural object
attached to a solution $(a,b,c)$ the elliptic curve
\begin{equation*}
\Et:y^2+6b\sqrt{-d}xy-4d(a+b^3\sqrt{-d})y = x^3,
\end{equation*}
defined again over the quadratic field $K=\Q(\sqrt{-d})$ (generalizing the construction in \cite{MR2966716}).  Note that
if $d>0$, then both curves are defined over an imaginary quadratic
field. For that purpose, let us explain in more detail what the second
step ``prove modularity'' means. There should exist an automorphic
representation $\Pi$ of $\GL_2(\AA_K)$ with the following properties:
\begin{itemize}
\item Let $L$ denote the coefficient field of the automorphic
form $\Pi$ and let $\Om$ denote its ring of integers. Then for each
prime ideal $\lambda$ of $\Om$, there exists a Galois representation
$\rho_{\Pi,\lambda}:\Gal_K \to \GL_2(\Om_{\lambda})$ such that
$L(\Pi,s) = L(\rho_{\Pi,\lambda},s)$.
\item There exists a prime $\lambda$ dividing $\ell$ such that the residual representation $\overline{\rho_{\Pi,\lambda}}$ is isomorphic to $\overline{\rho}_{(a,b,c)}$.
\end{itemize}
The curves $C(a,b,c)$ given in \cite{MR1756104} are all defined over
$\Q$ and attain their full endomorphism ring over a totally real
field, hence the relation between automorphic representations, Hilbert
modular forms and Galois representations is well known in this
case. Although our curves are defined over imaginary quadratic fields,
they are what is called a ``$\Q$-curve'' (as will be explained in
Section~\ref{section:properties}), which implies that their Galois
representations are related to classical weight $2$ modular forms.
The existence of a Bianchi modular whose $L$-series matches the one of
our elliptic curve then follows from Langlands results on cyclic base
change. While solving equation~(\ref{eq:ben-chen}) for $d=2$ this fact
will be extremely useful, as computing Bianchi modular forms is more
effective in this case (all other computations will involve classical
modular forms).

\section{$\Q$-curves and properties of $\E$ and $\Et$} \label{section:properties}

\begin{defi}
  Let $L$ be a number field and $E/L$ an elliptic curve. The curve $E$
  is called a \emph{$\Q$-curve} if for all $\sigma \in \Gal_\Q$, the curve
  $\sigma(E)$ is isogenous to $E$.
\end{defi}

The isogeny between $E$ and $\sigma(E)$ needs not be defined over
$L$. The minimum field where the curve and all the isogenies are
defined is usually called the field of total definition. Let
$\tau \in \Gal_\Q$ be any element whose restriction to $\Q(\sqrt{-d})$
is not the identity and let $\psi_{-2}$ denote the quadratic character
corresponding to the quadratic extension $\Q(\sqrt{-2})/\Q$.

\begin{prop}
  The elliptic curve $\E$ is a $\Q$-curve which is totally defined
  over the field $\Q(\sqrt{-d},\sqrt{-2})$. Furthermore, $\tau(E)$ is
  isogenous to the quadratic twist $E \otimes \psi_{-2}$.
  \label{prop:qcurveE}
\end{prop}
\begin{proof}
  This result is explained in \cite{MR2561200}. Let $\tau$ be a
  non-trivial generator of $\Gal(\Q(\sqrt{-d})/\Q)$. The point $(0,0)$
  has order two, and (as explained in \cite[Example 4.5]{MR2514094})
  the quotient has equation
\[
y^2=x^3-8ax^2+8(a^2-\sqrt{-d}b)x.
\]
An easy change of variables proves that it equals the quadratic twist
by $-2$ of $\tau(E)$. In particular, the curve and the isogenies are all defined over the field $\Q(\sqrt{-d},\sqrt{-2})$ as claimed.
\end{proof}

Let $\psi_{-3}$ denote the quadratic character corresponding to the
quadratic extension $\Q(\sqrt{-3})/\Q$.

\begin{prop}
  The elliptic curve $\Et$ is a $\Q$-curve which is totally defined
  over the field $\Q(\sqrt{-d},\sqrt{-3})$. Furthermore, $\tau(\Et)$
  is isogenous to the quadratic twist $\Et \otimes \psi_{-3}$.
  \label{prop:qcurveE2}
\end{prop}

\begin{proof}
  As explained in \cite[Table 1]{Kubert}, all elliptic
  curves having a $3$-torsion point have a minimal model of the form:
\[
E:y^2+a_1xy+a_3y = x^3,
\]
where $P=(0,0)$ is a point of order $3$. Its $3$-isogenous curve (obtained
as the quotient of the curve by the order $3$ group generated by $P$)
has equation
\[
y^2+a_1xy+a_3y = x^3-5a_1a_3x-a_1^3a_3-7a_3^2.
\]
Our curve $\Et$ corresponds to the values
$a_1 = 6b\sqrt{-d}$, $a_3 = -4d(a+b^3\sqrt{-d})$. The previous formula
implies that the quotient $\Et$ by
$\langle(0,0)\rangle$  has equation
\begin{multline}
\label{eq:3quot}
y^2+6b\sqrt{-d}xy-4d(a+b^3\sqrt{-d})y = x^3+\\
(-120b^4d^2+120abd\sqrt{-d})x + 976b^6d^3-1088ab^3d^2\sqrt{-d}-112a^2d^2. 
\end{multline}
On the other hand, if $\tau$ generates the Galois group
$\Gal(\Q(\sqrt{-d})/\Q)$, clearly
$\tau(\Et) = \widetilde{E}_{(a,-b,c)}$. The
quadratic twist of $\widetilde{E}_{(a,-b,c)}$ by $\sqrt{-3}$ (which
can be computed for example using \cite{PARI2}) corresponds to the
equation
\begin{multline}
\label{eq:twist-3}
y^2+6b\sqrt{-d}xy+12d(-a+b^3\sqrt{-d})y = x^3+36b^2dx^2+\\(144abd\sqrt{-d}+144b^4d^2)x+288ab^3d^2\sqrt{-d}+144b^6d^3-144a^2d^2.
\end{multline}
Via the usual change of variables (making $a_1=a_3=a_2=0$) it is easy
to check that both (\ref{eq:twist-3}) and (\ref{eq:3quot}) translate
to the curve
\[
y^2=x^3 + (108abd\sqrt{-d} - 135b^4d^2)x   - 756ab^3d^2\sqrt{-d} +594b^6d^3 - 108a^2d^2.
\]
In particular, $\tau(\Et)$ is isogenous to the quadratic twist of
$\Et$ by $\sqrt{-3}$. Hence $\Et$ is a $\Q$-curve and its complete field
of definition equals $\Q(\sqrt{-d},\sqrt{-3})$ as claimed.
\end{proof}

Let $K =\Q(\sqrt{-d})$ and $\Om_K$ its ring of integers. The following trivial result will be useful later. 

\begin{lemma}
  Let $(a,b,c)$ be a primitive solution of either equation
  (\ref{eq:24p}) or equation~(\ref{eq:ben-chen}) with $p>3$. Then
  \begin{itemize}
  \item $\gcd(ac,d)=1$.
		
  \item If $d \equiv 1, 3, 5 \pmod 8$ only one of $\{a,b\}$ is even    and the other one is odd.
  \item If $2$ does not split in $K$ (i.e. $d\not\equiv7\pmod8$), then $c$ is odd.
  \item If $3$ does not split in $K$ (i.e. $d\not\equiv2\pmod3$), then $c$ is not divisible by $3$.
  \end{itemize}
\label{lemma:odd}
\end{lemma}

\subsection{Properties of $\E$}
Recall the defining equation:
\[
  \E: y^2=x^3+4ax^2+2(a^2+b\sqrt{-d})x.
\]
Its discriminant equals $\Disc(\E) = 512(a^2+b\sqrt{-d})c^p$ and its
$j$-invariant equals
$j(\E)=\frac{64(5a^2-3b\sqrt{-d})^3}{c^p(a^2+b\sqrt{-d})}$.

\begin{lemma}
  \label{lemma:notramification}
  Suppose that $q$ is an odd rational prime ramified at $K/\Q$ and let
  $\id{q}$ denote the (unique) prime ideal in $\Om_K$ dividing $q$. Then
  $\id{q}\nmid \Disc(\E)$.
\end{lemma}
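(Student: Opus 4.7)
The plan is to directly analyze the $\id{p}$-adic valuation of each factor of $\Disc(E)=512(A^2+rB)C^p$. First I would record the basic facts about $\id{p}$: since $p$ is an odd rational prime ramified in $K/\Q$, we have $p\mid d$ (odd ramified primes divide the discriminant of $K$), the unique prime $\id{p}$ above $p$ satisfies $\id{p}^2=(p)$, and $\id{p}\cap\Z=(p)$. In particular $v_{\id{p}}(r)\geq 1$ because $r^2=-d$ and $p\mid d$, and $\id{p}\nmid 2$ because $p$ is odd, so $v_{\id{p}}(512)=0$.

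Next I would check the factor $A^2+rB$. If $\id{p}\mid(A^2+rB)$, then using $\id{p}\mid r$ we conclude $\id{p}\mid A^2$, hence $\id{p}\mid A$, hence $p\mid A$. Combined with $p\mid d$ this contradicts the fact recalled just before the lemma that $\gcd(A,d)=1$ for any primitive solution. Therefore $v_{\id{p}}(A^2+rB)=0$.

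Finally I would handle $C^p$. If $\id{p}\mid C$ then $p\mid C$, so from $C^p=A^4+dB^2$ and $p\mid d$ one gets $p\mid A^4$, i.e.\ $p\mid A$, again contradicting $\gcd(A,d)=1$. Hence $v_{\id{p}}(C^p)=0$, and assembling the three vanishings gives $v_{\id{p}}(\Disc(E))=0$.

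There is no real obstacle here: the whole argument is a short primitivity/valuation computation. The only subtlety worth flagging explicitly is the step $\id{p}\mid A^2\Rightarrow\id{p}\mid A\Rightarrow p\mid A$, which uses that $\id{p}$ is a prime ideal and that $\id{p}\cap\Z=(p)$ (so rational integers divisible by $\id{p}$ are divisible by $p$); this is what lets us feed the argument back to the integer hypothesis $\gcd(A,d)=1$.
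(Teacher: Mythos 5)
Your proposal is correct and is essentially the paper's own argument: both use that $\id{p}\mid r$ (since $p\mid d$) together with $\gcd(A,d)=1$ to rule out $\id{p}$ dividing $A^2+rB$ and $C^p$, with the student merely spelling out the $512$ and $C^p$ steps that the paper leaves implicit.
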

\begin{proof}
  Since $q$ is ramified, $\id{q} \mid \sqrt{-d}$, and since $(a,b,c)$
  is a primitive solution, $\id{q}\nmid a$. Then
  $\id{q} \nmid c^p(a^2+\sqrt{-d}b)$.
\end{proof}

\begin{lemma}
  Let $\id{q}$ be an odd prime ideal of $\Om_K$ such that $\id{q}\mid\Disc(\E)$. Then
  $\E$ has multiplicative reduction at $\id{q}$.
	\label{lemma:multred}
\end{lemma}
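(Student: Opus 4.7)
The plan is to verify that the $c_4$-invariant of $E$ is a unit at $\id{p}$; since we already know $\id{p}\mid\Delta(E)$, this will force the reduction to be multiplicative.

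First I would compute the standard invariants of the Weierstrass model \eqref{eq:freycurve}. Using $a_2=4A$, $a_4=2(A^2+rB)$ and $a_1=a_3=a_6=0$, one gets $b_2=16A$, $b_4=4(A^2+rB)$, and so
\[
c_4 = b_2^2-24b_4 = 256A^2-96(A^2+rB) = 32\bigl(5A^2-3rB\bigr).
\]
Since $\id{p}$ is odd, $\id{p}\nmid 32$, so the problem reduces to showing $\id{p}\nmid 5A^2-3rB$.

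Next I would use primitivity to constrain the divisibility pattern. Since $\id{p}\mid\Delta(E)=512(A^2+rB)C^p$ and $\id{p}$ is odd, $\id{p}$ divides one of the two factors $A^2\pm rB$ (noting $C^p=(A^2+rB)(A^2-rB)$); by the $\tau$-symmetry we may assume $\id{p}\mid A^2+rB$. By Lemma~\ref{lemma:notramification}, $\id{p}$ is unramified in $K/\Q$, so $\id{p}\nmid r$ (otherwise $\id{p}^2\mid d$, forcing ramification at the underlying rational prime). If $\id{p}\mid A$, then $\id{p}\mid rB$ and hence $\id{p}\mid B$; together with $\id{p}\mid C$ (which follows from $\id{p}\mid A$ and $\id{p}\mid A^2+rB$ via $C^p=A^4+dB^2$) this contradicts primitivity. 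Therefore $\id{p}\nmid A$.

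Finally, working modulo $\id{p}$, the congruence $A^2\equiv -rB\pmod{\id{p}}$ yields
\[
5A^2-3rB \equiv -5rB-3rB \equiv -8rB \pmod{\id{p}}.
\]
Since $\id{p}$ is odd and $r$ is a $\id{p}$-adic unit, $\id{p}\mid 5A^2-3rB$ would force $\id{p}\mid B$, and then $\id{p}\mid A^2$ contradicts $\id{p}\nmid A$. Hence $\id{p}\nmid c_4$, and combined with $\id{p}\mid\Delta(E)$ this means the reduction type is multiplicative. The main (mild) obstacle is just the bookkeeping in ruling out simultaneous divisibility by both $A^2+rB$ and $A^2-rB$, which is handled by primitivity and the fact that $\id{p}$ is unramified in $K/\Q$.
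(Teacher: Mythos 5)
Your proof is correct and rests on the same two ingredients as the paper's: Lemma~\ref{lemma:notramification} (so $\id{p}\nmid r$) together with primitivity to conclude $\id{p}\nmid A$, after which the singularity is seen to be a node. The only cosmetic difference is that you certify this via $\id{p}\nmid c_4=32(5A^2-3rB)$, whereas the paper observes directly that $\id{p}\nmid 4A$ prevents the reduced cubic from acquiring a triple root; your write-up also supplies the details (in particular why $\id{p}\nmid A$) that the paper leaves as ``clearly''.
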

\begin{proof}
  By Lemma~\ref{lemma:notramification} we know that primes dividing
  $\Disc(\E)$ are not ramified in $K/\Q$; in particular, if
  $\id{q}$ is an odd prime dividing $\Disc(E_{(a,b,c)})$,
  $\id{q}\nmid 4a$, hence clearly the reduction of
  (\ref{eq:freycurve}) modulo $\id{q}$ is multiplicative.
\end{proof}

\begin{lemma}
Let $\id{q}$ be an odd prime ideal of $\Om_K$. Then $v_{\id{q}}(\Disc(\E)) \equiv 0 \pmod p$.
\label{lemma:loweringthelevel}
\end{lemma}
\begin{proof}
  From the discriminant formula, clearly
  $v_{\id{q}}(\Disc(\E)) = pv_{\id{q}}(c) +
  v_{\id{q}}(a^2+b\sqrt{-d})$.  If
  $\id{q} \mid \gcd(a^2+\sqrt{-d}b,a^2-\sqrt{-d}b)$ then
  $\id{q} \mid 2$ (because $(a,b,c)$ is primitive), hence the equality
  $c^p = a^4+db^2=(a^2+\sqrt{-d}b)(a^2-\sqrt{-d}b)$ implies that
\[
  v_{\id{q}}(a^2+\sqrt{-d}b) =\begin{cases}
    0 & \text{if } \id{q} \nmid (a^2+\sqrt{-d}b),\\
    v_{\id{q}}(c^p) & \text{ otherwise}.
\end{cases}
\]
\end{proof}

The last two results are the ones needed for removing the primes dividing $c$ from the conductor of the residual representation (via a theorem of Ribet).
Let $N(\E)$ denote the conductor of $\E$. Assume that $p\ge 11$ to avoid
extra computations when $2$ splits in $K$.
\begin{lemma}
	\label{lemma:2valuation}
	Let $\id{p}_2$ be a prime ideal of $\Om_K$ dividing $2$.
	\begin{enumerate}
		\item If $2$ is inert in $K$ then $\E$ has reduction type {\rm III} at 2, with $v_{2}(N(\E))=8$.
		\item If $2$ ramifies in $K$ then $\E$ has reduction type ${\rm I}_2^*$ or ${\rm I}_4^*$ at $\id{p}_2$, with 
		$v_{\id{p}_2}(N(\E)) \in \{10,12\}$.
		\item If $(2)=\id{p}_2\overline{\id{p}}_2$ then either $\E$ has reduction
		type {\rm III} at both primes, with
		$v_{\id{p}_2}(N(\E))=v_{\overline{\id{p}}_2}(N(E_{(a,b,c)}))=8$, or $\E$
		is a twist of a curve with multiplicative reduction, so the primes
		can be chosen so that $v_{\id{p}_2}(N(\E))=6$ and
		$v_{\overline{\id{p}}_2}(N(\E))\in\{1,4\}$.
	\end{enumerate}
\end{lemma}

\begin{proof} The proof is a straight application of Tate's algorithm
	(\cite{MR0393039}). The invariants of $\E$ are: $a_6=0$, $b_2=16a$,
	$b_6=0$ and $b_8=-4(a^2+\sqrt{-d}b)^2$. 
	\begin{enumerate}
		\item The hypothesis implies that $d\equiv3\pmod8$ and $2$ is prime
		in $\Om_K$.  Notice that, by Lemma~\ref{lemma:odd},
		$2\nmid a^2+\sqrt{-d}b$ hence $v_2(\Delta(E_{(a,b,c)}))=9$. Since:
		$2\mid b_2$, $2^2\mid a_6$, $2^3\nmid b_8$, the curve has
		reduction type \r{III} and
		$v_2(N(\E))=v_2(\Delta(\E))-1=8.$
		
		\item Let $\id{p}_2$ be the unique prime in $\Om_K$ dividing $2$ and
		let $\pi$ be a local uniformizer. By Lemma~\ref{lemma:odd},
		$\id{p}_2 \nmid(a^2+\sqrt{-d}b)$, hence
		$v_{\id{p}_2}(\Delta(\E))=18$. To easy notation, consider
		the curve
		\begin{equation}
			\label{eq:eccequation}
			y^2=x^3+4\alpha x^2+2\beta x,      
		\end{equation}
		where $\id{p}_2\nmid \beta$. Clearly $\id{p}_2 \mid b_2$,
		$\id{p}_2^2 \mid a_6$, $\id{p}_2^3 \mid b_8$ and $\id{p}_2^3\mid b_6$.
		Following Tate's notation, let $a_{n,m}=\frac{a_n}{\pi^m}$. The
		polynomial $P=x^3+a_{2,1}x^2+a_{4,2}x+a_{6,3}$ has a double root
		at $x=1$, hence we make the translation $x \to x+\pi$ in
		(\ref{eq:eccequation}), to get the new equation
		\begin{equation}
			\label{eq:secondequation}
			y^2=x^3+(4\alpha+3\pi)x^2+(8\pi \alpha+3\pi^2+2\beta)x +
			4\pi^2\alpha + \pi^3+2\beta\pi.
		\end{equation}
		Write $\tilde{a}_i$ for the new coefficients. If either $d$ is
		even (so we can take $\pi=\sqrt{-d}$) and $b$ is odd, or $d$ is
		odd (so $\pi = 1+\sqrt{-d}$) and $b$ is even (hence $a$ is odd),
		$v_{\id{p}_2}(\pi^2+2\beta) = 3$ hence $v_{\id{p}_2}(\tilde{a_6})=4$
		and the polynomial $Y^2+\tilde{a}_{3,2}Y -\tilde{a}_{6,4}$ has a
		double non-zero root. Although we need to make a translation (to
		take the double root to zero), such procedure will not change
		$\tilde{a}_{4,3}$, which has valuation $3$, so the type equals
		I$_2^*$ and
		$v_{\id{p}_2}(N(\E))=v_{\id{p}_2}(\Disc(\E))-6=12$.
		
		Suppose that $d$ is even and $b$ is even. If
		$\frac{d}{2}\equiv 1 \pmod 4$ then
		$v_{\id{p}_2}(\tilde{a}_{6}) \ge 6$ and
		$v_{\id{p}_2}(\tilde{a}_{4}) =4$, so we do not need to make any
		translation and the type is I$_4^*$ and
		$v_{\id{p}_2}(N(\E))=18-8=10$. If $\frac{d}{2}\equiv 3 \pmod 4$ then
		$v_{\id{p}_2}(\tilde{a}_{6})=4$ and $v_{\id{p}_2}(\tilde{a}_{4})\ge 5$
		hence the polynomial $Y^2+\tilde{a}_{3,2}Y-\tilde{a}_{6,4}$ has a
		non-zero double root, and after sending the root to $0$, we get
		that the new $a_4$ has valuation $4$, so the same computation
		works.
		
		At last, if $d \equiv 1 \pmod 4$ and $b$ is odd,
		$v_{\id{p}_2}(\tilde{a}_{6}) \ge 5$ and
		$v_{\id{p}_2}(\tilde{a}_{4}) =4$, hence again the type is I$_4^*$
		and $v_{\id{p}_2}(N(\E))=18-8=10$.
		
		\item Let $\id{p}_2$ be a prime dividing $2$. Consider the different
		cases:
		\begin{itemize}
			\item If either $a$ or $b$ is even (hence the other one is odd)
			then $v_{\id{p}_2}(a^2+\sqrt{-d}b)=0$ and
			$v_{\id{p}_2}(\Delta(\E))=9$ (for both primes). Clearly
			$v_{\id{p}_2}(b_2)\ge 4$ and $v_{\id{p}_2}(b_8)=2$ hence the
			reduction type is $\rm{III}$ and
			$v_{\id{p}_2}(N(\E))=9-1=8$ (at both primes).
			
			\item If both $a,b$ are odd, we can assume that
			$v_{\id{p}_2}(a^2+\sqrt{-d}b) >1$ while
			$v_{\overline{\id{p}}_2}(a^2+\sqrt{-d}b)=1$ (since
			$\frac{a^2+\sqrt{-d}b}{2}$ is an integer, and
			$v_{\overline{\id{p}}_2}(a^2+\sqrt{-d}b) = v_{\id{p}_2}(a^2-\sqrt{-d}b) =
			v_{\id{p}_2}(a^2+\sqrt{-d}b-2\sqrt{-d}b)$). Furthermore, our
			assumption $p\ge 11$ implies that
			$v_{\id{p}_2}(a^2+\sqrt{-d}b)\ge 11$, so
			$v_{\id{p}_2}(j(E_{(a,b,c)}))<0$. In particular $\E$ has
			potentially multiplicative reduction. The equation is not
			minimal at $\id{p}_2$; under a change of variables, it equals
			\[
			y^2 = x^3+ax^2+\frac{(a^2+\sqrt{-d}b)}{2^5}x,
			\]
			which already has multiplicative reduction. Hence its conductor
			equals $\id{p}_2$ or $\id{p}_2^4$. To compute the type at
			$\overline{\id{p}}_2$, the hypothesis also implies that
			$v_{\overline{\id{p}}_2}(j(\E))<0$ so the curve has
			potentially multiplicative reduction, but it equals a quadratic
			twist (by the character of conductor $8$) of a curve with
			multiplicative reduction, hence its conductor equals
			$\overline{\id{p}}_2^6$.
		\end{itemize}
	\end{enumerate}
\end{proof}

The following technical Lemma is needed only to compute the conductor
of the extended Galois representation when $d \equiv 1 \pmod 8$, so we
recommend the reader to skip it on a first reading.
\begin{lemma}
	Suppose that $d \equiv 1 \pmod 8$ and $b$ is odd. Let $\id{p}_2$ the prime dividing $2$,  $\pi = 1+\sqrt{-d}$ and $\epsilon$ be any unit. Then, at $\id{p}_2$,  the elliptic
	curve $\E$ twisted by $\epsilon \pi$ with equation
	\[
	\epsilon\pi y^2 = x^3 +4ax^2+ 2(a^2+\sqrt{-d}b)x,
	\]
	has conductor exponent $8$ if $b \equiv 1 \pmod 4$ and $6$ if
	$b \equiv 3 \pmod 4$.
	\label{lemma:d=1reduction}
\end{lemma}

\begin{proof}
	The proof also follows Tate's algorithm. The effect of twisting our
	elliptic curve in equation (\ref{eq:secondequation}) is that the
	coefficient $a_i$ becomes $a_i(\pi\varepsilon)^{-i}$, hence the
	discriminant valuation decreases by $6$ and the coefficients $a_i$
	decrease their valuation by $i$. Then if $b \equiv 1 \pmod 4$, the
	valuations are $v_{\id{p}_2}(\tilde{a}_2)=0$,
	$v_{\id{p}_2}(\tilde{a}_4)\ge 4$ while $v_{\id{p}_2}(\tilde{a}_6)=2$
	(because $\frac{\pi^2}{2} + b\sqrt{-d}$ is divisible by $\pi$ but
	not by $\pi^3$). Following Step 6 of Tate's algorithm, we make the
	change of variables $y \to y+x+\pi$ and the new even coefficients
	become $\tilde{a}_2-1$, $\tilde{a}_4-2\pi$ and
	$\tilde{a}_6 - \pi^2$, with valuations $2$, $3$ and $3$
	respectively, so the reduction type is I$_0^*$ and the conductor
	equals $18-6-4=8$.
	
	On the other hand, if $b \equiv 3 \pmod 4$ then
	$v_{\id{p}_2}(\tilde{a}_2)=0$, $v_{\id{p}_2}(\tilde{a}_4)=2$
	(because $2 \mid \frac{3\pi^2}{2}+b\sqrt{-d}$ but $\pi^3$ does not)
	while $v_{\id{p}_2}(\tilde{a}_6)\ge 4$. Following Tate's algorithm,
	apply the change of variables $y \to y+x$ to get coefficients
	$\alpha_1=2$, $\alpha_2=\tilde{a}_2-1$, $\alpha_3=0$,
	$\alpha_4=\tilde{a}_4$ and $\alpha_6=\tilde{a}_6$ with
	$v_{\id{p}_2}(\alpha_2)=2$, $v_{\id{p}_2}(\alpha_4)=2$ and
	$v_{\id{p}_2}(\alpha_6)\ge 4$. Since the reduction of the polynomial
	$t^3+\frac{\alpha_2}{\pi}t^2+\frac{\alpha_4}{\pi^2}t +
	\frac{\alpha_6}{\pi^3}$ has a double root and one simple one, the
	reduction type is I$_n^*$. Making the change of variables
	$x \to x+\pi$ the new $a_4$ equals $3\pi^2+\frac{2}{\pi^2}\beta+3$,
	which has valuation $3$ at $\id{p}_2$, so the type is I$_2^*$ and the
	conductor valuation equals $18-6-6=6$ as claimed.
\end{proof}

All primitive trivial solutions  correspond to very special curves.

%

\begin{lemma}
  The trivial solution $(0,0,0)$ gives rise to a singular curve. All
  other trivial primitive solutions of~(\ref{eq:24p}) are the
  following:
  \begin{itemize}
  \item The solution $(1,0,1)$, corresponding to the curve with lmfdb
    label \lmfdbec{256}{a}{2} and complex multiplication by
    $\Z[\sqrt{-2}]$.
		
  \item The solution $(-1,0,1)$, corresponding to the curve with lmfdb
    label \lmfdbec{256}{d}{2} and complex multiplication by
    $\Z[\sqrt{-2}]$.
		
  \item The solution $(0,\pm 1,1)$ (when $d=1$), corresponding to the
    curve with lmfdb label \lmfdbec{256}{c}{2} and complex
    multiplication by $\Z[\sqrt{-1}]$.
		
	\end{itemize}
\label{lemma:CM1}
\end{lemma}
\begin{proof} The solution $(0,0,0)$ clearly gives a singular
  curve. Any other trivial solution must have $b=0$ or $a=0$. In the
  first case, the primitive hypothesis implies that the solution
  equals $(\pm 1,0,1)$. If $a=0$, and $q$ is a prime dividing $d$, it
  must divide $c$ but since the solution is primitive, $q$ cannot
  divide $b$. This implies that no such prime can exist, hence and the
  later is a solution precisely $d=1$, with trivial solution $(0,\pm 1,1)$.
\end{proof}

\begin{remark}
  The conductor of the curve $E_{(\pm 1,0,1)}$ over $K$ has valuation
  $8$ at primes dividing $2$ when $2$ is unramified in $K/\Q$,
  valuation $10$ when $2 \mid d$ and valuation $12$ when $2$ ramifies in
  $K/\Q$ but $2 \nmid d$ (equivalently when $d \equiv 1 \pmod 4$).
  \label{rem:conductorCM}
\end{remark}


\subsection{Properties of $\Et$}
Recall once again the defining equation
\[
  \Et:y^2+6b\sqrt{-d}xy-4d(a+b^3\sqrt{-d})y = x^3.
 \]
 Its discriminant equals
 $\Delta(\Et)=-2^ 8 3^ 3 d^ 4c^p(a+b^ 3\sqrt{-d})^ 2$; note that the
 discriminant is divisible by $d$ (unlike the previous case). Its
 $j$-invariant is
 $j(\Et)=\frac{2^43^3b^3\sqrt{-d}(4a-5b^3\sqrt{-d})^3}{c^p(a+b^3\sqrt{-d})^2}$.

\begin{lemma}
  Let $\id{q}$ be a prime ideal of $\Om_K$ such that
  $\id{q} \mid \Delta(\Et)$ and $\id{q} \nmid 6d$. Then
  $\Et$ has multiplicative reduction at $\id{q}$.
\label{lemma:multredEt}
\end{lemma}
\begin{proof}
  Mimics the proof of Lemma~\ref{lemma:multred}.
\end{proof}

\begin{lemma}
  Let $\id{q}$ be a prime of $\Om_K$ such that $\id{q}\nmid6d$. Then
  $v_{\id{q}}(\Disc(\widetilde{E}_{(a,b,c)})) \equiv 0 \pmod p$.
  \label{lemma:bajadaniveleq2}
\end{lemma}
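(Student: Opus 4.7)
The plan is to mimic the proof of Lemma~\ref{lemma:loweringthelevel} in the setting of equation~\eqref{eq:ben-chen}, exploiting the factorization of $C^p$ in $\Om_K$. First I would write
\[
C^p = A^2 + dB^6 = (A+B^3\sqrt{-d})(A-B^3\sqrt{-d}),
\]
and observe that the contribution of $\id{q}$ to $v_{\id{q}}(\Disc(\widetilde{E}))$ comes entirely from the factor $C^p(A+B^3\sqrt{-d})^2$, since by hypothesis $\id{q}\nmid 2^8\cdot 3^3\cdot d^4$. Hence the statement reduces to checking that $v_{\id{q}}(A+B^3\sqrt{-d})\equiv 0\pmod p$ (using that $p$ is odd, so the square does not obstruct divisibility).

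Next I would show that $\id{q}$ cannot simultaneously divide both factors $A\pm B^3\sqrt{-d}$. If it did, then $\id{q}\mid 2A$ and $\id{q}\mid 2B^3\sqrt{-d}$; the hypothesis $\id{q}\nmid 2d$ then forces $\id{q}\mid A$ and $\id{q}\mid B$. Taking $q$ to be the rational prime below $\id{q}$, this gives $q\mid A$ and $q\mid B$, hence $q^2\mid A^2+dB^6=C^p$, so $q\mid C$, contradicting primitivity of $(A,B,C)$.

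With the two factors coprime at $\id{q}$, the computation finishes immediately: either $v_{\id{q}}(A+B^3\sqrt{-d})=0$, in which case there is nothing to prove, or else $v_{\id{q}}(A-B^3\sqrt{-d})=0$ and therefore
\[
v_{\id{q}}(A+B^3\sqrt{-d}) = v_{\id{q}}\!\bigl((A+B^3\sqrt{-d})(A-B^3\sqrt{-d})\bigr) = v_{\id{q}}(C^p) = p\cdot v_{\id{q}}(C),
\]
which is divisible by $p$. Combining this with $v_{\id{q}}(C^p)=p\cdot v_{\id{q}}(C)$ in the discriminant formula yields the conclusion.

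There is no serious obstacle here; the only subtlety is the harmless but necessary assumption $\id{q}\nmid 2d$ (which is part of $\id{q}\nmid 6d$) that is used to guarantee coprimality of the two factors in $\Om_K$. The role of the extra hypothesis $\id{q}\nmid 3$, compared to Lemma~\ref{lemma:loweringthelevel}, is simply to absorb the factor $3^3$ appearing in $\Disc(\widetilde{E})$, which plays no part in the valuation argument itself.
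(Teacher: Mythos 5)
Your proof is correct and takes the same approach the paper intends: the paper's own proof is the one-line ``Clear from the value of $\Disc(\widetilde{E})$'', which implicitly relies on exactly the coprimality-of-conjugate-factors argument from Lemma~\ref{lemma:loweringthelevel} that you spell out in full. Your write-up (including the verification that $\id{q}$ cannot divide both $A\pm B^3\sqrt{-d}$ and the use of $p$ being odd to handle the square) is a valid and complete version of what the paper leaves implicit.
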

\begin{proof}
Mimics the proof of Lemma~\ref{lemma:loweringthelevel}.
\end{proof}

One important difference between equation~(\ref{eq:24p}) and
equation~(\ref{eq:ben-chen}) is that we will not be able to remove
(via a lowering the level result) the ramified odd primes from the
conductor of the residual representation.

\begin{lemma}
  Suppose that $q$ is an odd rational prime ramified at $K/\Q$ and let
  $\id{q}$ denote the (unique) prime ideal in $\Om_K$ dividing $q$. Then
  $v_{\id{q}}(\Disc(\Et))=8+3v_{\id{q}}(3)$.
  \label{lemma:discvaluation}
\end{lemma}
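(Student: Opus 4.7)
The plan is to apply the explicit formula $\Disc(\widetilde{E}) = -2^8 3^3 d^4 C^p (A+B^3\sqrt{-d})^2$ given right after equation~\eqref{eq:FC-BC} and to evaluate $v_{\id{p}}$ of each factor separately. Since $p$ is odd and ramifies in $K/\Q$, we have $p\mid d$, and $d$ is assumed squarefree, so $\id{p}^2=(p)$ and $v_{\id{p}}(d)=2v_p(d)=2$. Hence $v_{\id{p}}(2^8)=0$, $v_{\id{p}}(3^3)=3v_{\id{p}}(3)$ and $v_{\id{p}}(d^4)=8$; these account for the $8+3v_{\id{p}}(3)$ that appears in the statement, and all that remains is to show $v_{\id{p}}(C^p)=0$ and $v_{\id{p}}(A+B^3\sqrt{-d})=0$.

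For this I would use the primitivity of $(A,B,C)$ together with the defining equation $A^2+dB^6=C^p$. First I show $p\nmid C$: if $p\mid C$ then $p\mid A^2+dB^6$, and since $p\mid d$ this forces $p\mid A$, so $v_p(A^2)\ge 2$; squarefreeness of $d$ then gives $v_p(dB^6)=1+6v_p(B)$, which is odd, so $v_p(A^2+dB^6)=1$ unless $p\mid B$, which would contradict $\gcd(A,B,C)=1$; but $v_p(C^p)\ge p\ge 2$, a contradiction. Hence $\id{p}\nmid C$, so $v_{\id{p}}(C^p)=0$. The same computation shows $\id{p}\nmid A$ (if $p\mid A$ then $p\mid dB^6$, forcing $p\mid B$ by the squarefreeness of $d$, contradicting primitivity once we know $p\nmid C$).

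Finally, since $v_{\id{p}}(\sqrt{-d})=1$ (because $(\sqrt{-d})^2=-d$ and $v_{\id{p}}(d)=2$), the term $B^3\sqrt{-d}$ has positive $\id{p}$-adic valuation, whereas $v_{\id{p}}(A)=0$. The strict triangle inequality gives $v_{\id{p}}(A+B^3\sqrt{-d})=0$, and therefore $v_{\id{p}}((A+B^3\sqrt{-d})^2)=0$. Summing the contributions yields $v_{\id{p}}(\Disc(\widetilde{E}))=0+3v_{\id{p}}(3)+8+0+0=8+3v_{\id{p}}(3)$, as claimed.

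The only delicate point is the coprimality argument for $\id{p}\nmid AC$; everything else is a direct valuation count. In particular the statement simplifies to $v_{\id{p}}(\Disc(\widetilde{E}))=8$ for all primes $p\neq 3$ ramifying in $K$, while for $p=3$ (when $3\mid d$) we get $v_{\id{p}}(\Disc(\widetilde{E}))=14$, reflecting the extra contribution from the factor $3^3$.
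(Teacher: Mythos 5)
Your proof is correct and follows essentially the same route as the paper's: read off $v_{\id{p}}$ of each factor of the explicit discriminant $-2^8 3^3 d^4 C^p(A+B^3\sqrt{-d})^2$, using $v_{\id{p}}(d)=2$, $\id{p}\nmid C$, and $\id{p}\nmid A$ (whence $v_{\id{p}}(A+B^3\sqrt{-d})=0$ by the ultrametric inequality). One minor wording slip: in your parenthetical justifying $\id{p}\nmid A$, the inference ``$p\mid dB^6$ forces $p\mid B$'' is not valid ($p\mid dB^6$ holds automatically since $p\mid d$); the correct one-liner is that $p\mid A$ together with $p\mid d$ gives $p\mid A^2+dB^6=C^p$, contradicting the $p\nmid C$ you have already established.
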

\begin{proof}
  Since $q$ is ramified, $\id{q} \mid \sqrt{-d}$, and since $(a,b,c)$
  is a primitive solution, $\id{q}\nmid a$. Then, using that
  $\id{q} \nmid c^p(a+b^3\sqrt{-d})$ and that $v_{\id{q}}(d)=2$ the
  result follows.
\end{proof}

\begin{remark}
  The curve $\Et$ has bad additive reduction at all odd primes
  different from $3$ ramifying in $K/\Q$. However, over the extension
  $K(\sqrt[6]{-d})$ it attains good reduction (via the usual change of
  coordinates $(x,y) \to (\sqrt[3]{(-d)^2}x,dy)$). If $q \mid d$ is
  such an odd prime, let $\id{q} = \langle q, \sqrt{-d}\rangle$ denote
  the ideal in $K$ dividing it. If $q \equiv 1 \pmod 3$, the extension
  $K_{\id{q}}(\sqrt[6]{-d})/K_{\id{q}}$ is an abelian extension, hence
  the local type of the Weil-Deligne representation at $\id{q}$ is
  that of a principal series (given by an order $3$ character), while
  if $q \equiv 2 \pmod 3$ the curve attains good reduction over a
  non-abelian extension, hence its local type matches that of a
  supercuspidal representation (obtained inducing an order $3$
  character from the quadratic unramified extension
  $K_{\id{q}}(\zeta_3)/K_{\id{q}}$).
	\label{rem:twistlevel}
\end{remark}
Let $N(\Et)$ denote the conductor of $\Et$ and suppose that $p>3$.

\begin{lemma}
  Let $\id{p}_2$ be a prime ideal of $\Om_K$ dividing $2$. Then:
  \begin{enumerate}
		
  \item If $2$ is inert in $K$ then $\Et$ has reduction type
    $\rm{IV}^*$ at $2$, with
    $v_{2}(N(\Et))=2$.
		
  \item If $2$ is split in $K$ then $\Et$ has reduction type $\rm{IV}^*$ or $\rm{I}_n$ at $\id{p}_2$, with
    $v_{\id{p}_2}(N(\Et))=1, 2$ at both primes above
    $2$.
  \item If $2$ ramified in $K$ but $2 \nmid d$ then
    $\Et$ has  reduction type $\rm{IV}$ at $\id{p}_2$,
    with $v_{\id{p}_2}(N(\Et))=2$.
		
  \item If $2 \mid d$ then $\Et$ has good reduction at $\id{p}_2$.
	\end{enumerate}
	\label{lemma:case2conductor2}
\end{lemma}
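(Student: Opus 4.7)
\medskip

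\noindent\emph{Proof plan.} The strategy is to run Tate's algorithm at each prime $\id{q}\mid 2$, using the explicit discriminant
\[
\Disc(\widetilde{E}) = -2^8\cdot 3^3 \cdot d^4 \cdot C^p\cdot (A+B^3\sqrt{-d})^2
\]
together with the parity information in Lemma~\ref{lemma:odd} and the factorization $C^p = A^2 + dB^6 = (A+B^3\sqrt{-d})(A-B^3\sqrt{-d})$. The latter immediately gives that at any prime $\id{q}$ above $2$, $v_{\id{q}}(A+B^3\sqrt{-d}) + v_{\id{q}}(A-B^3\sqrt{-d}) = v_{\id{q}}(C^p)$. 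When $d$ is odd, Lemma~\ref{lemma:odd} forces $A,B$ to have opposite parity, hence $C$ is odd and both conjugate factors are $\id{q}$-units. When $d$ is even, $A$ is odd and $A+B^3\sqrt{-d}$ is already an $\id{q}$-unit because $B^3\sqrt{-d}\in\id{q}$. This pins down $v_{\id{q}}(\Disc(\widetilde{E}))$ in every case.

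\smallskip

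\noindent\emph{Cases (1)--(3) (additive reduction).} In each of these cases $d$ is odd, $v_{\id{q}}(\sqrt{-d})=0$, and I would first check minimality. For $(1)$, $2$ is inert with $v_{\id{q}}(2)=1$, giving $v_{\id{q}}(\Disc)=8$, and since $v_{\id{q}}(a_3)=2<3$ the model is minimal. Reduction is a cusp, $v(b_2),v(a_6),v(b_8),v(b_6)$ fall in the ranges that push us past types $I_n$, II, III, IV, $I_0^*$, $I_n^*$, so we end at the polynomial $Y^2+\bar{a}_{3,2}Y-\bar{a}_{6,4}$ of Tate's algorithm with $\bar{a}_{6,4}=0$ and $\bar{a}_{3,2}=-\bar{d}\,\overline{A+B^3\sqrt{-d}}=1$; the polynomial $Y(Y+1)$ has two distinct roots over $\F_4$, so the type is $\rm{IV}^*$ and $v_{\id{q}}(N_{\widetilde{E}})=v_{\id{q}}(\Disc)-6=2$. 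For $(3)$, $v_{\id{q}}(2)=2$ gives $v_{\id{q}}(\Disc)=16$; here $v_{\id{q}}(a_1)\ge2,v_{\id{q}}(a_3)=4$ so we can scale by $u$ with $v_{\id{q}}(u)=1$ once, dropping $v_{\id{q}}(\Disc)$ to $4$ (which is then minimal), and the same analysis identifies type $\rm{IV}$ with conductor $4-2=2$. For $(2)$ (split, $d\equiv 7\pmod 8$), I would apply the same Tate analysis at both primes, distinguishing the subcases of which of $A,B$ is even; the polynomial giving the type fork is again $Y^2+\bar{a}_{3,2}Y-\bar{a}_{6,4}$, whose splitting behavior (and therefore the choice between multiplicative-type conductor $1$ and additive-type conductor $2$) depends on the specific parity and mod-$4$ data of $(A,B)$ at each of $\id{p}_2$ and $\bar{\id{p}}_2$.

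\smallskip

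\noindent\emph{Case (4) (good reduction).} The key observation is that, since $2\mid d$ is squarefree, $\sqrt{-d}$ is a local uniformizer at $\id{q}$: $v_{\id{q}}(\sqrt{-d})=1$ and $v_{\id{q}}(d)=2$. Using $A$ odd and $A+B^3\sqrt{-d}$ an $\id{q}$-unit, one computes
\[
v_{\id{q}}(a_1)\ge 3,\qquad v_{\id{q}}(a_3)=6,\qquad v_{\id{q}}(\Disc(\widetilde{E}))=16+8=24.
\]
The minimality test $v_{\id{q}}(a_i)\ge i$ for $i=1,\ldots,6$ together with $v_{\id{q}}(\Disc)\ge 12$ is satisfied, so we may apply the substitution $(x,y)\mapsto(u^2x,u^3y)$ with $u=\sqrt{-d}$. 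After one such substitution, $v_{\id{q}}(a_1)\ge2,v_{\id{q}}(a_3)=3$ and $v_{\id{q}}(\Disc)=12$, so the test still holds and the substitution can be applied a second time, yielding $v_{\id{q}}(a_1)\ge1,v_{\id{q}}(a_3)=0$ and $v_{\id{q}}(\Disc)=0$; hence the curve has good reduction at $\id{q}$.

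\smallskip

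\noindent\emph{Main difficulty.} The delicate point is case (4): the given model is very far from minimal at $\id{q}$, and it is not immediate that two successive substitutions by $\sqrt{-d}$ suffice. The proof works because $A$ being odd (Lemma~\ref{lemma:odd}) keeps $A+B^3\sqrt{-d}$ a unit throughout, which is exactly what lets the iterated minimization go through to $v_{\id{q}}(\Disc)=0$. A secondary technical nuisance, present in cases (1)--(3), is the residue characteristic $2$ of $\id{q}$: one must carry out Tate's algorithm over $\O_K/\id{q}$ (which is $\F_2$ or $\F_4$) and be careful that the polynomial $Y^2+\bar{a}_{3,2}Y-\bar{a}_{6,4}$ really does split into distinct roots, so that we land in $\rm{IV}^*$ (resp.\ $\rm{IV}$) rather than in some $I_n^*$ branch of the algorithm.
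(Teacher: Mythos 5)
Your treatment of cases (1), (3) and (4) is essentially the paper's own argument (Tate's algorithm on the given model, scaling by a uniformizer when the model is non-minimal; in case (4) the paper scales once by $2$ where you scale twice by $\sqrt{-d}$, which is the same thing since $-d/2$ is a unit at $\id{q}$), and those parts are correct up to minor imprecision about which step of Tate's algorithm produces type $\rm{IV}$ versus $\rm{IV}^*$.

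Case (2) has a genuine gap, and it starts already in your ``proof plan'': the claim that ``when $d$ is odd, Lemma~\ref{lemma:odd} forces $A,B$ to have opposite parity, hence $C$ is odd'' is false exactly in the split case. The prime $2$ splits precisely when $d\equiv 7\pmod 8$, which is the one residue class excluded from Lemma~\ref{lemma:odd}; there both $A$ and $B$ may be odd, and then $C^p=A^2+dB^6\equiv 1+7\equiv 0\pmod 8$, so $C$ is even. This is not a side remark: it is the entire source of the conductor-exponent-$1$ alternative in the statement. Moreover, the mechanism you propose for the fork between exponent $1$ and exponent $2$ --- the splitting behaviour of the step-8 quadratic $Y^2+\bar a_{3,2}Y-\bar a_{6,4}$ --- cannot produce exponent $1$: once you have reached step 8 of Tate's algorithm you have already established additive reduction on a minimal model, so the conductor exponent is at least $2$ (a double root there only sends you further down the algorithm, towards $\rm{III}^*$, $\rm{II}^*$ or non-minimality). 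Exponent $1$ means multiplicative reduction, which is decided at step 2 on the minimal model.

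The correct dichotomy (this is what the paper does) is: if exactly one of $A,B$ is even, then $A+B^3\sqrt{-d}$ is a unit at both primes above $2$, $v_{\id{q}}(a_3)=2$, the model is minimal, and one lands at step 8 with $Y(Y+\bar a_{3,2})$ separable, giving type $\rm{IV}^*$ and exponent $2$ at both primes. If both $A,B$ are odd, then $A+B^3\sqrt{-d}\equiv A+B^3\equiv 0$ modulo each prime above $2$, so $v_{\id{q}}(a_3)\ge 3$ while $v_{\id{q}}(a_1)=1$ and $v_{\id{q}}(\Disc(\widetilde{E}))\ge 12$; the model is therefore non-minimal, and after scaling by a uniformizer the new $a_1$ is a unit, hence so is $b_2$, and the minimal model has multiplicative reduction of type $\mathrm{I}_n$ with conductor exponent $1$ at both primes. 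You should redo case (2) along these lines; as written, your argument neither identifies when exponent $1$ occurs nor could it ever reach that conclusion.
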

\begin{proof} Consider each case separately:
  \begin{enumerate}
  \item If $2$ is inert then $2 \nmid c$, by Lemma~\ref{lemma:odd}. Clearly $2 \mid b_2$, $4 \mid a_6$ and $8 \mid b_8$, but since
    $2 \nmid (a+b^3\sqrt{-d})$, the polynomial
    $y^2+\frac{a_3}{4}y - a_6$ has distinct roots, so Step 8 of Tate's
    algorithm implies the reduction is of type $\rm{IV}^*$ and the
    conductor equals $v_2(\Disc(\Et))-6=2$.
		
  \item Suppose that $2$ splits and let $\id{p}_2$ be a prime dividing
    $2$. The primitive hypothesis implies that either one of $a$, $b$
    is even and the other is odd or both are odd. In the first case,
    $v_{\id{p}_2}(a_1)\ge 1$ and $v_{\id{p}_2}(a_3)=2$ hence we are again
    in Step 8 of Tate's algorithm (type $\rm{IV}^*$), therefore the
    conductor exponent is $2$. On the other hand, if both $a$ and $b$
    are odd, the model is not minimal, as $v_{\id{p}_2}(a_1) = 1$ and
    $v_{\id{p}}(a_3) \ge 3$; its minimal model has $\tilde{a_1}$ a
    unit (hence $\tilde{b_2}$ a unit) and the curve has type $\rm{I}_n$. In
    particular, its conductor exponent equals $1$.
		
  \item Suppose $2$ ramifies but $2 \nmid d$ and let $\pi$ be a local
    uniformizer. The hypothesis $(a,b,c)$ primitive implies that
    $v_{\pi}(a+b^3\sqrt{-d}) = 0$ (i.e. one of $a$ or $b$ is even but
    not both). The model is not minimal; the change of variables
    $y \to \pi^3y$, $x \to \pi^2 x$ gives a minimal model with
    valuations $v_{\pi}(\tilde{a_1})\ge 1$ and
    $v_{\pi}(\tilde{a_3})=1$. In particular, $v_{\pi}(\tilde{b_6})=2$
    so we are in Step 5 of Tate's algorithm which implies that the
    reduction has type $\rm{IV}$ and its conductor equals
    $v_{\pi}(\Delta(\Et))-2 = 2$.
  \item If $2 \mid d$ then $2 \nmid a$ (as the solution is primitive),
    so the change of variables $x \to 2^2x$, $y \to 2^3y$ gives a
    non-singular curve.
		
  \end{enumerate}
\end{proof}

At last, we need information on primes dividing $3$. 
\begin{lemma}
  Let $\id{p}_3$ be a prime ideal of $\Om_K$ dividing $3$.
  \begin{enumerate}
  \item If $3$ is inert in $K$ then $\Et$ has reduction type \rm{II} or \rm{III} at $3$, with
    $v_{3}(N(\Et))\in\{2,3\}$.
  \item If $3=\id{p}_3\overline{\id{p}}_3$ in $K$ then $\Et$ has reduction type \rm{II} or \rm{III} at $\id{p}_3$, with
    $v_{\id{p}_3}(N(\Et))=v_{\overline{\id{p}}_3}(N(\Et))\in\{2,3\}$, or the primes can be chosen so that
    $v_{\id{p}_3}(N(\Et))=2$ and
    $v_{\overline{\id{p}}_3}(N(\Et))=1$.
  \item If $3$ ramifies in $K$ then $\Et$ has reduction type $\rm{IV}^*$ at $\id{p}_3$, with
    $v_{\id{p}_3}(N(\Et))=8$.
  \end{enumerate}
  \label{lemma:case2conductor3}
\end{lemma}

\begin{proof}
  Let's consider the different cases:
  \begin{enumerate}
  \item If $3$ is inert, the primitive hypothesis implies that $c$ is
    not divisible by $3$ and $v_3(a_3)=0$ hence the singular point is
    not at the origin but it goes to the origin under the translation
    $(x,y) \to (x-a_3^6,y+a_3)$ (we are using that in the residue
    field raising to the eight power is the constant map). Let $a_1$
    and $a_3$ denote the corresponding coefficients of $\Et$ (to easy
    notation). Then the model becomes
    \begin{equation}
      \label{eq:3inerttypeIII}
      y^2+a_1xy + (3a_3-a_1a_3^6)y=x^3-3a_3^6x^2-a_1a_3x+(a_1a_3^7-a_3^{18}-2a_3^2).
    \end{equation}
		
    Let $\tilde{a_i}$ denote such coefficients. If $3 \mid b$ then
    $v_3(a_1)\ge 2$ so $v_3(\tilde{a_6})=1$ hence we are in Step 3 of
    Tate's algorithm, hence the curve has type $\rm{II}$ and the
    conductor exponent is $3$. If $3 \nmid b$, $v_3(a_1)=1$. If
    $9 \nmid a_1a_3^7-a_3^{18}-2a_3^2$ we are again in case $\rm{II}$
    (with exponent $3$).  Otherwise the following equality holds:
    \[
      \frac{a_1}{3} \equiv a_3^3\left(\frac{a_3^{16}+2}{3}\right)
      \pmod 3.
    \]
    The coefficient $\tilde{b_2}$ equals
    $-4a_1^2a_3^{18}+6a_1a_3^{13}+12a_3^{24}-3a_3^8$. Using the above
    equation a simple computation shows its valuation at $3$ equals
    $2$ hence the reduction type is $\rm{III}$ and the conductor
    exponent equals $2$.
		
  \item If $3$ splits in $K$, let $\id{p}_3$ be a prime dividing
    it. If $3\mid a$ then $3 \nmid b$ hence $v_{\id{p}_3}(a_1)=1$ and
    $v_{\id{p}_3}(a_3)=0$.  This situation matches the previous case
    and a similar computation proves that the type is $\rm{II}$ or
    $\rm{III}$ and the exponent valuation $3$ or $2$ at both $\id{p}_3$
    and $\overline{\id{p}}_3$.  If $3 \mid b$ then $3 \nmid a$, hence
    $v_{\id{p}_3}(a_1)\ge 2$ and $v_{\id{p}_3}(a_3)=0$; as in the
    previous case this corresponds to type $\rm{II}$ with conductor
    exponent $3$.
		
    Suppose then that $3 \nmid ab$. Then one of the primes (say
    $\id{p}_3$) divides $a+b^3\sqrt{-d}$ while the other does not.
    Since $c^p = (a+b^3\sqrt{-d})(a-b^3\sqrt{-d})$ the assumption
    $p\ge 5$ implies that (without loss of generality)
    $v_{\id{p}_3}(a+b^3\sqrt{-d})>3$ so $\id{p}_3$ divides the denominator
    of the $j$-invariant. Furthermore, the model is not minimal, and
    under the usual change of variables (sending
    $(x,y) \to (3^2x,3^3y)$ we get a curve with multiplicative
    reduction, hence the discriminant exponent equals $1$. At the
    prime $\overline{\id{p}}_3$ the curve is a quadratic twist (by the
    character of conductor $3$) of a curve with multiplicative
    reduction, hence the statement.
		
  \item If $3$ ramifies in $K$ then $3 \mid d$ and the primitive
    hypothesis implies that $3 \nmid a$. Let $\id{p}$ denote the prime
    ideal dividing $3$ in $K$. Then $v_{\id{p}_3}(a_1) \ge 2$ and
    $v_{\id{p}_3}(a_3) = 2$ hence we are in Step 8 of Tate's algorithm,
    the reduction type is $\rm{IV}^*$ and the conductor exponent
    equals $14-6=8$.
  \end{enumerate}
\end{proof}

\begin{remark}
	If $3$ is inert in $K/\Q$ and the curve has type $\rm{III}$
	reduction (the case of conductor valuation $2$), the change of
	variables $(x,y) \to (\sqrt[4]{3}x,\sqrt{3}y)$ in
	equation~(\ref{eq:3inerttypeIII}) gives a curve with good
	reduction. Since the fourth roots of unity are in $K_3$, the local
	type of the Weil-Deligne representation is that of a principal
	series (whose inertia is given by an order $4$ character).
	\label{rem:case3inert}
\end{remark}

\begin{lemma}
	Suppose that $q$ is a rational prime ramified at $K/\Q$ such that $q\nmid6$ and let	$\id{q}$ denote the (unique) prime ideal in $\Om_K$ dividing $q$. Then $\Et$ has reduction type $\rm{IV}^*$ at $\id{q}$ and
	$v_{\id{q}}(N(\tilde{E}_{(a,b,c)}))=2$.
	\label{lemma:conductorramifiedprimes}
\end{lemma}
\begin{proof}
  Since $(a,b,c)$ is a primitive solution, then $q \nmid a$ so $v_{\id{q}}(a_3)=2$ and
  $v_{\id{q}}(b_6)=4$. Also, $v_{\id{q}}(b_2)\ge 2$ which implies that
  we are in Step 8 of Tate's algorithm so the result follows from
  Lemma~\ref{lemma:discvaluation}.
\end{proof}

Once again, all trivial solutions correspond to special curves.
\begin{lemma}
  The trivial solution $(0,0,0)$ gives rise to a singular curve. All
  other trivial primitive solutions of~(\ref{eq:ben-chen}) are the
  following:
  \begin{itemize}
  \item The solution $(0,\pm1,1)$ (when $d=1$), corresponding to the
    elliptic curve with lmfdb label \lmfdbecnf{2.0.4.1}{324.1}{a}{3}
    and complex multiplication by $\Z[\sqrt{-1}]$.
	 
  \item The solution $(\pm1,0,1)$, corresponding to a cubic twists by
    $\sqrt[3]{d}$ of the elliptic curve with lmfdb
    label~\lmfdbec{108}{a}{2}. Such a twist has complex multiplication
    by $\Z\left[\frac{1+\sqrt{-3}}{2}\right]$.
\end{itemize}
\end{lemma}
 \begin{proof}
   The solution $(0,0,0)$ clearly gives a singular curve. The other
   trivial primitive solutions must be of the form $(\pm 1,0,1)$ or
   $(0,\pm 1,1)$. The later case can only occur when $d=1$ (since $d$
   is assumed to be square-free) giving the first equation. The
   solution $(\pm 1,0,1)$ corresponds to the elliptic curve
   \[
     \tilde{E}_{(\pm 1,0,1)}: y^2 \pm 4dy=x^3.
   \]
   When $d=1$, it corresponds to the elliptic curve with lmfdb label
   \lmfdbec{108}{a}{2}, which has complex multiplication by
   $\Z\left[\frac{1+\sqrt{-3}}{2}\right]$. The fact that the cubic
   roots of unity act on the curve allows to define cubic twists of
   the curve, and the general case is precisely the cubic twist by
   $\sqrt[3]{d}$ of the case $d=1$.
\end{proof}

\begin{remark}
  The trivial solution $(\pm 1,0,1)$ corresponds to an
  elliptic curve over $K$ with complex multiplication whose conductor has
  valuation:
  \begin{itemize}
  \item $2$ for all odd primes $\id{q}$ dividing $d$ but not $3$,
    
  \item $2$ if $\id{p}_2\mid 2$ and $2 \nmid d$,
    
  \item $0$ if $\id{p}_2\mid 2$ and $2 \mid d$,
    
  \item $3$ if $\id{p}_3\mid 3$ and $3 \nmid d$,
    
  \item $8$ if $\id{p}_3\mid 3$ and $3 \mid d$.
  \end{itemize}
\label{rem:conductorCM2}
\end{remark}

\section{Construction of the Hecke character}
\label{section:character}
  For a number field $L$, let $\II_L$ denote its id\`ele group and
  $\Cl(L)$ denote its class group.  Class field theory relates finite
  characters of $\Gal_L$ with finite characters of the id\`ele group
  $\II_L$. We will make constant use of this relation, and will denote
  by the same letter both incarnations of the same object (and hope
  there is no confusion on doing that).

  Let $\tau \in \Gal_\Q$ and $\chi: \II_L \to \CC^\times$ be a finite
  order Hecke character. Denote by $^\tau\chi$ the Hecke character
  given on an element $\alpha \in \II_L$ by
\[
^\tau\chi(\alpha) = \chi(\tau(\alpha)).
\]
Via class field  theory, the character $^\tau\chi$ corresponds to
the character on $\Gal_L$ given by 
$^\tau\chi(\sigma) = \chi(\tau \sigma \tau^{-1})$. In general, if
$\rho:\Gal_L \to \GL_n(\overline{\Q_p})$ is a Galois
representation, and $\tau \in \Gal_\Q$, we denote by $^\tau\rho$ the
Galois representation whose value at $\sigma \in \Gal_L$ equals
$^\tau\rho(\sigma) = \rho(\tau \sigma \tau^{-1})$.

For $K$ a quadratic extension of $\Q$, a representation
$\rho:\Gal_K \to \GL_2(\overline{\Q_p})$ extends to a $2$-dimensional
representation of $\Gal_\Q$ if and only if $^\tau\rho \simeq \rho$, where
$\tau \in \Gal_\Q$ is any element whose restriction to $\Gal_K$ is not
the identity (although this result is well known for experts, a proof
will be given in Theorem~\ref{thm:levelandnebentypus}).

Let $t$ be an integer, and let $\psi_t$ denote the character of
$\Gal_\Q$ corresponding to the quadratic extension
$\Q(\sqrt{t})/\Q$. Proposition~\ref{prop:qcurveE} implies that for any
prime $p$, $^\tau\rho_{\E,p}$ is isomorphic to
$\rho_{\E,p}\otimes \psi_{-2}$, while Proposition~\ref{prop:qcurveE2}
implies that for any prime $p$, $^\tau\rho_{\Et,p}$ is isomorphic to
$\rho_{\Et,p}\otimes \psi_{-3}$.  If we construct a Hecke character
$\chi_t:\Gal_K\to \overline{\Q}^\times$ satisfying that
$^\tau\chi_t = \chi_t \cdot \psi_{-t}$ (as characters of $\Gal_K$)
then the twisted representations $\rho_{\E,p} \otimes \chi_2$
(respectively $\rho_{\Et,p}\otimes \chi_3$) is isomorphic to
$^\tau(\rho_{\E,p} \otimes \chi_2)$ (respectively to
$^\tau(\rho_{\Et,p}\otimes \chi_3)$) and so as explained in the
introduction (see also Theorems \ref{thm:levelandnebentypus} and
\ref{thm:levelandnebentypus2}) it does extend to a two dimensional
representations of $\Gal_\Q$. Furthermore, an explicit description of
$\chi$ and of its conductor allows  to give a formula for the level
and Nebentypus of the extended rational representation. This leads to
the following problem.

\medskip

\noindent{\bf Problem 1:} Let $\psi_t$ be the quadratic character of
$\Gal_\Q$ corresponding to the extension $\Q(\sqrt{t})/\Q$. Find a
Hecke character $\chi_t$ of $\Gal_K$ such that
$^\tau\chi_t = \chi_t \cdot \psi_{-t}$.

\medskip

Our main result is to give a solution the previous problem for $t=2$ (corresponding to
equation~(\ref{eq:24p})) and for $t$ a prime number congruent to $3$
modulo $4$ (corresponding to equation~(\ref{eq:ben-chen})). To explain how our construction works, consider the natural short exact sequence
\begin{equation}
  \label{eq:ideleSES}
  \xymatrix{
    0 \ar[r] & L^\times \cdot(\prod_{\id{q}}\Om_{\id{q}}^\times \times (L \otimes \RR)^\times) \ar[r]  & \II_L \ar[r]^{\Id} & \Cl(L) \ar[r] & 0.
  }
\end{equation}
We start defining the Hecke character $\chi_t$ at elements of the
first term of the short exact sequence and then extend it to elements
of $\II_L$ that map via $\Id$ to representatives of the class
group. Let us make one important remark, as the picture might be a
little misleading: our character $\chi_t$ will not be trivial at units
(the first term of the sequence), hence it will not be a character of
the whole class group!  (it will be a character of a suitable ray
class group though).

Recall that Hecke characters are trivial at elements of
$L^\times$, hence we are only left to define our character on local units (which
determines the ramification behavior of the abelian extension cut out
by the kernel of the character $\chi_t$).
Note that
$(\prod_{\id{q}}\Om_{\id{q}}^\times \times (L \otimes \RR)^\times)
\cap L^\times = \Om_L^\times$, hence there is a compatibility
condition that always needs to be checked:

\medskip
\noindent {\bf Compatibility condition:} the product of the local
components evaluated at a unit equals $1$, i.e.
\begin{equation}
	\label{eq:existencehechechar}
	\prod_{v}\chi_{t,v}(\epsilon)=1  
      \end{equation}
for all $\epsilon \in \Om_L^\times$. Verifying this property is what makes construction of Hecke characters quite hard in general.
\medskip

In our case, the field $L$ is the imaginary quadratic field $K$, so
its set of units is well known. Our character $\chi_t$ will be ramified at
most at the odd primes ramifying in $K/\Q$ (with conductor exponent
one), at primes dividing $2$ and at primes dividing $t$.

Let $\norm:\II_K \to \II_\Q$ be the norm map. The way we verify the
compatibility condition of our defined character (and other properties
it satisfies) is via the construction of an auxiliary rational Hecke
character $\varepsilon_t$ (that will end up being the Nebentypus of
the extended representation) unramified outside $2td$ with the
following key properties relating $\varepsilon_t$ to $\chi_t$:
\begin{enumerate}
\item The local character $\chi_{t,\id{p}}$ satisfies that
  $^\tau\chi_{t,\id{p}}=\chi_{t,\id{p}} \cdot ((\psi_{-t})_p\circ
  \norm)$.
\item Let $p$ be an odd prime ramified in $K/\Q$, and let $\id{p}$ the unique
  prime of $\Om_K$ dividing it. Then under the field isomorphism
  $\Om_K/\id{p} \to \Z/p$, the local character $\chi_{t,\id{p}}$ equals
  $\varepsilon_{t,p} \delta_p$, where $\delta_p$ is the quadratic
  character of $(\Z/p)^\times$.
\item An extra condition at primes dividing $2$ so that a
  compatibility conditions on units holds (a condition similar to
  (\ref{eq:existencehechechar}) for the units of $K$).
\item For all $\sigma \in \Gal_K$,
  $\chi_t(\sigma)^2 = \varepsilon_t(\sigma)$ (or in terms of id\`eles,
  $\chi_t^2 = \varepsilon_t \circ \norm$).
\end{enumerate}

Let us explain the role of each property we imposed.  The first
condition is needed for $\chi_t$ to locally solve the problem. The
second condition will play a crucial role while proving the
compatibility condition (via quadratic reciprocity). It is a local
version of the last one.

\begin{lemma} Let $p$ be a rational prime and $\id{p}$ a prime of $\Om_K$
  dividing it. Then the second condition implies that the fourth condition
  holds locally, namely
  $\chi_{t,\id{p}}=\varepsilon_{t,p} \circ \norm$.
\label{lemma:charnorm}
\end{lemma}
\begin{proof}
  For primes $\id{p} \nmid 2td$, both characters are trivial, hence
  the statement trivially holds.  For odd primes $\id{p}$ (of norm $p$) such that
  $\id{p} \nmid t$ and $\id{p} \mid d$, recall that the
  restriction of $\varepsilon_{t,p}$ to $\Gal_{K_{\id{p}}}$ equals (as
  Hecke characters) $\varepsilon_{t,p} \circ \norm$, where
  $\norm:K_{\id{p}} \to \Q_p$ is the norm map. Since $p$ ramifies in
  $K/\Q$ the local norm map (modulo $\id{p}$) is given by $x \mapsto x^2$,
  so we get the equality
\[
\chi_{t,\id{p}}^2(x) = \varepsilon_{t,p}^2(x) = \varepsilon_{t,p}(x^2)=\varepsilon_{t,p} \circ \norm(x).
\]
\end{proof}
The third condition is needed to prove the compatibility
condition. The first three conditions will be enough to define the
character $\chi_t$ at elements of the first term of
(\ref{eq:ideleSES}). The last condition will be used to extend the
character to id\`eles that are representatives of the class group of
$K$.

The general strategy to prove existence of the characters
$\varepsilon_t$ and $\chi_t$ with the above properties, is to split
the set of odd primes $\{p : p \text{ ramifies in }K/\Q\}$ into four
sets. More concretely, for $t=2$ they will be divided depending on
their congruence modulo $8$, while for $t$ an odd prime, they will be
divided depending on whether $p$ is a square modulo $t$ or not and on
whether $p$ is a square modulo $4$ or not. Then for primes $p$ in each
set we give an explicit definition of the local character
$\varepsilon_{t,p}$ and $\chi_{t,\id{p}}$ satisfying the previous four
properties. The description (and proof) depends on whether $t=2$ or
$t$ is an odd prime congruent to $3$ modulo $4$, so each case will be
considered in a different section. To easy notation, in each section
the subscript $t$ will be removed.

\subsection{The case $t=2$} 
\label{section:t=2}
Let $K=\Q(\sqrt{-d})$ with $d$ a positive square-free integer. Split
the odd prime divisors of $d$ into four different sets, namely:
\[
Q_i =\{ p \text{ prime}\; : \; p \mid d, \quad p \equiv i \pmod 8\},
\]
for $i=1, 3, 5, 7$.

\noindent{\bf The character $\varepsilon$:} Define an even character
$\varepsilon:\II_\Q \to \CC^\times$ ramified at the set of primes of
$Q_3 \cup Q_5$ and sometimes in $\{2\}$, with local component
$\varepsilon_p$ given by:
\begin{itemize}
\item For primes $p \in Q_1 \cup Q_7$, the character
  $\varepsilon_p:\ZZ_p^\times \to \CC^\times$ is trivial.
	
\item For primes $p \in Q_3$, the character
  $\varepsilon_p = \delta_p$, the quadratic character defined by
  $\delta_p(n) = \kro{n}{p}$.
	
\item For $p \in Q_5$, let $\varepsilon_p$ be a character of order $4$
  and conductor
  $p$.
\item The character $\varepsilon_\infty$ (the archimidean component)
  is trivial.
\end{itemize}

Before defining the character at the prime $2$, let us introduce some
notation. Let $\psi_{-1}$, $\psi_{2}$, $\psi_{-2}$ be the characters
of $\ZZ$ corresponding to the quadratic extensions $\QQ(\sqrt{-1})$,
$\QQ(\sqrt{2})$ and $\QQ(\sqrt{-2})$ respectively and let
$\delta_{-1}$, $\delta_{2}$, $\delta_{-2}$ be their local component at
the prime $2$ (see Table~\ref{table:characters} for their values).
\begin{table}[h]
	\begin{tabular}{|l|c|c|c|c|}
		\hline
		Char & $1$ & $3$ & $5$ & $7$\\
		\hline\hline
		$\delta_{-1}$ & $1$ & $-1$ & $1$& $-1$\\
		\hline
		$\delta_{-2}$ & $1$ & $1$ & $-1$& $-1$\\
		\hline
		$\delta_{2}$ & $1$ & $-1$ & $-1$& $1$\\
		\hline
	\end{tabular}
	\caption{\label{table:characters}}
\end{table}
\begin{itemize}
\item Define $\varepsilon_2=\delta_{-1}^{\#Q_3+\#Q_5}$.
\end{itemize}

By construction, the character $\varepsilon$ satisfies the compatibility condition,
namely
\[
\prod_p\varepsilon_p(-1)\varepsilon_{\infty}(-1) = \prod_{p \in Q_3\cup Q_5}\varepsilon_p(-1) \varepsilon_2(-1) = (-1)^{\#Q_3+\#Q_5}\varepsilon_2(-1)=1.
\]

This gives a well defined Hecke
character $\varepsilon$ of $\II_\QQ$ corresponding to a totally real
field $L$ whose degree equals $1$ if $Q_3=Q_5=\emptyset$, $2$ if $Q_3\neq Q_5 = \emptyset$ and $4$
otherwise. By class field theory, $\varepsilon$ gets identified with a
character $\varepsilon:G_\Q \to \overline{\Q}$.  Let
$N_\varepsilon$ denote its conductor, given by $N_\varepsilon= 2^e\prod_{p \in Q_3 \cup Q_5}p$, where $e=0$ if $\#Q_3+\#Q_5$ even and $2$ otherwise.
\begin{remark}\label{remark}
  The possible values for $\#Q_3$, $\#Q_5$ and $\#Q_7$ depending on
  the congruence of $d$ modulo $8$ are given on
  Table~\ref{table:parity}. Note in particular that when $d$ is odd,
  the definition of $\varepsilon_2$ depends only on the value of
  $d \pmod 8$ and not on the parity of $\#Q_3$ and $\#Q_5$.
  \begin{table}[h]
    \begin{tabular}{|l|c|c|c||l|c|c|c|}
      \hline
      $d$ & $\#Q_3$ & $\#Q_5$ & $\#Q_7$ &$d$ & $\#Q_3$ & $\#Q_5$ & $\#Q_7$ \\
      \hline\hline
      $1$ & $0$ & $0$ & $0$ &$5$ & $0$ & $1$ & $0$\\
          & $1$ & $1$ & $1$ &  & $1$ & $0$ & $1$\\
      \hline
      $3$ & $0$ & $1$ & $1$ &$7$ & $0$& $0$ & $1$\\
          & $1$ & $0$ & $0$ &    & $1$ & $1$ & $0$\\
      \hline
      $2$ & $0$ & $0$ & $0$ & $6$ & $0$& $0$ & $1$\\
          & $0$ & $1$ & $0$ & & $0$ & $1$ & $1$\\
          & $1$ & $0$ & $1$ & & $1$ & $0$ & $0$\\
          & $1$ & $1$ & $1$ & & $1$ & $1$ & $0$ \\
      \hline
    \end{tabular}
    \caption{\label{table:parity}}
	\end{table}
\end{remark}

\begin{thm}
  There exists a Hecke character $\chi:\Gal_K \to \overline{\QQ}$ such
  that:
  \begin{enumerate}
  \item $\chi^2(\sigma) = \varepsilon(\sigma)$ for all $\sigma \in \Gal_K$,
		
  \item $\chi$ is unramified at primes not dividing
    $2\prod_{p \in Q_1 \cup Q_5 \cup Q_7}p$,
		
  \item If $\tau \in \Gal_\Q$ is not the identity on $K$, 
    $^\tau\chi = \chi \cdot \psi_{-2}$ as characters of $\Gal_K$.
  \end{enumerate}
  \label{thm:charexistence}
\end{thm}

\begin{proof} Recall that for each ramified prime $p$ in $K$ there is
  a natural group isomorphism
  $(\Om_{\id{p}}/\id{p})^\times \simeq (\ZZ/p)^\times$ (where
  $\Om_{\id{p}}$ denotes the completion of $\Om_K$ at $\id{p}$). In
  particular, we will denote by $\delta_p$ or $\varepsilon_p$ the same
  character of $(\Om_{\id{p}}/\id{p})^\times$. Following the strategy
  described above,
  define $\chi_\id{p}:\Om_{\id{p}}^\times \to \CC^\times$  by:
	
\begin{itemize}
\item If $\id{p}$ is an odd (i.e. $\id{p}\nmid 2$) unramified prime,
  $\chi_{\id{p}}$ is the trivial character. 		
\item If $\id{p}$ is an odd ramified prime,
  \begin{equation}
    \label{eq:ramprimedef}
    \chi_{\id{p}}= \varepsilon_p \delta_p.
    \end{equation}
  \item The archimidean component of $\chi$ is trivial.
\end{itemize}
Its local definition at places dividing $2$ is more involved. Suppose
that $2$ does not split in $K$, and let $\id{p}_2$ denote the unique
ideal dividing $2$. The character $\chi_{\id{p}_2}$ has conductor
dividing $2^3$; the group structure of $(\Om_{\id{p}_2}/2^n)^\times$
and its generators are given in
Table~\ref{table:quotientstructure}. The generators are ordered so
that the order of the generator $i$ matches the $i$-th factor of the
group structure, while the elements norms are modulo $8$.
\begin{table}[h]
  \begin{tabular}{|c|c|c|c|c|}
    \hline
    $d$ & $n$ & Structure & Generators & Norms \\
    \hline\hline
    $1$ & $3$ & $\Z/4 \times \Z/4 \times \Z/2$& $\{\sqrt{-d}, 1+2\sqrt{-d},5\}$ & $\{1,5,1\}$ \\
    \hline
    $3$ & $3$ & $\F_3 \times \Z/4 \times \Z/2 \times \Z/2$ &$\{\zeta_3,\sqrt{-d},3+2\sqrt{-d},-1\}$ & $\{1,3,5,1\}$\\
    \hline
    $5$ & $3$ & $\Z/4 \times \Z/4 \times \Z/2$ &$\{\sqrt{-d},1+2\sqrt{-d},-1\}$ & $\{5,5,1\}$\\
    \hline
    even & $2$& $\Z/4 \times \Z/2 $ & $\{1+\sqrt{-d},-1\}$ & $\{3,1\}$ \\
    \hline  
  \end{tabular}
  \caption{\label{table:quotientstructure}}
\end{table}
Define~$\chi_{\id{p}_2}$ on the set of generators as follows:
\begin{itemize}
  \item If $d \equiv 1 \pmod{16}$, $\chi_{\id{p}_2}(\sqrt{-d})=1$,
  $\chi_{\id{p}_2}(1+2\sqrt{-d})=1$, $\chi_{\id{p}_2}(5)=-1$. 

  \item If $d \equiv 9 \pmod{16}$, $\chi_{\id{p}_2}(\sqrt{-d})=-1$,
  $\chi_{\id{p}_2}(1+2\sqrt{-d})=1$, $\chi_{\id{p}_2}(5)=-1$. 

\item If $d \equiv 3 \pmod 8$, $\chi_{\id{p}_2}(\zeta_3)=1$,
    $\chi_{\id{p}_2}(\sqrt{-d})=i$, $\chi_{\id{p}_2}(3+2\sqrt{-d})=1$,
    $\chi_{\id{p}_2}(-1)=1$. 
    
  \item If $d \equiv 5 \pmod {16}$, $\chi_{\id{p}_2}(\sqrt{-d})=1$, $\chi_{\id{p}_2}(1+2\sqrt{-d})=1$, $\chi_{\id{p}_2}(-1)=-1$.
  \item If $d \equiv 13 \pmod {16}$, $\chi_{\id{p}_2}(\sqrt{-d})=-1$, $\chi_{\id{p}_2}(1+2\sqrt{-d})=1$, $\chi_{\id{p}_2}(-1)=-1$.
	
\item If $d \equiv 2 \pmod 8$ and $\#Q_3+\#Q_5$ is even,
  $\chi_{\id{p}_2}(1+\sqrt{-d})=1$,
  $\chi_{\id{p}_2}(-1)=1$.
		
\item If $d \equiv 2 \pmod 8$ and $\#Q_3+\#Q_5$ is odd,
  $\chi_{\id{p}_2}(1+\sqrt{-d})=i$,
  $\chi_{\id{p}_2}(-1)=-1$.
		
\item If $d \equiv 6 \pmod 8$ and $\#Q_3+\#Q_5$ is even,
  $\chi_{\id{p}_2}(1+\sqrt{-d})=1$,
  $\chi_{\id{p}_2}(-1)=-1$.
		
\item If $d \equiv 6 \pmod 8$ and $\#Q_3+\#Q_5$ is odd,
  $\chi_{\id{p}_2}(1+\sqrt{-d})=i$,
  $\chi_{\id{p}_2}(-1)=1$.
\end{itemize}
At last,
\begin{itemize} 
\item If $d \equiv 7 \pmod 8$, the prime $2$ splits as
  $2 = \id{p}_2 \overline{\id{p}_2}$. Let
  $\chi_{\id{p}_2}:=\delta_{-2}$ and $\chi_{\overline{\id{p}_2}}:=1$
  (trivial) or take $\chi_{\id{p}_2}:=\delta_{2}$ and
  $\chi_{\overline{\id{p}_2}}:=\delta_{-1}$. 
\end{itemize}
To make the proofs  consistent, we denote  $\chi_2 = \prod_{\id{p}_2\mid2}\chi_{\id{p}_2}$.

Define $\chi$ on
$K^\times \cdot (\prod_{\id{q}}\Om_{\id{q}}^\times \times \CC^\times)$
to be trivial at elements of $K^\times$ and as the product of the
local components at elements of the second factor. Let us start proving
that even when we do not know that that our character $\chi$
satisfies the compatibility condition, nor have extended it to the whole
id\`ele group, it satisfies the three properties of the Theorem at
elements of
$K^\times \cdot (\prod_{\id{q}}\Om_{\id{q}}^\times \times
\CC^\times)$.

\begin{enumerate}
\item We need to verify that the equality
\[
\chi^2 = \varepsilon \circ \norm
  \]
  holds for all elements of
  $K^\times \cdot (\prod_{\id{q}}\Om_{\id{q}}^\times \times
  \CC^\times)$. The statement is clear for elements of $K^\times$ (as
  both terms are trivial) and at $\CC^\times$ for the same reason, so
  we are left to verify it for each local component. The proof for odd
  prime ideals $\id{p}$ is the following: if $\id{p}$ is unramified in
  $K/\Q$, then the result is clear (as all characters are trivial),
  while at ramified primes, it follows from (\ref{eq:ramprimedef})
  together with Lemma~\ref{lemma:charnorm}.

  At last, it is easy to verify that
  $\chi_2^2 = \varepsilon_2 \circ \norm$ using the character values
  in Table~\ref{table:characters}, the parity of
  Table~\ref{table:parity} and the norm of the generators given in
  Table~\ref{table:quotientstructure}.
\item From the definition of $\varepsilon$ and $\chi$ it is clear that
  the ramification hypothesis is fulfilled.
  
\item From its definition it is clear that for all odd primes
  $^\tau\chi_{\id{p}} = \chi_{\id{p}}$, so the third condition is also
  locally fulfilled. The reason it holds for primes $\id{p}_2$
  dividing $2$ is that
  $^\tau\chi_{\id{p}_2} \cdot \chi_{\id{p}_2} = \chi_{\id{p}_2} \circ
  \norm$, hence
  $^\tau\chi_{\id{p}_2} = \chi_{\id{p}_2}^{-1} \cdot( \chi_{\id{p}_2}
  \circ \norm)$. An easy case by case computation on the generators
  shows that
  $^\tau\chi_{\id{p}_2} = \chi_{\id{p}_2} \cdot (\delta_{-2}\circ
  \norm)$.
\end{enumerate}

An important property of our character at $2$ is that its restriction
to the $2$-adic integers always satisfies
\begin{equation}
  \label{chi2}
  \chi_2|_{\Z_2^\times}=\delta_{2}^{v_2(d)+1}\delta_{-1}^{\#Q_5+\#Q_7}.
\end{equation}
	
\noindent{\bf Compatibility:} the subgroup of units in $K$ is
generated by roots of unity of order $2$, $6$ and $4$ (for
$\Q(\sqrt{-1})$). Since our characters have order a power of $2$, the
compatibility relation at roots of order $3$ (if $K$ has one) is
trivial. If $K=\Q(\sqrt{-1})$, all sets $Q_i$ for $i=1,3,5,7$ are empty
and the compatibility at $\sqrt{-1}$ follows from the fact that
$\chi_2(\sqrt{-1})=1$ in such case.

Let us make the following abuse of notation: for $\id{p}$ a prime
ideal of $\Om_K$ let us denote by $\id{p} \in Q_i$ the fact that 
$\id{p}\cap \Z$ is in such set. Then compatibility at $-1$ follows from
\begin{equation}
  \label{eq:compat1}
  \chi(-1) = \prod_{\id{p}}\chi_{\id{p}}(-1) = \prod_{\id{p} \in Q_1
    \cup Q_5 \cup Q_7}\chi_{\id{p}}(-1) \chi_2(-1)=(-1)^{\#Q_5+\#Q_7}\delta_{-1}(-1)^{\#Q_5+\#Q_7}=1.
\end{equation}
	
\medskip
	
\noindent {\bf Extension:} As explained before, to extend $\chi$ to
the whole id\`ele group $\II_K$, it is enough to define it on id\`eles
whose image via $\Id$ (in (\ref{eq:ideleSES})) generate the class
group of $K$.  For that purpose, let $\{\id{r}_1,\cdots,\id{r}_h\}$ be
prime ideals of $K$ whose class generate $\Cl(K)$ (we can and do
assume they are not ramified in $K/\Q$). Since $\id{r}_i$ is not
principal, it must split in $K/\Q$, so if $r_i = \norm(\id{r}_i)$, the
element $a_i$ in $\II_K$ with trivial infinite component and finite
components:
\[
  (a_i)_\id{p}=
  \begin{cases}
    r_i & \text{ if }\id{p} = \id{r}_i,\\
    1 & \text{ otherwise}.
  \end{cases}
\]
is a preimage of $\id{r}_i$ under $\Id$.  The value $\chi(a_i)$ cannot
be arbitrary. For example, suppose that $\id{r}_i$ has odd order in
the class group, so there exists an ideal $\id{t}$ such that
$\id{t}^2$ lies in the same class as $\id{r}_i$. In particular, if
$b_i$ denotes an id\`ele in the preimage of $\id{t}$ by $\Id$, there
must exist $\alpha \in K^\times$,
$u \in \prod_{\id{q}}\Om_{\id{q}}^\times \times \CC^\times$ such that
$a_i = \alpha u b_i^2$. Since we want $\chi$ to be a character, it
must hold that
\begin{equation}
  \label{eq:charextodd}
  \chi(a_i) = \chi(u) \chi(b_i^2) = \chi(u) \chi(b_i)^2=\chi(u) \varepsilon(\norm(b_i)),  
\end{equation}
so there is a unique possible value for $\chi(a_i)$.

Since $\Cl(K)$ is a finite abelian group, and $\chi$ is
multiplicative, we only need to understand how to define $\chi$ for
ideal classes $\id{r}_i$ whose order is a power of $2$, so let us suppose that this is the case. Let
\begin{equation}
  \label{eq:charext}
\chi(a_i) = \sqrt{\varepsilon(\norm(a_i))}  
\end{equation}
(it does not really matter which square root one takes). Then we just
extend $\chi$ multiplicatively to the whole id\`ele group. Recall that
we already proved that $\chi^2$ and $\varepsilon\circ \norm$ coincide
on
$K^\times \cdot(\prod_{\id{q}}\Om_{\id{q}}^\times \times \CC^\times)$
so with this definition they coincide on the whole id\`ele group
$\II_K$.
	
There is a caveat here: it is not clear at all why the multiplicative
function that we defined is well defined! Once again, a power of the
id\`ele $a_i$ lies in
$K^\times \cdot(\prod_{\id{q}}\Om_{\id{q}}^\times \times \CC^\times)$
hence we need to prove that our definition really extends the previous
one. To avoid confusions, for the time being let $\tilde{\chi}$ denote
the function whose value at the id\`eles $a_i$ is given by
(\ref{eq:charextodd}) and (\ref{eq:charext}) and $\chi$ the character
on
$K^\times \cdot(\prod_{\id{q}}\Om_{\id{q}}^\times \times \CC^\times)$
defined before; if the id\`ele $a_i$ corresponds to an ideal of order
$s$ in the class group, the consistency relation translates into the
equality $\tilde{\chi}(a_i)^s=\chi(a_i^s)$. It is enough to prove it
in the following two cases:

\vspace{1pt}

\noindent $\bullet$ The ideal $\id{r}_i$ has odd order $s$ in the
class group. Then as explained before, there exists $b_i \in \II_K$,
$\alpha \in K^\times$ and
$u \in \prod_{\id{q}}\Om_{\id{q}}^\times \times \CC^\times$ such that
$a_i = \alpha u b_i^2$. Note that $\Id(b_i^s)$ is also a principal
ideal, hence $b_i^s$ lies in
$K^\times\cdot(\prod_{\id{q}}\Om_{\id{q}}^\times \times \CC^\times)$. Then
by~(\ref{eq:charextodd}) and the fact that
$\chi^2 = \varepsilon \circ \norm$ on
$K^\times \cdot (\prod_{\id{q}}\Om_{\id{q}}^\times \times
\CC^\times)$, we get that
\[
  \tilde{\chi}(a_i)^s =
  \chi(u)^s\varepsilon(\norm(b_i))^s=\chi(u)^s\varepsilon(\norm(b_i^s))=\chi(u)^s\chi^2(b_i^{s})=\chi(u^s
  b_i^{2s})=\chi(a_i^s).
\]

\vspace{1pt}

\noindent $\bullet$ The ideal $\id{r}_i$ has order a power of $2$, say
$2^s$ and is not a square (since it generates the class group).
By definition we want to prove the equality
\[
\chi(a_i^{2^s})=\tilde{\chi}(a_i)^{2^s} = \varepsilon(\norm(a_i))^{2^{s-1}} = \varepsilon(\norm(a_i^{2^{s-1}})).
\]
Let $b_i = a_i^{2^{s-1}}$, an id\`ele whose square satisfies that
$\Id(b_i^2)$ is principal. It is enough to prove that for any such
id\`ele, the following equality holds:
\begin{equation}
  \label{eq:charwelldef}
  \chi(b_i^2) = \varepsilon(\norm(b_i)).
\end{equation}
It is well known that the two torsion subgroup of the class group is
generated by the prime ideals $\id{q}=\langle q,\sqrt{-d}\rangle$,
where $q$ is an odd prime dividing $d$, and also the prime
$\id{p}_2=\langle 2,1+\sqrt{-d}\rangle$ when $d \equiv 1 \pmod 4$. Let $q$ be
an odd prime dividing $d$ and let $b_q$ be the id\`ele of $K$ defined
by
\[
  (b_q)_{\id{p}} =
  \begin{cases}
    1 & \text{ if }\id{p} \neq \id{q},\\
    \sqrt{-d} & \text{ if }\id{p} =\id{q}.
  \end{cases}
\]
Then $\Id(b_q) = \id{q}$. Similarly, if $d \equiv 1 \pmod 4$, let $b_2$ be the id\`ele define by 
\[
  (b_2)_{\id{p}} =
  \begin{cases}
    1 & \text{ if }\id{p} \neq \id{p}_2,\\
    1+\sqrt{-d} & \text{ if }\id{p} =\id{p}_2.
  \end{cases}
\]

\vspace{2pt}
\noindent{\bf Claim:} it is enough to prove (\ref{eq:charwelldef}) for the
elements $b_q$.
\vspace{2pt}

Suppose that equality~(\ref{eq:charwelldef}) holds for them. Let $b$
be an id\`ele satisfying that $\Id(b)^2$ is principal. Then
\[
\Id(b) = \prod_{q \mid 2d} \Id(b_q)^{\epsilon_q},
\]
for some $\epsilon_q \in \{0,1\}$, so there exists
$\alpha \in K^\times$, and
$u \in \prod_{\id{q}}\Om_{\id{q}}^\times \times \CC^\times$ such that
$b = \alpha u \prod_{q \mid 2d} b_q^{\epsilon_q}$. By the multiplicative property of the character $\chi$, 
\[
\chi(b^2) = \chi(u)^2 \prod_{q \mid 2d} \chi(b_q^2)^{\epsilon_q} = \varepsilon(\norm(u)) \prod_{q \mid 2d}\varepsilon(\norm(b_q))^{\epsilon_q} = \varepsilon(\norm(b)).
\]
Let $q$ be an odd prime dividing $d$.  To prove (\ref{eq:charwelldef})
for the elements $b_q$, we compute both sides of the equality and
prove that they coincide.  Note that $b_q^2 = q c_q$, where
$q \in K^\times$ and $c_q = b_q^2/q$ has the property that it is a
unit at all finite places. Then
\begin{equation}
  \label{eq:nada}
  \chi(b_q^2) =\chi_{\id{q}}\left(\frac{-d}{q}\right)\chi_2\left(\frac{1}{q}\right)\prod_{\id{p} \in Q_1 \cup Q_5 \cup Q_7}\chi_{\id{p}}\left(\frac{1}{q}\right),
\end{equation}
where the product runs over primes $\id{p} \neq \id{q}$. On the other hand, the
right hand side of~(\ref{eq:charwelldef}) equals
\begin{equation}
  \label{eq:epsilon1}
\varepsilon(\norm(b_q))= \varepsilon_q(d) = \varepsilon_q(d/q) \varepsilon_2(q)^{-1}\prod_{p \in Q_3 \cup Q_5}\varepsilon_p(q)^{-1},
\end{equation}
where the product runs over primes different from $q$. Recall that for
each ramified prime, under the isomorphism
$(\Om_K/\id{p})^\times \simeq (\Z/p)^\times$, we have the equality
$\chi_{\id{p}}=\varepsilon_p\delta_p$. In particular, both sides
evaluate the same at elements of $\Z_p^\times$. Using such a relation
in (\ref{eq:nada}) for all odd ramified primes and gathering together
the terms involving $\varepsilon$ gives
\begin{multline}
  \label{eq:compa}  
  \chi(b_q^2) = \chi_2^{-1}(q) \varepsilon_q\left(\frac{-d}{q}\right) \prod_{\substack{p \in Q_1 \cup Q_3 \cup Q_5\cup Q_7 \\ p \neq q }}\varepsilon_p^{-1}(q) \cdot \delta_q\left(\frac{-d}{q}\right) \prod_{\substack{p \in Q_1 \cup Q_3 \cup Q_5\cup Q_7 \\ p \neq q }} \delta_p(q) =  \\
=  \varepsilon_q(d) \left(\chi_2^{-1}(q)\varepsilon_q(-1)\varepsilon_2(q)\delta_q(2)^{v_2(d)}\right)\cdot \left(\delta_q(2)^{v_2(d)} \delta_q\left(\frac{-d}{q}\right)\prod_{\substack{p \in Q_1 \cup Q_3 \cup Q_5\cup Q_7 \\ p \neq q }}\delta_p(q)\right).  
\end{multline}
Our goal is to prove that the product of all the factors of the last
expression except the first one is $1$ for the result to hold.
When $q \equiv 1 \pmod 4$, quadratic reciprocity implies that
$\delta_q(p)= \delta_p(q)$ and $\delta_q(-1) = 1$, so the last factor
(between round brackets) in (\ref{eq:compa}) equals $1$. On the other hand,
for $q \equiv 3 \pmod 4$ quadratic reciprocity implies that
$\delta_q(p) = \delta_p(q) \delta_p(-1)$. Note that $q$ is one of the
elements in $Q_3 \cup Q_7$, so the last factor in (\ref{eq:compa})
equals $(-1)^{\#Q_3+\#Q_7}$. In both cases, the last factor equals
$\delta_{-1}(q)^{\#Q_3+\#Q_7}$.

Regarding the second factor, quadratic reciprocity again implies that
$\delta_q(2)=\delta_{2}(q)$ (recall the definition of $\delta_2$ from
Table~\ref{table:characters}). By definition $\varepsilon_2=\delta_{-1}^{\#Q_3 + \#Q_5}$ and on elements of $\Z_2^\times$, $\chi_2 = \delta_2^{v_2(d)+1}\delta_{-1}^{\#Q_5 + \#Q_7}$ (see (\ref{chi2})) then the right hand side of (\ref{eq:compa}) equals
\[
\varepsilon_q(d)\delta_2(q)\varepsilon_q(-1)\delta_{-1}(q)^{2\#Q_3 +
    2\#Q_5 + 2\#Q_7} = \varepsilon_q(d)\delta_2(q)\varepsilon_q(-1).
\]
Then we are led to prove that $\varepsilon_q(-1)\delta_2(q) = 1$,
which follows from the definitions, since:
\begin{itemize}
\item If $q \equiv \pm 1 \pmod 8$,
  $\varepsilon_q(-1) = 1 = \delta_2(q)$.

\item If $q \equiv \pm 3 \pmod 8$,
  $\varepsilon_q(-1) = -1 = \delta_2(q)$.
\end{itemize}

Suppose now that $d \equiv 1 \pmod 4$, when we also need to check the
compatibility for $b_2$. A similar computation as the previous one gives that:
\[
  \varepsilon_2(1+d) = \varepsilon_2\left(\frac{1+d}{2}\right) \cdot
  \prod_{p \in Q_3\cup Q_5}\varepsilon_p(2)^{-1},
\]
while
\[
  \chi(b_i^2) =\chi_2\left(\frac{(1+\sqrt{-d})^2}{2}\right)\cdot
  \prod_{p \in Q_3 \cup Q_5}\varepsilon_p(2)^{-1} \cdot \prod_{p \in
    Q_1\cup Q_3\cup Q_5\cup Q_7}\delta_p(2).
\]
By quadratic reciprocity,
$\prod_{p \in Q_1\cup Q_3\cup Q_5\cup Q_7}\delta_p(2) = (-1)^{\#Q_3 +
  \#Q_5}$, so the statement follows from the following easy to verify
(from its definitions) facts:

\begin{itemize}
\item If $d\equiv 1 \pmod 8$, then
  $\varepsilon_2\left(\frac{1+d}{2}\right) =1$,
  $\chi_2\left(\frac{(1+\sqrt{-d})^2}{2}\right)=1$ and $\#Q_3 + \#Q_5$
  is even.
\item If $d\equiv 5 \pmod{16}$, then
  $\varepsilon_2\left(\frac{1+d}{2}\right) =-1$,
  $\chi_2\left(\frac{(1+\sqrt{-d})^2}{2}\right)=1$ and $\#Q_3 + \#Q_5$
  is odd.
\item If $d\equiv 13 \pmod{16}$, then
  $\varepsilon_2\left(\frac{1+d}{2}\right) =1$,
  $\chi_2\left(\frac{(1+\sqrt{-d})^2}{2}\right)=-1$ and
  $\#Q_3 + \#Q_5$ is odd.
\end{itemize}

\vspace{2pt}

Now that we defined the character $\chi$ on the whole id\`ele group
and proved that it is well defined, we only need to verify
that our extension also satisfies
\[
  ^\tau \chi = \chi \cdot (\psi_{-2} \circ \norm)
\]
for all elements of $\II_K$. Since we already proved this is the case
for elements of $K^\times\cdot(\prod_{\id{q}}\Om_{\id{q}}^\times \times
\CC^\times)$, it is enough to prove it for the id\`eles
$a_i$ (as defined before) with local finite components
\[
  (a_i)_\id{p}=
  \begin{cases}
    r_i & \text{ if }\id{p} = \id{r}_i,\\
    1 & \text{ otherwise}.
  \end{cases}
\]
Note that $\tau(a_i)$ is the id\`ele of $K$ with value
$r_i$ at $\overline{\id{r}_i}$ and $1$ at the other places. Then
\begin{equation}
  \label{eq:tau1}
  ^\tau\chi(a_i) = \chi(\tau(a_i)) = \chi(a_i)^{-1} \chi(a_i \tau(a_i)) = \chi(a_i)^{-1} \chi\left(\frac{a_i \tau(a_i)}{r_i}\right),    
\end{equation}
where $\frac{1}{r_i}$ denotes the image by
$K^\times \hookrightarrow \II_K$.  Note that
$\frac{a_i\tau(a_i)}{r_i}$ is a unit at all places, so
\begin{equation}
  \label{eq:tau2}
  \chi\left(\frac{a_i \tau(a_i)}{r_i}\right) =
  \chi_2(r_i)^{-1}\prod_{\id{p} \in Q_1 \cup Q_5 \cup Q_7} \chi_{\id{p}}(r_i)^{-1}.
\end{equation}
By the product formula,
\begin{equation}
  \label{eq:varepsilon}
  1= \varepsilon(r_i) = \varepsilon_{r_i}(r_i) \varepsilon_2(r_i)\prod_{p \in Q_3 \cup Q_5}\varepsilon_p(r_i).
\end{equation}
Since
$\varepsilon_{r_i}(r_i) = \varepsilon(\norm(a_i)) = \chi^2(a_i)$,
multiplying (\ref{eq:tau2}) and (\ref{eq:varepsilon}) and using that $\chi_{\id{p}}(r_i) = \varepsilon_p(r_i) \delta_p(r_i)$ we get that
\begin{equation}
  \label{eq:11}
  \chi\left(\frac{a_i \tau(a_i)}{r_i}\right) =\chi^2(a_i)\chi_2(r_i)^ {-1}\varepsilon_2(r_i) \prod_{p \in Q_1 \cup Q_3\cup Q_5 \cup Q_7} \delta_p(r_i).
\end{equation}
Recall that $r_i$ splits in $K$, hence $\kro{-d}{r_i} = 1$ and quadratic reciprocity implies that 
\[
  1 = \kro{2}{r_i}^ {v_2(d)}\kro{-1}{r_i}^ {\#Q_3 + \#Q_7+1}\prod_{p
    \in Q_1 \cup Q_3\cup Q_5 \cup Q_7} \delta_p(r_i).
\]
In particular, the right hand side of (\ref{eq:tau1}) equals
\[
\chi(a_i)\chi_2(r_i)^{-1}\varepsilon_2(r_i)\delta_2(r_i)^ {v_2(d)}\delta_{-1}(r_i)^ {\#Q_3 + \#Q_7+1} = \chi(a_i)\cdot\left(\delta_2(r_i)\delta_{-1}(r_i)^{\#Q_5+\#Q_7}\varepsilon_2(r_i)\delta_{-1}(r_i)^ {\#Q_3 + \#Q_7+1}\right) .
  \]
A similar (but easier) computation shows that
$\psi_{-2}(\norm(a_i)) = \delta_{-2}(r_i)$, so the result follows from the equality
\[
  \delta_2(r_i)\varepsilon_2(r_i)\delta_{-1}(r_i)^ {\#Q_3 + \#Q_5+1} = \delta_2(r_i)\delta_{-1}(r_i)=\delta_{-2}(r_i).
\]
\end{proof}
\begin{remark}
	The precise conductor ${\mathfrak f}$ of $\chi_{\id{p}_2}$ has valuation:
	\[
	v({\mathfrak f}) =
	\begin{cases}
		5 & \text{ if } d\equiv 1 \pmod 8,\\
		3 & \text{ if } d \equiv 3, 5, 6 \pmod 8,\\
		3 & \text{ if } d \equiv 2\pmod 8 \text{ and }2 \nmid \#Q_3 + \#Q_5,\\
		0 & \text{ if } d \equiv 2 \pmod 8 \text{ and }2 \mid \#Q_3 + \#Q_5.\\
	\end{cases}
	\]
	When $d \equiv 7 \pmod 8$ it is either $0, 2, 3$ depending on its choice.
	\label{rem:conductorchi2}
\end{remark}
Although we constructed one Hecke character satisfying that
\[
^\tau \chi = \chi \cdot \psi_{-2},
  \]
it is a natural problem to understand all such possible finite order Hecke
characters. Note
that if $\chi$ is such a character, and $\nu$ is a character of
$\Gal_\Q$, then $\chi \cdot \nu$ also satisfies the same condition. 
%
%

\begin{thm}
\label{thm:unicity}
Let $\chi_1$ and $\chi_2$ be finite order Hecke characters such that
$^\tau\chi_i = \chi_i \cdot \psi_{-2}$ for $i=1,2$. Then there exists
a rational character $\nu:\Gal_\Q \to \CC^\times$ such that
$\chi_2 = \chi_1 \cdot \nu$.
\end{thm}
\begin{proof}
  Let $\tilde{\nu}$ denote the quotient $\frac{\chi_1}{\chi_2}$, a
  character on $\II_K$. The hypothesis
  $^\tau\chi_i = \chi_i \cdot \psi_{-2}$ implies that
  $^\tau\tilde{\nu} = \tilde{\nu}$. Let $\II_K^1$ be the kernel of the
  norm map $\norm:\II_K \to \II_\Q$.

  \vspace{2pt}

  \noindent {\bf Claim:} the character $\tilde{\nu}$ is trivial on $\II_K^1$.
  \vspace{2pt}

  To prove the claim, let $v$ be a place of $\QQ$ which does not split
  in $K$, and let $w$ be the place of $K$ dividing it. In particular
  $K_w/\Q_v$ is a Galois quadratic extension. If $k \in K_w$ has norm
  one, Hilbert's theorem 90 implies that there exists $t \in K_w$ such
  that $k = \frac{t}{\sigma(t)}$ for $\sigma \in \Gal(K_w/\Q_v)$
  non-trivial. The hypothesis $^\tau\tilde{\nu} = \tilde{\nu}$ then
  implies that $\tilde{\nu}(k)=1$. If the place $v$ happens to split,
  let $w_1,w_2$ be the two places of $K$ above it and let
  $(k_1,k_2) \in K_{w_1}\times K_{w_2}$ be an element of norm one,
  i.e. $k_1\cdot k_2 = 1$. The hypothesis
  $^\tau\tilde{\nu}=\tilde{\nu}$ implies that
  $\tilde{\nu}_{w_1}=\tilde{\nu}_{w_2}$, so
  $\tilde{\nu}_{w_1}(k_1)\tilde{\nu}_{w_2}(k_2) =
  \tilde{\nu}_{w_1}(k_1\cdot k_2) = 1$ as claimed.

  Then the character $\tilde{\nu}$ gives a well defined character on
  $\norm(\II_K)$, a subgroup of $\II_\Q$ and we can extend it to
  $\Q^\times \norm(\II_K)$ by making it trivial at elements of
  $\Q^\times$. Recall that $\Q^\times \norm(\II_K)$ has finite index
  in $\II_\Q$ so let $\nu$ be any extension of $\tilde{\nu}$ to the
  whole id\`ele group $\II_\Q$. Then by construction $\tilde{\nu}$
  coincides with $\nu \circ \norm$ on $\II_K$, so in particular
  $\chi_1 = \chi_2 \cdot (\nu\circ\norm)$.
\end{proof}
%
%
It is also natural to study whether our construction can be extended
to negative values of $d$, i.e. to real quadratic fields. The problem
now is that we need some control on the fundamental unit. In this
case, we have a partial answer.

\begin{thm}
  Suppose that $K = \QQ(\sqrt{d})$ is a real quadratic field, whose
  fundamental unit has norm $-1$. Then the same statement of Theorem~\ref{thm:charexistence} holds.
\end{thm}
\begin{proof}
  To use the previous results, write $d = -(-d)$ (so $d<0$ in the above
  notations/definitions) and make precisely the same local definitions
  for both $\varepsilon$ and $\chi$ at finite places.	
  There are two important facts to consider: while proving
  (\ref{eq:compat1}), we get an extra $-1$ factor coming from the fact that
  we changed $d \leftrightarrow -d$. This forces us to add ramification
  at one of the archimedean places (we will latter specify which one).
	
  Let $\epsilon$ be a fundamental unit (fixed). The proof works
  mutatis mutandis once we checked the compatibility of $\chi$ at
  $\epsilon$. Our assuming $\epsilon$ of norm $-1$, implies that
  $Q_3 = Q_7 = \emptyset$, the reason being that if
  $\epsilon = a+b\sqrt{d}$, with $2a, 2b \in \Z$, the condition
  $a^2-db^2 = -1$ implies that $-1$ is a square for all odd primes
  dividing $d$. Furthermore, for all such primes, the reduction of
  $\epsilon$ has order $4$, so that $\chi_p(\epsilon) = \pm 1$ if
  $p \in Q_1$ and a primitive fourth root of unity if $p \in Q_5$. We
  claim that
  $\chi_2(\epsilon) \prod_{p \in Q_5}\chi_p(\epsilon) = \pm 1$ (and
  therefore
  $\chi_2(\epsilon) \prod_{p \in Q_1 \cup Q_5}\chi_p(\epsilon) = \pm
  1$).
  \begin{itemize}
  \item If $d \equiv 1 \pmod 8$ then $\#Q_5$ is even and $\chi_2$ is
    quadratic, hence the statement.
		
  \item If $d \equiv 5 \pmod 8$ (the case $d=3$ in
    Table~\ref{table:quotientstructure}) $\#Q_5$ is odd, $2$ is inert,
    $\chi_2$ has order $4$ and evaluated at any element of order $4$
    gives a primitive fourth root of
    unity.
		
  \item If $d \equiv 2 \pmod 8$ and $\#Q_5$ is even, $\chi_2$ has
    order $2$, while if $\#Q_5$ is odd, $\chi_2$ has order $4$ and
    $\epsilon$ has order $8$ (which follows from
    Table~\ref{table:quotientstructure}, as its norm equals $-1$) so
    $\chi_2(\epsilon)$ is a fourth root of unity.
  \end{itemize}
  Then if the product
  $\chi_2(\epsilon) \prod_{p \in Q_1 \cup Q_5}\chi_p(\epsilon) = 1$,
  define $\chi$ to be trivial at the archimedean component where
  $\epsilon$ is negative and the sign character at the other, while if
  the product equals $-1$, take the opposite choice. Since
  $\norm(\epsilon)=-1$, the compatibility is satisfied and the same
  proof of Theorem~\ref{thm:charexistence} applies.
\end{proof}

For general real quadratic fields we run some numerical experiments (a
couple of hundreds) and in all cases, a character of the expected
conductor is found. The result will be explained (and proved) in a
sequel (see \cite{2103.06965}).

\subsection{The case $t \equiv 3 \pmod 4$}
\label{section:t=3}

Let $K=\Q(\sqrt{-d})$ with $d>0$ (square-free). The method is very
similar to the previous case. Define the following sets:
\begin{itemize}
\item
  $Q_{++}=\{p \mid d, \; p \nmid 2t, \; p \equiv \square \pmod 4, \;
  p\equiv \square \pmod t\}$.
\item
  $Q_{+-}=\{p \mid d, \; p \nmid 2t, \; p \equiv \square \pmod 4, \;
  p\not \equiv \square \pmod t\}$.
\item
  $Q_{-+}=\{p \mid d, \; p \nmid 2t, \; p \not \equiv \square \pmod 4,
  \; p \equiv \square \pmod t\}$.
\item
  $Q_{--}=\{p \mid d, \; p \nmid 2t, \; p \not \equiv \square \pmod 4,
  \; p\not \equiv \square \pmod t\}$.
\end{itemize}
We have the following elementary result (that will clarify later computations).
\begin{lemma}
  \label{lemma:tbehaviour}
  Suppose that $t$ is unramified in $K$. Then the prime $t$ splits in
  $K$ precisely when the following equality holds:
  \[
    (-1)^{\#Q_{+-}+\#Q_{--}}\delta_t(2)^{v_2(d)}=-1.
  \]
  Similarly, it is
  inert when $(-1)^{\#Q_{+-}+\#Q_{--}}\delta_t(2)^{v_2(d)}=1$.
\end{lemma}
\begin{proof}
  Follows easily from the well known fact that $t$ splits in $\Q(\sqrt{-d})$ if and only if $-d$ is a square modulo $t$.
\end{proof}

\noindent{\bf The character $\varepsilon$:} Define an even character
$\varepsilon:\II_\Q \to \CC^\times$ ramified at the primes in
$Q_{+ +}$, $Q_{-+}$, $Q_{+-}$ and eventually at $2$ and $t$. Its local components $\varepsilon_p$
are defined as follows:
\begin{itemize}
\item For primes $p \in Q_{++}\cup Q_{-+}$, the character
  $\varepsilon_p:\ZZ_p^\times \to \overline{\Q}^\times$ is quadratic,
  i.e, $\varepsilon_p=\delta_{p}$.
	
\item For primes $p \in Q_{+-}$, the character
  $\varepsilon_p:\ZZ_p^\times \to \overline{\Q}^\times$ is any
  character of order $2^{v_2(p-1)}$.

\item For $p=t$ define $\varepsilon_t=\begin{cases}
    \delta_t^{\#Q_{+-}+\#Q_{--}+v_t(d)+v_2(d)+1} & \text{ if }t\equiv 3\pmod 8,\\
    \delta_t^{\#Q_{+-}+\#Q_{--}+v_t(d)+1} & \text{ if }t\equiv 7\pmod 8.
		\end{cases}$

\item For $p=2$ define $\varepsilon_2=\begin{cases}
    \delta_{-1}^{\#Q_{-+}+\#Q_{--}+v_t(d)+v_2(d)+1} & \text{ if }t\equiv 3\pmod 8,\\
    \delta_{-1}^{\#Q_{-+}+\#Q_{--}+v_t(d)+1} & \text{ if }t\equiv 7\pmod 8.
		\end{cases}$
\item At all other primes, $\varepsilon_p$ is trivial.
\item The character $\varepsilon_\infty$ (the archimidean component) is trivial.
\end{itemize}
By Lemma~\ref{lemma:tbehaviour}, $\varepsilon_t$ is trivial
if $t$ splits in $K$ and equals $\delta_t$ if $t$ is inert in $K$. A similar result for $\varepsilon_2$ is the following.
\begin{lemma}
  \label{lemma:char2epsilon2}
  The previous defined character $\varepsilon_2$ satisfies that
  \[
    \varepsilon_2 = \begin{cases}
      1 & \text{ if } d \equiv 3 \pmod 4,\\
      \delta_{-1} & \text{ if } d \equiv 1 \pmod 4.
    \end{cases}
  \]
\end{lemma}
\begin{proof} Since $2 \nmid d$, the prime $2$ is ramified in $K$ if
  and only if $-d \equiv 3 \pmod 4$. Recall that the prime $t$ is not
  an element of $Q_{\pm \pm}$, so the prime divisors of $d$ congruent
  to $3$ modulo $4$ are precisely the ones in $Q_{-+} \cup Q_{--}$ and
  possibly $t$. Then the prime $2$ is ramified in $K$ precisely when
  $\#Q_{-+} + \#Q_{--} + v_t(d)$ is even. Since $2 \nmid d$,
  $\varepsilon_2$ equals $\delta_{-1}$ when $2$ is ramified and $1$
  when it is unramified, as claimed.
\end{proof}
With the above definitions, and using that
$\varepsilon_2(-1)\varepsilon_t(-1) = (-1)^{\#Q_{+-}+\#Q_{-+}}$, it
is not hard to verify that
\[
\prod_p\varepsilon_p(-1) \varepsilon_{\infty}(-1) = (-1)^{\#Q_{-+}+\#Q_{+-}}\varepsilon_2(-1)\varepsilon_{t}(-1)=1.
\]
The main result of the present section is the following.

\begin{thm}
  There exists a Hecke character $\chi:\Gal_K \to \overline{\QQ}$ such
  that:
  \begin{enumerate}
	\item $\chi^2(\sigma) = \varepsilon(\sigma)$ for all $\sigma \in \Gal_K$,
		
  \item $\chi$ is unramified at primes not dividing
    $2t\prod_{p \in Q_{+-} \cup Q_{--}}p$,
		
	\item If $\tau \in \Gal_\Q$ is not the identity on $K$, 
$^\tau\chi = \chi \cdot \psi_{-t}$ as characters of $\Gal_K$.
  \end{enumerate}
  \label{thm:charexistence2}
\end{thm}

\begin{proof}
	
  Following the strategy described at the beginning of the section,
  define $\chi_p:\Om_{\id{p}}^\times \to \CC^\times$ by:
		
  \begin{itemize}
  \item If $\id{p}$ is an odd (i.e. $\id{p}\nmid 2$) unramified prime,
    $\chi_{\id{p}}$ is the trivial character.
  \item If $p\in Q_{\pm \pm}$ and $\id{p}\mid p$,
 \begin{equation}
	\label{eq:charcomp2}
	\chi_{\id{p}} = \varepsilon_p \delta_p.    
\end{equation}

  \item At primes $\id{t}$ dividing $t$, define the character $\chi_\id{t}$ by
    \begin{itemize}
    \item If $t$ ramifies in $K$, $\chi_\id{t} = \varepsilon_{t}$.
			
    \item If $t$ splits in $K$, say $t = \id{t} \overline{\id{t}}$,
      let $\chi_{\id{t}} = \delta_t$ and
      $\chi_{\overline{\id{t}}}=1$.
			
    \item If $t$ is inert in $K$, $\chi_\id{t}$ is an order $4$ character (hence its restriction to $\F_t^\times$ is trivial).
    
    \end{itemize}
 We will denote in all cases $\chi_t=\prod_{\id{t}\mid t}\chi_{\id{t}}$.
  \item At a prime $\id{p}_2$ dividing $2$, define the character
    $\chi_{\id{p}_2}$ as follows:
    \begin{itemize}
    \item If $2$ is not ramified in $K/\Q$, it is trivial.
			
    \item If $2$ ramifies in $K/\Q$ but $2 \nmid d$, then define
      $\chi_{\id{p}_2}$ as:
      \begin{itemize}
      \item if $t\equiv3\pmod8$, the character of conductor $2$
        sending $\sqrt{-d}$ to $-1$.
					
      \item if $t\equiv7\pmod8$, the trivial character.
      \end{itemize}
    \item If $2\mid d$ then $\chi_{\id{p}_2}$ is the character
      of conductor $\id{p}_2^5$ whose value at the generators
      $5$, $-1$ and $1+\sqrt{-d}$ equals: $\chi_{\id{p}_2}(5)=-1$,
      $\chi_{\id{p}_2}(-1)=\delta_{2}(t)$
      and
      \begin{itemize}
      \item If $d\equiv2\pmod8$,
        $\chi_{\id{p}_2}(1+\sqrt{-d})=\begin{cases}
          1 & \text{ if } t\equiv 3\pmod8,\\
          \sqrt{-1} & \text{ if } t\equiv 7\pmod8.
	\end{cases}$
				
      \item If $d\equiv6\pmod8$,
        $\chi_{\id{p}_2}(1+\sqrt{-d})=\begin{cases}
          \sqrt{-1} & \text{if } t\equiv 3\pmod8,\\
          -1 & \text{if } t\equiv 7\pmod8.
        \end{cases}$
					
      \end{itemize}
\end{itemize}
  Abusing notation, we will write $\chi_2=\chi_{\id{p}_2}$.
\item The archimidean component of $\chi$ is trivial.
\end{itemize}
Let us recall some properties of the local characters just defined (which
motivate their definition) that will play a crucial role.
\begin{enumerate}
\item[(P1)] The product $\varepsilon_t\chi_t$ on elements of $\Z_t^\times$ equals 
\[
\varepsilon_t \chi_t = \begin{cases}
\delta_t & \text{ if } t \nmid d,\\
1 & \text{ if } t \mid d.
\end{cases}
\]
\item[(P2)] If $d \equiv 1 \pmod 4$ then $\chi_{2}(\sqrt{-d})=\delta_t(2)$.
\item[(P3)] If $2\mid d$, then $\chi_{2}^2(1+\sqrt{-d})=\chi_{2}(1+d)$.
\item[(P4)] If $2 \mid d$ then
  $\chi_{2}|_{\ZZ_2^\times}=\delta_{-2}$ if $t\equiv3\pmod8$
  and $\chi_{2}|_{\ZZ_2^\times}=\delta_{2}$ if
  $t\equiv7\pmod8$.
  
\item[(P5)] In all cases $\chi_{2}(-1)=\delta_2(t)^{v_2(d)}$ and $\chi_2(t)=1$.
\end{enumerate}

\vspace{1pt}

As in the previous case, define $\chi$ on
$K^\times \cdot (\prod_{\id{q}}\Om_{\id{q}}^\times \times \CC^\times)$
to be trivial at elements of $K^\times$ and as the product of the
local component at elements of the second factor. We claim that $\chi$ satisfies the expected three properties.
  \begin{enumerate}
\item We need to verify that the equality
\[
\chi^2 = \varepsilon \circ \norm
  \]
  holds for all elements of
  $K^\times \cdot (\prod_{\id{q}}\Om_{\id{q}}^\times \times
  \CC^\times)$. As before, it is enough to prove it componentwise, and
  the result for all primes not dividing $2t$ follows from
  (\ref{eq:charcomp2}) together with Lemma~\ref{lemma:charnorm}.

  For primes dividing $t$, the case $t$ split and $t$ ramified follows
  from the fact that both $\chi_t^2$ and $\varepsilon_t\circ \norm$ are
  trivial. In the inert case, it is enough to check the condition at a
  generator $g$ of $\F_{t^2}^\times$: $\chi_t^2(g)=-1$ (as $\chi_t$
  has order $4$), and $\varepsilon_t(\norm(g))=-1$ because $\norm(g)$
  generates $\F_t^\times$.
    
  If $2 \nmid d$, the
  statement is also clear as $\chi_{2}^2$ is trivial and
  $\varepsilon_2$ is trivial if $2$ is unramified (by
  Lemma~\ref{lemma:char2epsilon2}) and $\delta_{-1}$ when $2$ is
  ramified. But the norm map in the ramified case only takes the
  values $\{0,1,2\}$ modulo $4$, hence $\delta_{-1} \circ \norm$ is
  trivial as well. At last, if $2 \mid d$, both $\chi_2^2$ and
  $\varepsilon_2 \circ \norm$ are trivial at elements of $\Z_2^\times$
  and agree at $1+\sqrt{-d}$ by (P3).
\item The ramification statement is clear from the definition.
\item For odd primes $\id{p}$ not dividing $t$ the local character
  $(\psi_{-t})_p\circ \norm$ is trivial while
  $^\tau\chi_{\id{p}}=\chi_{\id{p}}$, hence the statement. For primes dividing $2$, since $(\psi_{-t})_2\circ\norm$ is
  also trivial, we need to verify that
  $^\tau\chi_{2} = \chi_{2}$ (remember the notation $\chi_{2}=\chi_{\id{p}_2}$, where $\id{p}_2$ is any prime dividing $2$). This follows easily from its
  definition when $2 \nmid d$. When $2 \mid d$, we only need to verify
  the property at the element $1+\sqrt{-d}$, but
  $^\tau\chi_{2}(1+\sqrt{-d}) =\chi_{2}(1-\sqrt{-d})=
  \chi_{2}(1+\sqrt{-d})^{-1}\chi_{2}(1+d) =
  \chi_{2}(1+\sqrt{d})$ by (P3).

  \noindent Let $\id{t}$ be a prime dividing $t$. If $t$ ramifies in
  $K$, $(\psi_{-t})_t\circ \norm$ is trivial while
  $^\tau\chi_{\id{t}} = \chi_{\id{t}}$, hence the statement.  If $t$
  splits, without loss of generality we can assume that
  $\chi_{\id{t}}$ matches $(\psi_{-t})_t\circ \norm$ and
  $\chi_{\overline{\id{t}}}$ is trivial, so the result holds. At last,
  suppose that $t$ is inert in $K$. Let $g$ be a generator of
  $\F_{t^2}^ \times$; then
  $^\tau\chi_t(g)\chi_t(g) = \chi_t(\norm(g)) =1$ (since $\chi_t$ is
  trivial on elements of $\F_t^\times$) so
  $^\tau\chi_t = \chi_t^{-1}$. On the other hand,
  $\delta_t(\norm(g))=-1$ because $\norm(g)$ is a generator of
  $\F_t^\times$, hence (since $\chi_t(g)$ is a fourth root of unity)
  $^\tau\chi_t(g) = \chi_t(g)^{-1} = -\chi_t(g)=\chi_t(g) \cdot
  \psi_{-t}(\norm(g))$ as claimed.
\end{enumerate}
	
\smallskip
	
\noindent{\bf Compatibility:} since all characters have order a power
of $2$, the compatibility relation at roots of unity of order $3$ (if
$K$ happens to have one) is trivial. The only case when $K$ contains
roots of unity of order $4$ is for $K=\Q(\sqrt{-1})$. In such a case,
all sets $Q_{\pm,\pm}$ are empty. By definition:
\begin{enumerate}
\item $\chi_{2}(\sqrt{-1}) = -1$ if $t \equiv 3 \pmod 8$ and $1$ if $t \equiv 7 \pmod 8$.
  
\item $\chi_t$ is an order $4$ character (since $t$ is inert in $\Q(\sqrt{-1})$) whose restriction to $\F_t^\times$ is trivial.
\end{enumerate}
Since $t \equiv 3 \pmod 4$, $-1$ is not a square in $\F_t$, hence
$\sqrt{-1}$ is an element of $\F_{t^2}^\times$ so
$\chi_t(\sqrt{-1}) \in \{\pm 1\}$. Let $g$ be a generator of
$\F_{t^2}$, and let $\sqrt{-1} = g^e$. Recall that the minimum power
of $g$ in $\F_t^\times$ is $t+1$, hence if $t \equiv 3 \pmod 8$, $e$
has valuation one at $2$ so $\chi_t(\sqrt{-1})=-1$, while if
$t \equiv 7 \pmod 8$, $4 \mid e$ and $\chi_t(\sqrt{-1})=1$ as needed.

\vspace{1pt}

For all other fields (abusing a little notation) we have
	
\[
  \chi(-1)=\prod_{\id{p}\in Q_{+-}\cup
    Q_{--}}\chi_{\id{p}}(-1)\chi_{2}(-1)\chi_{t}(-1)=(-1)^{\#Q_{+-}+\#Q_{--}}\chi_{2}(-1)\chi_{t}(-1).
\]
Recall from property (P5) that
$\chi_{2}(-1)=\delta_t(2)^{v_2(d)}$.  Consider the different
cases:
\begin{itemize}
\item If $t$ is unramified in $K$, quadratic reciprocity
  (Lemma~\ref{lemma:tbehaviour}) implies that $t$ splits in $K$
  (respectively is inert in $K$) if and only if
  $(-1)^{\#Q_{+-}+\#Q_{--}}\delta_t(2)^{v_2(d)}=-1$ (respectively
  $1$). In the first case $\chi_t(-1)=-1$ while in the second it
  equals $1$. In both cases,
  $(-1)^{\#Q_{+-}+\#Q_{--}}\chi_{2}(-1)\chi_t(-1)=1$ as
  expected.
\item If $t$ ramifies in $K$, by definition
  $\chi_t = \varepsilon_t$, its value at $-1$ equals
  \[
\varepsilon_t(-1)=(-1)^{\#Q_{+-}+\#Q_{--}}\delta_t(2)^{v_2(d)}.
  \]
\end{itemize}
	
\medskip
	
\noindent{\bf Extension:} we extend our character $\chi$ to id\`eles
whose image generates the class group $\Cl(K)$ exactly as in the
previous case, namely via (\ref{eq:charextodd}) for ideals of odd
order in the class group and via (\ref{eq:charext}) for those whose
order is a power of $2$. Then we are led to prove that if for each odd
prime number $q$ dividing $d$ (and also for $q=2$ if
$d \equiv 1 \pmod 4$), $b_q$ denotes the id\`ele
\[
  (b_q)_{\id{p}} =
  \begin{cases}
    1 & \text{ if }\id{p} \neq \id{q},\\
    \sqrt{-d} & \text{ if }\id{p} =\id{q}.
  \end{cases}
\]
then
\begin{equation}
  \label{eq:chraweldeff2}
\chi(b_q^2) = \varepsilon(\norm(b_i)).  
\end{equation}
Once again, we compute both sides of the equation to verify that they
do match. Start supposing that $q \neq t$, then the left hand side
equals
\begin{equation}
  \label{eq:nada2}
  \chi(b_q^2) =\chi_{\id{q}}\left(\frac{-d}{q}\right)\chi_2\left(\frac{1}{q}\right)\chi_t\left(\frac{1}{q}\right)\prod_{\id{p} \in Q_{\pm \pm}}\chi_{\id{p}}\left(\frac{1}{q}\right),
\end{equation}
where the product is over primes which do not divide $q$.  On the
other hand, the right hand side equals
\begin{equation}
  \label{eq:epsilon2}
\varepsilon(\norm(b_q))= \varepsilon(d) = \varepsilon_q(d/q) \varepsilon_2(q)^{-1}\varepsilon_t(q)\prod_{p \in Q_{\pm \pm}}\varepsilon_p(q)^{-1},
\end{equation}
where the product runs over primes different from $q$. Recall that for
all ramified primes different from $t$,
$\chi_{\id{p}}=\varepsilon_p\delta_p$. In particular, both sides
evaluate the same at elements of $\Z_p^\times$. Using such a relation
in (\ref{eq:nada2}) for all odd ramified primes different from $t$ we
get
\begin{multline}
  \label{eq:compa2}  
  \chi(b_q^2) = \chi_2(q)^{-1} \chi_t(q)^{-1}\varepsilon_q\left(\frac{-d}{q}\right) \prod_{\substack{p \in Q_{\pm \pm} \\ p \neq q }}\varepsilon_p(q)^{-1} \cdot \delta_q\left(\frac{-d}{q}\right) \prod_{\substack{p \in Q_{\pm \pm} \\ p \neq q }} \delta_p(q) =  \\
=  \varepsilon_q(d) \left(\chi_2(q)^{-1}\chi_t(q)^{-1}\varepsilon_q(-1)\varepsilon_2(q)\varepsilon_t(q)^{-1}\delta_q(2)^{v_2(d)}\delta_{q}(t)^{v_t(d)}\right)\cdot \left(\delta_q(2)^{v_2(d)}\delta_{q}(t)^{v_t(d)} \delta_q\left(\frac{-d}{q}\right)\prod_{\substack{p \in Q_{\pm \pm} \\ p \neq q }}\delta_p(q)\right).  
\end{multline}
Our goal is to prove that the product of all the factors except the
first one is $1$ for the result to hold.
By quadratic reciprocity, if $p,q$ are odd primes, then
\[
  \delta_p(q)\delta_q(p) =
  \begin{cases}
    1 & \text{ if }p \in Q_{+ \pm},\\
    \delta_{-1}(q) & \text{ if }p \in Q_{-\pm}.
    \end{cases}
  \]
  Then the last term of~(\ref{eq:compa2}) equals
  $\delta_{-1}(q)^{\#Q_{-+}+\#Q_{--}}$.  Regarding the middle factor,
  we claim that the following equality holds (note that it does not
  involve the factor $\varepsilon_q(-1)$):
\begin{equation}
  \label{eq:larga}
  \left(\chi_2(q)^{-1}\chi_t(q)^{-1}\varepsilon_2(q)\varepsilon_{t}(q)\delta_q(2)^{v_2(d)}\delta_{q}(t)^{v_t(d)}\right)\delta_{-1}(q)^{\#Q_{-+}+\#Q_{--}}=\delta_q(t).
\end{equation}

Since all the above values belong to $\{\pm 1\}$ we can remove the
inverses. By property (P1),
$\chi_t(q)\varepsilon_t(q)=\delta_t(q)^{1+v_t(d)}$, so quadratic
reciprocity implies that
$\chi_t(q)\varepsilon_t(q)\delta_q(t)^{v_t(d)}=\delta_q(t)\delta_{-1}(q)^{1+v_t(d)}$,
and the claim is equivalent to the equality
\[
  \left(\chi_2(q)\varepsilon_2(q)\delta_q(2)^{v_2(d)}\right)\delta_{-1}(q)^{\#Q_{-+}+\#Q_{--}+1+v_t(d)}=1.
\]
The last term equals $\varepsilon_2(q)$ when $t \equiv 7 \pmod 8$ and
it equals $\varepsilon_2(q)\delta_{-1}(q)^{v_2(d)}$ when
$t \equiv 3 \pmod 8$. But recall that $\chi_2$ on elements of
$\Z_2^\times$ is trivial when $2\nmid d$ and when $2 \mid d$, it
equals $\delta_{-2}$ if $t \equiv 3 \pmod 8$ and $\delta_2$ if
$t \equiv 7 \pmod 8$ by property (P4). Then the claim follows from the
observation that $\delta_q(2)=\delta_{2}(q)$.
	
To finish the compatibility proof when $q \neq t$, we need to verify
that $\delta_q(t)\varepsilon_q(-1) = 1$, an equality that follows from
the definitions (that are collected in
Table~\ref{table:compatibility2}).
\begin{table}[h]
  \begin{tabular}{|c|c|r|r||c|c|r|r|}
    \hline
    $q \pmod 4$ & $q \pmod t$  & $\varepsilon_q(-1)$ & $\delta_q(t)$& $q \pmod 4$ & $q \pmod t$  & $\varepsilon_q(-1)$ & $\delta_q(t)$ \\
    \hline \hline
    $1$ &  $\square$ & $1$ & $1$ &   $3$ &  $\square$  & $-1$ & $-1$  \\ 
    \hline		
    $1$ &  $\mathord{\not\mathrel{\square}}$  & $-1$ & $-1$ &   $3$ &  $\mathord{\not\mathrel{\square}}$  & $1$ & $1$  \\ 
    \hline
  \end{tabular}
  \caption{\label{table:compatibility2}}
\end{table}

If $q = t$ (so in particular $t \mid d$) the computation is similar,
replacing $q$ by $t$ in (\ref{eq:nada2}) and in (\ref{eq:epsilon2})
but omitting the factor with subscript $t$. Note that in this case
property (P1) states that $\chi_t \varepsilon_t = 1$ so the analogue
of (\ref{eq:compa2}) becomes
\begin{equation}
  \label{eq:q=t}  
  \chi(b_t^2) = \chi_2(t)\varepsilon_t\left(\frac{-d}{t}\right) \prod_{p \in Q_{\pm \pm}}\varepsilon_p(q)^{-1} \cdot \prod_{p \in Q_{\pm \pm}} \delta_p(q) =  
  \varepsilon_t(d) \left(\chi_2(t)\varepsilon_t(-1)\varepsilon_2(t)\right)\cdot \left(\prod_{p \in Q_{\pm \pm}}\delta_p(t)\right).  
\end{equation}
Quadratic reciprocity implies that $\delta_p(t)= 1$ if
$p \in Q_{++}\cup Q_{--}$ and $-1$ if $p \in Q_{+-}\cup Q_{-+}$. Then
the last factor equals
$(-1)^{\#Q_{+-}+\#Q_{-+}} = \varepsilon_t(-1)\varepsilon_2(t)$ (since
$\delta_{t}(-1)=-1$ and $\delta_{-1}(t)=-1$) and the validity of
(\ref{eq:q=t}) follows from the fact that $\chi_2(t)=1$ (by property
(P5)).

At last, when $d\equiv 1\pmod 4$ we also need to prove the same result for the
id\`ele $b_2$ whose local component equals $1+\sqrt{-d}$ at the place
$\id{p}_2$ and $1$ at all other places. Then
\begin{equation}
  \label{eq:eq2compat2}
 \varepsilon_2(1+d) = \varepsilon_2\left(\frac{1+d}{2}\right) \cdot \prod_{p \in Q_{\pm \pm}}\varepsilon_p(2)^{-1} \cdot \varepsilon_t(2)^{-1},  
\end{equation}
while
\begin{equation}
  \label{eq:eq2compat22}
  \chi(b_i^2) = \chi_{2}\left(\frac{(1+\sqrt{-d})^2}{2}\right) \prod_{p \in Q_{\pm \pm}}\varepsilon_p(2)^{-1} \delta_p(2) \cdot \chi_{t}(2)^{-1}.
\end{equation}
Then we are led to prove that
\[
\varepsilon_2\left(\frac{1+d}{2}\right) \varepsilon_t(2)^{-1}=\chi_{2}\left(\frac{(1+\sqrt{-d})^2}{2}\right) \prod_{p \in Q_{\pm \pm}}\delta_p(2) \cdot \chi_{t}(2)^{-1}.
\]
Recall by Property (P1) that
$\varepsilon_t(2)\chi_t(2) = \delta_t(2)^{1+v_t(d)}$. Also, the equality
$\frac{(1+\sqrt{-d})^2}{2} = \frac{1-d}{2}+\sqrt{-d}$ implies that
$\chi_{2}\left(\frac{(1+\sqrt{-d})^2}{2}\right)=\chi_{2}(\sqrt{-d})=\delta_t(2)$
(by definition), so
\[
\varepsilon_t(2)\chi_t(2)^{-1}\chi_{2}\left(\frac{(1+\sqrt{-d})^2}{2}\right) = \delta_t(2)^{v_t(d)}.
  \]
Regarding the other terms:
\begin{itemize}
\item If $d \equiv 1 \pmod 8$ then $\varepsilon_2\left(\frac{1+d}{2}\right)=1$ and $\prod_{p \in Q_{\pm \pm}}\delta_p(2) \delta_t(2)^{v_t(d)} = 1$, hence the statement.
  
\item If $d \equiv 5 \pmod 8$ then $\varepsilon_2\left(\frac{1+d}{2}\right)=-1$ and $\prod_{p \in Q_{\pm \pm}}\delta_p(2) \delta_t(2)^{v_t(d)} = -1$, hence the statement.
\end{itemize}

\vspace{2pt}

Now that we have a well defined character on the whole ad\`ele group $\II_K$, we need to verify that the condition
\[
  ^\tau \chi = \chi \cdot (\psi_{-t} \circ \norm)
\]
holds for the extension. Once again, it is enough to check the
property at the id\`eles $a_i$ in $\II_K$ with trivial infinite
component and finite components:
\[
  (a_i)_\id{p}=
  \begin{cases}
    r_i & \text{ if }\id{p} = \id{r}_i,\\
    1 & \text{ otherwise}.
  \end{cases}
\]
where the unramified prime ideals $\{\id{r}_i\}$ generate the class
group and where $r_i$ is the norm of $\id{r}_i$.  The same computation of (\ref{eq:tau1}),
(\ref{eq:tau2}) and (\ref{eq:varepsilon}) makes equation~(\ref{eq:11})
becomes
\[
^\tau\chi(a_i)=\chi(a_i)^{-1}  \chi\left(\frac{a_i \tau(a_i)}{r_i}\right) =\chi(a_i)\chi_2(r_i)^ {-1}\chi_t(r_i)^{-1}\varepsilon_2(r_i) \varepsilon_t(r_i)\prod_{p \in Q_{\pm \pm}} \delta_p(r_i).
\]
The fact that $r_i$ splits in $K$ implies that $\kro{-d}{r_i} = 1$ so
quadratic reciprocity gives
\[
  1 = \kro{2}{r_i}^ {v_2(d)}\kro{t}{r_i}^{v_t(d)}\kro{-1}{r_i}^ {\#Q_{-+}+
    \#Q_{--}+1}\prod_{p \in Q_{\pm \pm}} \delta_p(r_i).
\]
Since $\psi_{-t}(\norm(a_i))=\delta_t(r_i)$, we are left to verify that
\[
  \chi_2(r_i)^{-1}\chi_t(r_i)^{-1}\varepsilon_2(r_i)\varepsilon_{t}(r_i)\delta_{r_i}(2)^{v_2(d)}\delta_{r_i}(t)^{v_t(d)}\delta_{-1}(r_i)^{\#Q_{-+}+\#Q_{--}+1}=\delta_t(r_i),
\]
which follows directly from ~(\ref{eq:larga}) .
\end{proof}

Theorem~\ref{thm:unicity} and its proof translates mutatis mutandis to
the case $t \equiv 3 \pmod 4$ studied in this section, so in particular $\chi$ is unique up to a character of $\Gal_\Q$. 

\begin{remark}
  The precise conductor ${\mathfrak f}$ of $\chi_{\id{p}_2}$ has valuation:
  \[
    v({\mathfrak f}) =
    \begin{cases}
      0 & \text{ if } d\equiv 3 \pmod 4,\\
      2 & \text{ if } d \equiv 1 \pmod 4 \text{ and }t \equiv 3 \pmod8,\\
      0 & \text{ if } d \equiv 1 \pmod 4 \text{ and }t \equiv 7 \pmod8,\\
      5 & \text{ if } 2 \mid d.
    \end{cases}
    \]
    \label{rem:conductorchi2Et}
\end{remark}

\section{Extension and lowering the level}
\label{section:extension}

Recall that by Proposition~\ref{prop:qcurveE} (respectively
Proposition~\ref{prop:qcurveE2}) the Galois conjugate of the elliptic
curve $\E$ (respectively $\Et$) is isogenous to its quadratic twist by $\psi_{-2}$
(respectively to its quadratic twist by $\psi_{-3}$). In particular if
$\chi$ denotes the character constructed in
Theorem~\ref{thm:charexistence} (respectively
Theorem~\ref{thm:charexistence2}) then the twisted representation
$\rho_{\E,p}\otimes \chi$ (respectively $\rho_{\Et,p}\otimes \chi$) is
invariant under the action of the Galois group $\Gal(K/\Q)$ so it
should extend to a $2$-dimensional Galois representation of
$\Gal_\Q$. In this section we will prove that this is indeed the case
and furthermore, compute the determinant and conductor of the extension. Let us
state an important result on induced representations.

\begin{thm}
  \label{thm:Henniart}
  Let $E/F$ be a finite extension of local fields, and let $\rho$ be
  an $n$-dimensional representation of $W_E$. Then the conductor of the
  induced representation $\Ind_{W_E}^{W_F}$ equals
  \begin{equation}
    v(\cond(\Ind_{W_E}^{W_F} \rho))=n\delta(E/F) +f(E/F) v(\cond(\rho)),
    \label{eq:Henniart}
  \end{equation}
  where $\delta(E/F)$ denotes the valuation of the different of the extension and $f(E/F)$ the inertial degree.
\end{thm}
\begin{proof}
  See for example \cite{MR0354618}, page 105 (after Proposition 4).
\end{proof}

\subsection{The case $\rho_{\E,p}$}
Let $\varepsilon$ be the real character constructed in
Section~\ref{section:t=2} and let $S(\E)$ be the set of odd primes of
bad reduction of $\E$ (where the curve has multiplicative reduction by
Lemma~\ref{lemma:multred}). Abusing notation, we will say that a
rational prime $q \in S(\E)$ if there exists a prime element of
$S(\E)$ dividing $q$.

\begin{thm}
Suppose that $K/\Q$ is imaginary quadratic. Then the twisted
representation $\rho_{\E,p}\otimes \chi$ extends to a $2$-dimensional
representation of $\Gal_\Q$ attached to a newform of weight $2$,
Nebentypus $\varepsilon$ and level $N$ given by
\[
  N=2^e \cdot \prod_{q \in S(\E)}q \cdot \prod_{q\in Q_3} q \cdot \prod_{q \in
    Q_1 \cup Q_5 \cup Q_7}q^2.
\]
The value of $e$ is one of:
\[
  e = \begin{cases}
    1,8 & \text{ if } 2 \text{ splits},\\
    8 & \text{ if } 2 \text{ is inert},\\
    7,8 & \text{ if } d \equiv 5 \pmod 8,\\
    5, 8 & \text{ if } d\equiv 1 \pmod 8,\\
    8, 9 & \text{ if }2 \mid d.
  \end{cases}
\]
Furthermore, the coefficient field is a quadratic extension of
$\Q(\chi)$.
\label{thm:levelandnebentypus}
\end{thm}

\begin{proof}
  As mentioned before, the existence of the extension was proved by
  Ribet in (\cite{MR2058653}). We give an alternative proof based on
  Galois representations (which is well known to experts) to get
  control on the level and conductor. If the elliptic curve $\E$ has
  complex multiplication, then its $j$-invariant is a real number in
  $K$, hence rational. This implies that the curve is a quadratic
  twist of a rational elliptic curve, hence the existence of the
  extension is automatic. Assume then that $\E$ does not have complex
  multiplication.
	
  To easy notation, let $\rho$ denote $\rho_{\E,p}\otimes \chi$. Its
  conductor divides $\lcm\{N(E_{(a,b,c)}),\cond(\chi)^2\}$ and its
  Nebentypus matches $\varepsilon$ restricted to $\Gal_K$ (by the
  first claim of Theorem~\ref{thm:charexistence}).  Let $\tau$ be as
  in Theorem~\ref{thm:charexistence} (i.e. an element of $\Gal_\Q$
  whose restriction to $\Gal(K/\Q)$ is non-trivial) and suppose
  furthermore that it corresponds to complex conjugation (although
  this is not really necessary). It is enough to define the extension
  of $\rho$ at $\tau$ and check the Nebentypus statement on it.
	
  Recall that $^{\tau}\rho$ denotes the Galois representation defined
  on $\sigma$ by $^{\tau}\rho(\sigma) = \rho(\tau \sigma
  \tau^{-1})$. By the third property of $\chi$, $\rho$ and
  $^{\tau}\rho$ are isomorphic (as they have the same trace at
  Frobenius elements). In particular, there exists
  $A \in \GL_2(\overline{\Q_p})$ such that
  $\rho = A ^{\tau}\rho A^{-1}$. Furthermore, since $\rho$ is
  irreducible (because $\E$ does not have complex multiplication),
  Schur's lemma implies that the matrix $A$ is unique up to a scalar.
  Since $\tau$ has order $2$, the equality
  $\rho(\sigma) = {^{\tau^2}\rho} {(\sigma)} = A^2 \rho(\sigma)
  A^{-2}$ implies that $A^2 = \lambda$ (a scalar matrix).
	
  Suppose that there exists an extension
  $\tilde{\rho}:\Gal_\Q \to \GL_2(\overline{\Q_p})$. Then for
  $\sigma \in \Gal_K$,
  $\tilde{\rho}(\tau \sigma \tau^{-1}) =
  \tilde{\rho}(\tau)\tilde{\rho}(\sigma)\tilde{\rho}(\tau)^{-1} =
  \tilde{\rho}(\tau)\rho(\sigma)\tilde{\rho}(\tau)^{-1}$ and the
  uniqueness of the matrix $A$ implies that there exists a constant
  $\mu$ such that $\tilde{\rho}(\tau) = \mu A$. Since
  $\tilde{\rho}^2(\tau) = \rho(\tau^2) = 1$,
  $\mu^2 = \frac{1}{\lambda}$.

  This suggests that we should define
  $\tilde{\rho}(\tau) = \frac{1}{\sqrt{\lambda}}A$ and it is easy to
  verify that this definition indeed gives an extension (the other
  choice of a square root gives the second possible extension; they
  differ by the twist by the character attached to the quadratic extension
  $K/\Q$). To determine the determinant of the extended representation
  $\tilde{\rho}$, note that $\det(\rho) = \varepsilon \cyclo$
  (where $\cyclo$ denotes the cyclotomic $p$-adic character) as a character
  of $\Gal_K$, hence it is enough to check that they coincide in an
  element of $\Gal_\Q$ which is not in $\Gal_K$ (like $\tau$).  By
  Ribet's result, we know that any extension is odd, i.e.
  $\det(\tilde{\rho})(\tau) = -1$. But also
  $\varepsilon(\tau) \cyclo(\tau) = -1$ (since $\varepsilon$ is
  even) hence $\det(\tilde{\rho})=\varepsilon \cyclo$.

  Modularity of the representation $\tilde{\rho}$ follows from Serre's
  conjectures (\cite{MR2551764} and \cite{MR2827796}), so we know it
  is attached to a modular form of weight $2$ and Nebentypus
  $\varepsilon$; we need to compute the conductor of $\tilde{\rho}$ to
  finish the proof. Let $\id{q}$ be an odd prime unramified in
  $K/\Q$. Then $\chi$ is unramified at $\id{q}$, so the conductor
  valuation of $\rho_{\E,p}\otimes \chi$ equals one for primes in
  $S(\E)$ and zero for the other ones. The validity of the second
  factor in the formula for $N$ then follows from the fact that the
  conductor of a representation does not change while restricting it
  to the absolute Galois group of an unramified extension.

  Let $\id{q}$ be an odd prime ramifying in $K/\Q$ and $q$ be the
  rational prime it divides (which does not equal $p$). The local
  induced representation
  $\Ind_{\Gal_{K_\id{q}}}^{\Gal_{\Q_q}}(\rho_{\E,p}|_{\Gal_{K_\id{q}}}\otimes
  \chi_{\id{q}}) = \tilde{\rho}|_{\Gal_{K_{\id{q}}}} \oplus
  (\tilde{\rho}|_{\Gal_{K_{\id{q}}}}\otimes \mu_{K_{\id{q}}})$, where
  $\mu_{K_{\id{q}}}$ is the quadratic character of the local the extension
  $K_{\id{q}}/\Q_q$. Now we apply Theorem~\ref{thm:Henniart}. By
  construction $\chi_{\id{q}}$ is unramified for $q \in Q_3$ and of
  conductor $\id{q}$ for $q \in Q_1 \cup Q_5 \cup Q_7$, hence the
  right hand side of~(\ref{eq:Henniart}) equals $2$ for primes in
  $Q_3$ and $4$ for primes in $Q_1 \cup Q_5 \cup Q_7$. Note that both
  $\tilde{\rho}$ and $\tilde{\rho} \otimes \mu$ have the same
  Nebentypus, hence they both must ramify at primes in $Q_3$ with
  conductor exponent $1$. Since $\mu$ has conductor exponent $1$ at
  $q$, twisting $\tilde{\rho}$ by $\mu$ cannot vary the conductor
  exponent from $1$ to $3$ (or from $0$ to $4$), hence at primes in
  $Q_1 \cup Q_5 \cup Q_7$ both $\tilde{\rho}$ and
  $\tilde{\rho}\otimes \mu$ have conductor exponent $2$ as claimed.

  The value of $e$ equals the value of the conductor exponent of $\rho_{\E,p}$
  when $2$ is unramified; this gives the inert case
  result. Furthermore, in the split case (say
  $2 = \id{p}_2\overline{\id{p}}_2$) we can chose the local character
  $\chi_{\id{p}_2}$ so that the twist of $\E$ by $\chi_{\id{p}_2}$ has
  split multiplicative reduction of conductor $\id{p}_2$ to get the
  statement.

  If $2$ ramifies in $K/\Q$ we use again
  formula~(\ref{eq:Henniart}). Note that if $d \not \equiv 1 \pmod 8$
  then the value of the conductor of $\rho_{\E,p}$ at $\id{p}_2$ is
  larger than twice the value of the conductor of $\chi$ at $\id{p}_2$
  (see Remark~\ref{rem:conductorchi2}) so the formula follows easily
  from Lemma~\ref{lemma:2valuation} noting that once again the
  conductor at $2$ of $\tilde{\rho}$ matches that of
  $\tilde{\rho}\otimes \mu$. When $d \equiv 1 \pmod 8$, the local
  character $\chi_{\id{p}_2}$ corresponds to a tame extension
  (generated by the square root of $1+\sqrt{-d}$ times a unit), so
  Lemma~\ref{lemma:d=1reduction} implies that the twisted
  representation has conductor valuation $8$ (when
  $b \equiv 1 \pmod 4$) or $6$ (when $b \equiv 3 \pmod 4$) when $b$
  is odd and $12$ when $b$ is even. Since we can assume (changing $b$
  by $-b$) that $b \equiv 3 \pmod 4$ the result follows.
\end{proof}
\begin{remark}
  The coefficient field can be computed as follows: if $p$ is a prime
  inert in $K/\Q$ then
  $\trace(\tilde{\rho}(\Frob_p))^2=a_p(\E)\chi(\Frob_p)+2\varepsilon(\Frob_p)p$, so
  it is enough to perform such computation for one inert prime of
  $K/\Q$.
	\label{rem:coefffield}
\end{remark}

\begin{remark}
  If $K$ is real quadratic the same proof gives an extension, but we
  cannot distinguish whether the Nebentypus equals $\varepsilon$ or
  $\varepsilon \mu_K$. Our proof deeply used the fact that by Ribet's
  result, the extension is odd, but we do not get any information for
  real quadratic fields $K$, as complex conjugation is an element of
  $\Gal_K$. In the forthcoming article (\cite{2103.06965}) a
  completely different approach will be presented to solve this issue.
\end{remark}	

Let $\tilde{\rho}_p$ denote the extension of
$\rho_{\E,p}\otimes \chi$, a modular representation of
$\Gal_\Q$.

\begin{coro}
  Suppose that $p \nmid 2d$ and suppose that the residual Galois
  representation $\overline{\tilde{\rho}_p}$ is absolutely irreducible. Then there exists a newform $g \in S_2(\Gamma_0(n),\varepsilon)$, where
  \[
  n=2^e \cdot \prod_{q\in Q_3} q\cdot \prod_{q \in Q_1 \cup Q_5 \cup Q_7}q^2,  
\]
for $e$ as in Theorem~\ref{thm:levelandnebentypus}, such that
$\rho_{\E,p} \equiv \rho_{g,K,p}\otimes \chi^{-1} \pmod{\id{p}}$, where
$\rho_{g,K,p}$ is the restriction of the representation $\rho_{g,p}$
to the Galois group $\Gal_K$ and $\id{p}$ is a prime ideal of
$\overline{\Q}$ dividing $p$.
\label{coro:loweringlevelE}
\end{coro}
\begin{proof}
  Let $\id{q}$ be a prime ideal of $\Om_K$ dividing $N(E_{(a,b,c)})$ but
  not dividing $p$. By Lemma~\ref{lemma:loweringthelevel} and by the well known
  Hellegouarch's result the residual representation
  $\overline{\rho_{\E,p}}$ is unramified at $\id{q}$. Since $\chi$ is
  unramified at $\id{q}$, the same holds for
  $\overline{\rho_{\E,p}\otimes \chi}$, and since $K/\Q$ is unramified
  at $\id{q}$, the image of inertia of $\overline{\tilde{\rho}_p}$
  matches that of $\overline{\rho_{\E,p}}$. In particular,
  $\overline{\tilde{\rho}_p}$ is unramified at all primes not dividing
  $2dp$. The \emph{finite} hypothesis (to remove $p$ also from the
  level) at $\id{p}$ for primes $\id{p}$ dividing  $p$ follows from the same
  argument given in \cite{MR2075481} (page 783) under our assumption
  that $p$ does not ramify in $K/\Q$. Our absolutely irreducible
  assumption implies that we are in the hypothesis of Ribet's lowering
  the level result (see \cite{MR1047143}) so there exists an eigenform
  $g \in S_2(n,\varepsilon)$ with $n$ only divisible by ramified
  primes and probably by the prime two which is congruent to our representation $\tilde{\rho}_p$ modulo
  $\id{p}$ for some prime ideal $\id{p}$ dividing $p$.
\end{proof}
It is important to remark that the level, weight and Nebentypus of the
newform $g$ does not depend neither on the particular solution
$(a,b,c)$ nor on the prime $p$.

\begin{prop}
  \label{prop:conductorCM}
  The extension of the trivial solutions $(\pm 1,0,1)$ corresponds to
  forms $g_{\pm}$ with complex multiplication in the space
  $S_2(\Gamma_0(n),\varepsilon)$ where
  $n=2^8 \cdot \prod_{q\in Q_3} q\cdot \prod_{q \in Q_1 \cup Q_5 \cup
    Q_7}q^2$.
\end{prop}
\begin{proof}
  By Lemma~\ref{lemma:CM1} and Remark~\ref{rem:conductorCM} the
  conductor exponent of the elliptic curve with complex multiplication
  attached to the trivial solution $(\pm 1,0,1)$ equals $8$ when $2$
  does not ramify in $K/\Q$, it equals $12$ if $d \equiv 1\pmod 4$ and
  $10$ if $2\mid 10$. In all cases, formula~(\ref{eq:Henniart})
  implies that the extension has exponent valuation $8$ at $2$.
\end{proof}
	
\subsection{The case $\rho_{\Et,p}$}
As in the previous section, let $\varepsilon$ be the real character
constructed in Section~\ref{section:t=3} and let $S(\Et)$ be the set
of odd primes different from $3$ of bad reduction of $\Et$. Abusing
notation, if $q$ is a rational prime, by $q \in S(\Et)$ we will mean
that there exists a prime ideal of $\Om_K$ dividing $q$ in the set $S(\Et)$.

Before stating the result, let us make some remarks regarding the
conductor of the twisted representation $\rho_{\Et,p}\otimes
\chi$. Let $\id{q}$ be an odd prime ramifying in $K/\Q$ not dividing
$3$. The curve $\tilde{E}_{(a,b,c)}$ has additive reduction at all
such primes and its local type (by Remark~\ref{rem:twistlevel}) is
that of a principal series (given by a character whose inertial part
has order $3$) or a supercuspidal representation. Since the inertial
part of $\chi_{\id{q}}$ has order a power of two, it cannot cancel the
inertial contribution of $\rho_{\Et,p}$, hence the conductor of the
twisted representation at $\id{q}$ has still valuation $2$.

At primes dividing $2$, the conductor valuation of $N(\Et)$ never
matches the square of the conductor of $\chi_{\id{p}_2}$ (see
Lemma~\ref{lemma:case2conductor2} and
Remark~\ref{rem:conductorchi2Et}) hence the twisted representation has
conductor the least common multiple of both quantities. At primes
dividing $3$ there is a situation where the twisted representation has
smaller conductor than the elliptic curve. It happens precisely when
$3$ is inert in $K/\Q$ and $\tilde{E}_{(a,b,c)}$ has conductor
valuation $2$. In such case, the local type of the Weil-Deligne
representation is that of a principal series whose inertia is given by
an order $4$ character (by Remark~\ref{rem:case3inert}). Then twisting
by $\chi_3$ (also a character of order $4$ while restricted to the
inertia subgroup) cancels one of the characters and the twisted
representation has conductor valuation $1$.

\begin{thm}
  Suppose that $K/\Q$ is imaginary quadratic. Then the twisted
  representation $\rho_{\tilde{E},p}\otimes \chi$ descends to a
  $2$-dimensional representation of $\Gal_\Q$ attached to a newform
  $g$ of weight $2$, Nebentypus $\varepsilon$ and level $N$ given by
  \[
    N=2^{a}\cdot3^b \cdot \prod_{q \in S(\Et)}q\cdot \prod_{q \in Q_{\pm
        \pm}}q^{2}.
  \]
  The value of $a$ is one of:
  \[
    a=\begin{cases}
      2 & \text{ if } 2 \text{ is inert},\\
      1,2 & \text{ if }2 \text{ splits},\\
      4 & \text{ if }2 \text{ ramifies but }2\nmid d,\\
      8 & \text{ if }2 \mid d.
    \end{cases}
  \]
  and the value of $b$ is one of
  \[
    b = \begin{cases}
      2,3 & \text{ if } 3 \text{ is split},\\
      1,3 & \text{ if } 3 \text{ is inert},\\
      5 & \text{ if } 3 \text{ ramifies}.
    \end{cases}
  \]
  Furthermore, the coefficient field is some quadratic extension of
  $\Q(\chi)$.
  \label{thm:levelandnebentypus2}
\end{thm}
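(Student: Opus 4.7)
My plan is to follow the blueprint of the proof of Theorem~\ref{thm:levelandnebentypus} with the Hecke character supplied by Theorem~\ref{thm:charexistence2} in place of the $t=2$ one. First I would descend $\rho_{\tilde{E},p}\otimes\chi$ from $\Gal_K$ to $\Gal_\Q$: the $\Q$-curve hypothesis (from Section~\ref{section:FC-BC}) gives $^\tau\rho_{\tilde{E},p}\cong\rho_{\tilde{E},p}\otimes\psi_{-3}$, and combined with item~(3) of Theorem~\ref{thm:charexistence2} ($^\tau\chi=\chi\cdot\psi_{-3}$) one obtains $^\tau(\rho_{\tilde{E},p}\otimes\chi)\cong\rho_{\tilde{E},p}\otimes\chi$. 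The Schur-lemma/square-root argument recalled in the proof of Theorem~\ref{thm:levelandnebentypus} then produces an extension $\tilde{\rho}:\Gal_\Q\to\GL_2(\overline{\Q}_p)$, unique up to the twist by the quadratic character attached to $K/\Q$. Using item~(1) of Theorem~\ref{thm:charexistence2} and the oddness of $\tilde\rho$, the determinant computes to $\varepsilon\,\chic$, identifying the Nebentypus. Modularity of $\tilde\rho$ follows from Serre's conjecture.

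The remaining work is the conductor computation at each rational prime, via Henniart's formula~(\ref{eq:Henniart}) applied to $\Ind_{G_{K_\id{q}}}^{G_{\Q_q}}(\rho_{\tilde{E},p}|_{G_{K_\id{q}}}\otimes\chi_\id{q})=\tilde\rho|_{G_{\Q_q}}\oplus(\tilde\rho|_{G_{\Q_q}}\otimes\mu_{K_\id{q}})$. For $q$ coprime to $6d$ unramified in $K/\Q$ the exponent equals $v_\id{q}(N_{\tilde{E}})$ because $\chi_\id{q}$ is unramified. For $q\in Q_{\pm\pm}$, $q\neq t$, Lemma~\ref{lemma:conductorramifiedprimes} yields type $\rm{IV}^*$ with local exponent $2$, and Remark~\ref{rem:twistlevel} shows the inertial Weil--Deligne type has order $3$; since $\chi_\id{q}$ has $2$-power order this cannot be cancelled and both summands of the induction have equal exponent (because $\varepsilon$ is ramified on $Q_{+-}\cup Q_{++}\cup Q_{-+}$), giving contribution $q^2$.

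At primes dividing $2$ I would run through the four cases of Lemma~\ref{lemma:case2conductor2} with the conductor of $\chi_{\id{p}_2}$ read off from Theorem~\ref{thm:charexistence2}: inert contributes $2$; split forces $a\in\{1,2\}$ depending on whether $(A,B)$ is mixed-parity or both odd; $2$ ramified with $2\nmid d$ gives exponent $4$ (the conductor-$2$ character at the split-off step doubles after induction); and $2\mid d$ gives $8$ (good reduction plus an order-$4$ twist with $\id{p}_2^5$-conductor). At primes above $3=t$, Lemma~\ref{lemma:case2conductor3} distinguishes split, inert and ramified subcases. When $3$ ramifies, combining $v_\id{q}(N_{\tilde{E}})=8$ with $\chi_t=\varepsilon_t$ and $\delta(K_\id{q}/\Q_q)=1$ in~(\ref{eq:Henniart}) produces $b=5$. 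The interesting case is $3$ inert with type $\rm{III}$: by Remark~\ref{rem:case3inert} the inertial type is principal series cut out by an order-$4$ character, and the order-$4$ local character $\chi_3$ cancels exactly one inertial factor, dropping the exponent to $b=1$. The remaining subcases are routine. Finally, the coefficient field statement follows from Remark~\ref{rem:coefffield}: for $p$ inert in $K/\Q$ the identity $\trace(\tilde{\rho}(\Frob_p))^2=a_p(\tilde{E})\chi(\Frob_p)+2\varepsilon(\Frob_p)p$ shows the Hecke eigenvalues lie in a (possibly trivial) quadratic extension of $\Q(\chi)$.

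The hard part will be the bookkeeping at the prime $3$, specifically verifying the cancellation phenomenon in the inert type-$\rm{III}$ case (matching up the inertial character of $\tilde{E}$ with $\chi_3$) and the subcase split-off when $3$ splits and $3\mid AB$, together with the correct identification of $a$ in the ramified-at-$2$ subcases, since these do not have an exact analogue in Theorem~\ref{thm:levelandnebentypus}.
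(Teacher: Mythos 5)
Your proposal is correct and follows essentially the same route as the paper: the descent, Nebentypus and modularity are obtained by repeating the argument of Theorem~\ref{thm:levelandnebentypus} with the character from Theorem~\ref{thm:charexistence2}, and the exponents $a$, $b$ and the $q^2$ contributions are read off from Henniart's formula~(\ref{eq:Henniart}) together with Lemmas~\ref{lemma:case2conductor2}, \ref{lemma:case2conductor3}, \ref{lemma:conductorramifiedprimes} and Remarks~\ref{rem:twistlevel}, \ref{rem:case3inert} — indeed your case analysis is more explicit than the paper's, which simply cites these lemmas. (One small caveat: at primes of $Q_{--}$ the character $\varepsilon$ is unramified, so the equality of the two summands' exponents there should instead be justified by noting that both are tame with exponent at most $2$ while Henniart forces their sum to be $4$.)
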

\begin{proof}
  The extension existence result and its Nebentypus description is
  proven exactly in the same way as in
  Theorem~\ref{thm:levelandnebentypus}. The conductor result is clear
  for all odd unramified primes, since the curve has semistable
  reduction (by Lemma~\ref{lemma:multredEt}) and $\chi$
  is unramified at such primes.
  
  If $\id{q}\mid q$ is an odd ramified prime ideal not dividing $3$,
  Lemma~\ref{lemma:conductorramifiedprimes} implies that
  $v_{\id{q}}(\rho_{\tilde{E},p}) = 2$. The same holds for
  $\rho_{\tilde{E},p}\otimes \chi$ (as explained before). Then the
  conductor formula of the induced representation (\ref{eq:Henniart})
  implies that the four dimensional representation has conductor 
  valuation $4$ at such primes, so the extended representation has
  conductor valuation $2$.

  At the prime $2$ the character $\chi_2$ is unramified when $2$ is
  unramified in $K/\Q$, hence the twisted representation (and its
  extension) has the same conductor as $\Et$. If $d \equiv 1 \pmod 4$
  then $\chi_{2}$ has conductor $2$, so the twisted
  representation has conductor valuation $4$ and its induction
  conductor valuation $8$, so $\tilde{\rho}_p$ has conductor valuation
  $4$ at the prime $2$. When $2 \mid d$, $\chi_{2}$ has conductor $5$, so the
  twisted representation valuation $10$, the induced one $16$ and
  $\tilde{\rho}_p$ has conductor valuation $8$ as claimed.

  At last we need to prove the exponent for the prime $3$. If $3$
  ramifies in $K/\Q$, $v_{\id{p}_3}(N(\Et))=8$ (by
  Lemma~\ref{lemma:case2conductor3}) and the character has conductor
  valuation at most $1$, so the induced representation has valuation $10$ and
  $\tilde{\rho}_p$ has valuation $5$. If $3$ splits, the twisted
  representation has conductor exponent $2$ or $3$ (so does
  $\tilde{\rho}_p)$, while in the inert case, as explained before, the
  twisted representation has conductor valuation $1$ or $3$.
\end{proof}
Let $\tilde{\rho}_p$ denote the extension of
$\rho_{\Et,p}\otimes \chi$.

\begin{coro}
  Suppose that $p \nmid 6d$ and suppose that the residual Galois
  representation $\overline{\tilde{\rho}_p}$ is absolutely
  irreducible. Then there exists a newform
  $g \in S_2(\Gamma_0(n),\varepsilon)$, where
  \[
  n=2^a\cdot3^b \cdot \prod_{q\in Q_{\pm \pm}} q^2,  
\]
for $a$ and $b$ as in Theorem~\ref{thm:levelandnebentypus2} such that
$\rho_{\Et,p} \equiv \rho_{g,K,p}\otimes \chi^{-1} \pmod{\id{p}}$, where
$\rho_{g,K,p}$ is the restriction of the representation $\rho_{g,p}$
to the Galois group $\Gal_K$ and $\id{p}$ is a prime ideal of
$\overline{\Q}$ dividing $p$.
\label{coro:loweringlevelEt}
\end{coro}
\begin{proof} Mimics the one of Corollary~\ref{coro:loweringlevelE}.
\end{proof}
We can give a precise formula for the level of the form attached to
the trivial solution $(\pm 1,0,1)$.
\begin{prop}
  \label{prop:conductorCM2}
  The extension of the trivial solutions $(\pm 1,0,1)$
  correspond to forms $g_{\pm}$ with complex multiplication in the space
  $S_2(\Gamma_0(n),\varepsilon)$ where
  $n=2^a\cdot3^b \cdot \prod_{q\in Q_{\pm \pm}} q^2$, with
  \[
    a = \begin{cases}
      2 & \text{ if }d\equiv 3 \pmod 4,\\
      4 & \text{ if }d \equiv 1\pmod 4,\\
      8 & \text{ if } 2\mid d.
    \end{cases}
  \]
  and with
  \[
    b = \begin{cases}
      3 & \text{ if }3 \nmid d,\\
      5 & \text{ if }3 \mid d.
    \end{cases}
    \]
\end{prop}
\begin{proof}
  Follows the same proof of Theorem~\ref{thm:levelandnebentypus2} with
  the precise formula for the conductor exponent of
  $\tilde{E}_{(\pm 1,0,1)}$ given in Remark~\ref{rem:conductorCM2}.
\end{proof}
\section{Irreducibility of the residual representations of $\rho_{\E,p}$
  and  of $\rho_{\Et,p}$}
\label{section:levellowering}

To use Ribet's level lowering result we need to assure that the
residual representation $\overline{\tilde{\rho}_p}$ is absolutely
irreducible. Ellenberg's result on the image of Galois representations
attached to $\Q$-curves (\cite{MR2075481}) is very useful for this
purpose as it not only provides irreducibility but also implies large image of
the residual representation (i.e.
$\SL_2(\F_p)$ is contained in the image of the residual projective
representation). This plays a crucial role while trying to discard
forms with complex multiplication (a major problem that will be
addressed later). Ellenberg's theorem holds under the hypothesis that
there exists of a prime $\id{q}$, not dividing $6$, where the
$\Q$-curve has multiplicative reduction.  Recall from
Lemma~\ref{lemma:multred} (respectively Lemma~\ref{lemma:multredEt})
that $\E$ (respectively $\Et$) has multiplicative reduction at all
primes $q$ larger than $2$ (respectively $3$) dividing $c$.

Then
we are led to distinguish between two different cases: namely when $c$
is supported only at the primes $\{2,3\}$ (i.e. $2$ and $3$ are the unique
primes in its factorization) where Ellenberg's result does not apply, and
when $c$ is divisible by a prime larger than $3$.

When $c$ is only supported at $\{2,3\}$, the value of the level $N$
belongs to a finite list, hence we can compute all such spaces and try
to discard their forms for not being related to putative
solutions. This is precisely the strategy we follow while studying
equation~(\ref{eq:ben-chen}). However, the prime $3$ does not play any
special role while studying solutions of (\ref{eq:24p}), hence if
$(a,b,c)$ is a non-trivial primitive solution of it, and $3\mid c$,
then we are in the hypothesis of Ribet's lowering the level result. A
crucial hypothesis in Ribet's theorem is that the residual image is
absolutely irreducible, hence we need a different approach to achieve
such a goal.

\subsection{The case $c$ supported in $\{2,3\}$.} The results of the
present subsection will only be applied while studying solutions of
equation~(\ref{eq:24p}). Suppose that the unique possible
primes dividing $c$ are $2$ and $3$. Recall that $c$ being divisible
by $2$ (respectively by $3$) implies that $d \equiv 7 \pmod 8$
(respectively $d \equiv 2 \pmod 3$) by Lemma~\ref{lemma:odd}.




\begin{thm}
  There exists an explicit bound $N_K$ such that if $p>N_K$ and
  $(a,b,c)$ is a primitive solution of (\ref{eq:24p}) where $c$ is only
  supported in $\{2,3\}$, then the representation $\rho_{\E,p}$ has
  absolutely irreducible image.
  \label{thm:multiplicativeprime}
\end{thm}
\begin{proof}
  The proof mimics the one presented in \cite{MR2561200} (case
  $(ii)$). 
  Suppose that $c=2^\alpha3^\beta$ so the curve $\E$ has conductor
  $2^a\cdot 3^b$, where $a \le 12$, $b \le 1$ (by
  Lemma~\ref{lemma:2valuation}). Let $\epsilon_3 = 1$ if $3 \mid c$
  and $0$ otherwise. Then
  $N(\tilde{\rho})= 2^s 3^{\epsilon_3}\cdot \prod_{q\in Q_3} q\cdot
  \prod_{q \in Q_1 \cup Q_5 \cup Q_7}q^2$, where $s \le 9$ by
  Theorem~\ref{thm:levelandnebentypus}.  Suppose that the residual
  image of $\tilde{\rho}_p$ is reducible, i.e.
  \begin{equation}
    \label{eq:ss}
    \overline{\tilde{\rho}_p}^{s.s} \simeq \nu \oplus \varepsilon \nu^{-1}\overline{\cyclo},
  \end{equation}
  where as before $\cyclo$ denotes the $p$-adic cyclotomic character.
  In particular, since the conductor of the reduced representation
  divides the conductor of $\tilde{\rho}$,
  $\cond(\nu) \mid N(\tilde{\rho})$. Let $\ell$ be a prime number such
  that $\ell \equiv 1 \pmod{\lcm(4d,\cond(\nu))}$.  In particular,
  $\ell$ splits in $K/\Q$, say $\ell = \id{l}\overline{\id{l}}$.  Then
  on the one hand
  $\trace(\tilde{\rho}_p(\Frob_\ell))=a_{\id{l}}(\E)\chi(\id{l})$ is
  an integer (as the form has an inner twist) and by Hasse's bound it
  satisfies that $|a_{\id{l}}(\E)| \le 2\sqrt{\ell}$. On the other
  hand, (\ref{eq:ss}) and our assumption on $\ell$ implies that
  $\trace(\overline{\tilde{\rho}_p}^{s.s}(\Frob_\ell)) = \ell + 1$. In
  particular, $p \mid \ell+1-a_{\id{l}}(\E)\chi(\id{l})$ (which is
  non-zero), which cannot occur if $p>2\sqrt{\ell}+\ell+1$.
\end{proof}
\begin{remark}
  The constant $N_K$ depends on the first prime congruent to $1$
  modulo $\lcm(4d,N(\tilde{\rho}))$. We can improve the bound for the
  conductor of $\nu$ in the previous theorem. From (\ref{eq:ss}) it
  follows that actually
  $\cond(\nu)\cdot \cond(\nu^{-1}\varepsilon) \mid
  N(\tilde{\rho})$. If $3 \mid c$ then $\varepsilon$ is unramified at
  $3$, and $\tilde{\rho}$ has conductor exponent one at $3$, then
  $3 \nmid \cond(\nu)$. For odd ramified primes $q$, since the
  conductor of $\varepsilon$ is square-free,
  $v_q(\cond(\nu)) \le 1$. Then if
  $\tilde{d} = \frac{d}{2^{v_2(d)}}$ (i.e. the prime to $2$ part of
  $d$) actually $\cond(\nu) \mid 2^4\cdot \tilde{d}$, so we can
  replace the least common multiple by $16\tilde{d}$.

  Regarding the least minimum prime number $\ell$ congruent to $1$
  modulo $16\tilde{d}$, according to Dirichlet's theorem,
  $1/\varphi(16\tilde{d})$-th of the primes are congruent to $1$
  modulo $16\tilde{d}$, but giving a precise bound on the first such
  prime is very ineffective for computational purposes.
\label{rem:largeimagebound}
\end{remark}

\subsection{The case $c$ not supported in $\{2,3\}$}

Assume on the contrary that there exists an odd prime $q$ dividing $c$
and not dividing $3$ (so in particular $q \nmid d$). Then we are in the hypothesis of Ellenberg's big
image result (\cite[Theorem 3.14]{MR2075481}).
\begin{thm}[Ellenberg] Given $d$ a positive integer, suppose that
  $E/\Q(\sqrt{-d})$ is a $\Q$-curve which has multiplicative reduction
  at an odd prime $\id{q}$ not dividing $3$. Then, there exists an
  integer $N_d$ such that the projective image of the residual
  representation of $\rho_{E,p}$ is surjective for all primes
  $p >N_d$. Furthermore, some explicit values of $N_d$ are the
  following: $N_1 = 7$, $N_2 = 5$, $N_3 = 7$, $N_5 = 499$, $N_6=563$
  and $N_7=349$.
  \label{thm:ellenberg}
\end{thm}
\begin{proof}
  This result was proved in \cite{MR2075481}, where a method to bound $N_d$ was given. The explicit value of
  $N_d$ was given in: \cite{MR2646760} for $d=1$ and $d=2$ and in
  \cite{MR2561200} for all values of $d$ whose field conductor is at
  most $8$ (with an improvement via a finite computation given in
  \cite{Angelos} for $d=3$). We only need to prove the cases $d=5$ and
  $d=6$.

  Let $N$ be any positive integer, and $\chi$ the character
  corresponding to $K/\Q$ (of conductor ${\mathfrak f}$). Let $\FF$ be
  a Petersson-orthogonal basis for $S_2(\Gamma_0(N))$. Define
\[
(a_m,L_\chi)_N=\sum_{f\in \FF}a_m(f)L(f\otimes\chi,1).
\]
If $M \mid N$, define $(a_m,L_\chi)_N^M$ as the contribution from the
old forms of level $M$. Then one of the main results of Ellenberg in
\cite{MR2075481} is that if for some prime $p$ the value
\begin{equation}
  \label{eq:Ellenbergexplicit}
  (a_1,L_\chi)_{p^2}^{p-\text{new}}=(a_1,L_\chi)_{p^2}-p(p^2-1)^{-1}(a_1-p^{-1}\chi(p)a_p,L_\chi)_p
\end{equation}
is non-zero, the residual image for the prime $p$ is large. Also in the aforementioned
article, lower bounds for $(a_1,L_\chi)_{p^2}^{p-\text{new}}$ are
presented, where the main contribution comes from the first term. In
\cite{MR2176151} (Theorem 1) the following formula is proven
\[
(a_m,L_\chi)_{p^2} = 4\pi \chi(m)e^{-2\pi m/\sigma N log(N)} -E^{(3)}+E_3-E_2-E_1+(a_m,B(\sigma N\log(N))),
\]
where $\sigma$ is taken to be $\frac{q^2}{2\pi}$ and $N=p^2$. Theorem
1 of \cite{MR2176151} provides the following bounds:
\begin{itemize}
\item
  $|(a_m,B(\sigma N \log(N)))| \le 30
  (400/399)^3exp(2\pi)q^2m^{3/2}N^{-1/2}d(N)N^{-2\pi \sigma/q^2}$,
    where $d(N)$ denote the number of divisors of $N$.
    
  \item $|E_1|\le (16/3)\pi^3m^{3/2}\sigma \log(N)\exp(-N/2 \pi m\sigma \log(N))$.
    
  \item $|E_3| \le (8/3) \zeta(3/2)^2 \pi^3\sigma m^{3/2} N^{-1/2}\log(N) d(N)\exp(-N/2 \pi m \sigma \log(N))$.
    
  \item $|E^{(3)}|\le 16\pi^3m \sum_{c>0, N\mid c}\min\{\frac{2}{\pi}\phi(q)c^{-1}\log(c),\frac{1}{6}\sigma N\log(N)m^{1/2}c^{-3/2}d(c)\}$, where $\phi$ is Euler's function.
\end{itemize}
Also the following bounds hold:
\begin{itemize}
\item If the field discriminant is even, then by Proposition 10 of
  \cite{MR2646760}
  \begin{multline*}
  |E_2| \le 64q\phi(q)\pi^5 m^2\left(\frac{\zeta(2)}{6}/N^2+\frac{1}{\pi}\left(\zeta(3)log\left(\frac{eN}{2}\right)\right)-\zeta'(3)N^{-3}\right)\\
+32\pi^5\zeta(7/2)^2m^{5/2}d(N)N^{-7/2}\left((\frac{N^2}{4\pi^2m}+1)(1-\theta)^{-1}+(1-\theta)^{-2}\right)\exp(-\frac{N}{2\pi \sigma m\log(N)})\\
+\frac{512}{3}\zeta(11/2)^2\pi^7m^{7/2}d(N)N^{-11/2}(1-\theta^2)^{-3},
  \end{multline*}
where $x = \sigma N \log(N)$ and $\theta = \exp(-2\pi/x)$.
\item By Lemma 3.13 of \cite{MR2075481}, for $t=1$ or $t=p$,
  \[
    (a_{mt},L_\chi)_p \le 2\sqrt{3}m^{1/2}d(m)(1-\exp(-2 \pi/q\sqrt{p}))^{-1}(4\pi + 16\zeta^2(3/2)\pi^2p^{-3/2}).
    \]
\end{itemize}
We wrote a script in \verb*|PARI/GP| to compute the bound for
$ (a_1,L_\chi)_{p^2}^{p-\text{new}}$ using the previous bounds. The
function depends on a parameter $M$ which is used to compute the bound
for $E^{(3)}$. Its role is to take the first function of the minimum
for all values of $c$ up to $MN$ and the second one for the rest of
them. Here is an example:
\begin{verbatim}
EllenbergBound(503,20,503^2*800)
%1 = 0.022830368857725150995580770803441539591
\end{verbatim}
Since the output is positive, we can take $N_5$ as the prime prior to $503$. Similarly, 
\begin{verbatim}
? EllenbergBound(571,24,571^3*10)
%2 = 0.22729177780123987468499355952058729725
\end{verbatim}
proves the bound $N_{6} = 563$.
\end{proof}

\section{Strategies to discard newforms}
\label{section:methostodiscard}
We need to prove that the newforms in the space
$S_2(\Gamma_0(n),\varepsilon)$ obtained from
Corollary~\ref{coro:loweringlevelE}, while studying
equation~(\ref{eq:24p}), and Corollary~\ref{coro:loweringlevelEt}, while
studying equation~(\ref{eq:ben-chen}), are not related to non-trivial primitive solutions. 
  To discard newforms we will mostly use a strategy due to Mazur
  (which in practice works better for forms without complex
  multiplication). 

\begin{prop}[Mazur's trick]
  Let $(a,b,c)$ be a non-trivial primitive solution, and
  $g\in S_2(n,\varepsilon)$ be such that
  $\overline{\rho_{\E,p}\otimes \chi} \simeq \overline{\rho_{g,K,p}}$. Let $q$
  be a rational prime with $q\nmid pn$. Let $\id{q}$ be a prime of $\Om_K$
  dividing $q$ and define
  \[
    B(q,g;a,b)=\begin{cases}
      N(a_{\id{q}}(\E)\chi(\id{q})-a_q(g)) & \text{ if } q \nmid c \text{ and } q \text{ splits in }K,\\
      N(a_q(g)^2-a_q(\E)\chi(q)-2q\varepsilon(q)) & \text{ if } q \nmid c \text{ and } q \text{ is inert in }K,\\
      N(\varepsilon^{-1}(q)(q+1)^2-a_q(g)^2) & \text{ if }q \mid c.
    \end{cases}
  \]
  Then $p\mid B(q,f;a,b)$.
  \label{prop:Mazurtrick}
\end{prop}
\begin{proof} If $h$ is a rational newform in
  $S_2(\Gamma_0(n),\varepsilon)$ and $K$ is a quadratic field, recall
  the well known formula for the Fourier coefficients of the modular
  form $H$ obtained as the automorphic base change of $h$ to $K$: if
  $q = \id{q}\overline{\id{q}}$ (is split) then $a_{\id{q}}(H) = a_q(h)$,
  while if $q$ is inert in $K$ then
  $a_q(H)= a_q(g)^2-2q\varepsilon(q)$.

  Taking $h=g$ gives the first two statements. The last one
  corresponds to ``Ribet's lowering the level'' condition. The proof
  of this fact (well known to experts) mimics the one given in
  \cite[Lemma 24]{MR2966716}.
\end{proof}
The way we apply the last proposition is as follows: take an odd prime $q$
not dividing $d$, and define
\[
C(q,g)=\prod_{(a,b)\in\F_q^2} B(q,g;a,b),
\]
where the product is over non-zero elements satisfying
equation~(\ref{eq:24p}) modulo $q$. Then the prime $p$ must divide all
the values $C(q,g)$, so we compute a couple of them and compute their
gcd obtaining in some cases a bound for $p$. However, it happens
sometimes that the value $C(q,g)$ is always zero, in which case we cannot discard the newform $g$ for any prime $p$.
A similar strategy holds while studying solutions of
(\ref{eq:ben-chen}), using the curve $\Et$.

\vspace{1pt}

Recall that the trivial solutions $(\pm 1,0,1)$ correspond to  newforms
$g_{\pm}$ with complex multiplication. Since they are solutions for
all values of $p$, Proposition~\ref{prop:Mazurtrick} implies that
$p \mid C(q,g_{\pm})$ for all primes $p$, so $C(q,g_{\pm}) = 0$. In
particular, a safe check of any implementation of Mazur's trick should
fail to discard such forms with complex multiplication.

Modular forms with complex multiplication however have the property
that the image of their Galois representations are not as large as
expected (their image lies in the normalizer of a Cartan group, since
they are the induction of a one dimensional representation from an index
two subgroup of $\Gal_\Q$). In particular, if we can prove that given a non-trivial
primitive solution $(a,b,c)$ there exists a prime $q>3$ dividing $c$
then Ellenberg's large image result (Theorem~\ref{thm:ellenberg})
implies that our representation $\tilde{\rho}_p$ has surjective
projective image for $p$ large enough, hence cannot be congruent to a newform with complex
multiplication and we have good chances to prove non-existence of
non-trivial primitive solutions.

\vspace{2pt}

\noindent
{\bf Problem 2:} how can we discard newforms with complex multiplication when $c$ is supported at $\{2,3\}$?

\vspace{2pt}

This is a very delicate problem, and at the time, we do not know of a
general answer to it. However, for the examples
of the present article some naive ideas are enough to get results.

By Lemma~\ref{lemma:odd}, if $d \not \equiv 7 \pmod 8$ then $c$ cannot
be even. If $d \equiv 7 \pmod 8$ then either $2 \mid ab$, in which case
$c$ is odd, or $2\nmid ab$, in which case $c$ is even. Recall that when
$d \equiv 7 \pmod 8$, there are two possible conductor exponent values
at the prime $2$ (depending on the parity of $ab$). Then in the space
corresponding to solutions where $ab$ is even (where our trivial
solution lies), we know that $c$ is odd, so we have good chances that
it is divisible by a prime larger than $3$ (as $c$ cannot be $1$) and
we can discard the newforms with complex multiplication as they are
not related to non-trivial primitive solutions by Ellenberg's
result. In the other space, there is a priori no reason for Mazur's
trick to fail. This is precisely the case when we study
equation~(\ref{eq:24p}) for $d=7$. The same phenomena occurs while
studying non-trivial primitive solutions $(a,b,c)$ of
equation~(\ref{eq:ben-chen}) with $c$ divisible by $3$.

For this strategy to work completely while studying
equation~(\ref{eq:24p}), we need to rule out the possibility of $c$
being a power of $3$ (so in particular $d \equiv 2 \pmod 3$). Suppose then that $(a,b,c)$ is a non-trivial primitive solution of ~(\ref{eq:24p}) with $c$ divisible by
$3$. Then the modular form $f$ (attached to the extension of
$\rho_{\E,p}\otimes \chi$ by Theorem~\ref{thm:levelandnebentypus}) has
level divisible by $3$ and is congruent to a form $g$ whose level is
not. In particular, we are in the ``lowering the level'' hypothesis at
$3$, hence
  \begin{equation}
    \label{eq:levelloweringat3}
N(\varepsilon^{-1}(3)(3+1)^2-a_3(g)^2) \equiv 0 \pmod p.
  \end{equation}
  In practice, this gives a bound for the possible exponents $p$ where
  a solution supported at $\{3\}$ can exist.

%
%

\section{Solutions of equation $x^4+dy^2=z^p$ for small values of $d$}
\label{section:solvingequation1}
In the present section we study solutions of (\ref{eq:24p}) for
square-free values of $d$ up to ten. For that purpose we follow the
general strategy described in Section \ref{section:generalstrategy}. The
algorithms used for the following examples are available at the web
page \url{http://sweet.ua.pt/apacetti/research.html} as well as the outputs of the computations (in a file labeled \textit{``OutputsEq1.txt''}).

\subsubsection{The equation $x^4+dy^2=z^p$, for $d=1,2,3$} For $d=1,2$
the equation was completely solved in \cite{MR2646760}, where the
authors proved that there are no non-trivial primitive solutions for
$p>2$. For $d=3$, in \cite[Theorem 3]{MR2561200} it was
proved the non existence of non-trivial primitive solutions for
$p>131$. The bound $131$ comes from Ellemberg's bound (see Lemma 8 of
\textit{loc.cit}), but applying \cite[Proposition 5.4]{Angelos} it can be
lowered to $7$, proving non existence of non-trivial primitive
solutions for $p>17$.

\subsubsection{The equation $x^4+5y^2=z^p$} This equation was not
considered before, and our method gives the following result.

\begin{thm}
  Let $p > 499$ be a prime number. Then there are no non-trivial primitive
  solutions of the equation
  \[
    x^4+5y^2=z^p.
  \]
  \label{thm:eq1d5}
\end{thm}
\begin{proof}

  Theorem~\ref{thm:levelandnebentypus} implies that $\varepsilon$ is a
  character of order $4$ and conductor $4\cdot 5$ while $\chi$ has
  order $8$. Let $(a,b,c)$ be a non-trivial primitive solution. Since
  $d \not \equiv 7 \pmod 8$, $c$ cannot be even, so either $c$ is a
  power of $3$, or it is divisible by a prime larger than $3$. In
  the later case, Theorem~\ref{thm:ellenberg}
  implies that $N_d = 499$ so if $p > 499$ the image is large (in
  particular absolutely irreducible). If $c$ is a power of three, we
  can use Theorem~\ref{thm:multiplicativeprime} (and
  Remark~\ref{rem:largeimagebound}) with the prime $\ell = 241$ giving
  the bound $N_K=273$. In particular, if $p > 499$ we are in the
  lowering the level hypothesis, hence by
  Corollary~\ref{coro:loweringlevelE} there exists a newform $g$ in
  $S_2(\Gamma_0(2^7\cdot5^2),\varepsilon)$ or in
  $S_2(\Gamma_0(2^8\cdot5^2),\varepsilon)$.  whose Galois
  representation is congruent modulo $p$ to $\E \otimes \chi$.

\vspace{3pt}

\noindent $\bullet$ The space $S_2(\Gamma_0(2^7\cdot5^2),\varepsilon)$
has $12$ Galois conjugacy classes (of newforms), none of them with complex multiplication. Using Mazur's trick
for primes $3\le q\le 30$ different from $5$ and every $g$ in the space we conclude that
$p\in\{2,7,13\}$.

\vspace{3pt}

\noindent $\bullet$ The space $S_2(\Gamma_0(2^8\cdot5^2),\varepsilon)$
has $55$ Galois conjugacy classes, $24$ of them with complex
multiplication (one of them corresponding to the trivial solution by
Proposition~\ref{prop:conductorCM}). Applying Mazur's trick for primes
$3\le q \le 20$ different from $5$ to forms without complex
multiplication we conclude that $p\in\{2,3,5,7,11\}$. If $c$ is
divisible by a prime larger than $3$ then Ellenberg's result implies
that our form cannot be congruent to a newform with complex
multiplication. If $c$ is a power of $3$, then the forms with complex
multiplication should satisfy the raising the level hypothesis,
i.e. that $p \mid N(16\varepsilon^{-1}(3)-a_3(g)^2)$. This implies
that $p\in\{2,3,5,29,101,139\}$.
\end{proof}

Here is how our script works: in \verb*|Magma| just load the file ``\textit{Eq1d5.mg}'' to get the previous statements.
%
\verbatimfont{\footnotesize}
\begin{verbatim}
> load "Eq1d5.mg"; 
Loading "Eq1d5.mg"
Loading "Mazur42p.mg"
Forms in Space 2^7*5^2:
Forms with CM:
[]
Primes obtained via Mazur's trick for non-CM forms:
{@ 2 @}
{@ 2 @}
{@ 2 @}
{@ 2 @}
{@ 2, 7 @}
{@ 2, 7 @}
{@ 2, 7 @}
{@ 2, 7 @}
{@ 13 @}
{@ 13 @}
{@ 13 @}
{@ 13 @}
Forms in Space 2^8*5^2:
Forms with CM:
[ 1, 2, 3, 4, 5, 6, 7, 8, 13, 14, 17, 18, 21, 22, 23, 24, 29, 30, 31, 32, 33,
34, 44, 45 ]
Primes obtained via Mazur's trick for non-CM forms:
{@ 11, 2, 7 @}
{@ 11, 2, 7 @}
{@ 11, 2, 7 @}
{@ 11, 2, 7 @}
{@ 2, 7 @}
{@ 2, 7 @}
{@ 2, 7 @}
{@ 2, 7 @}
{@ 7 @}
{@ 7 @}
{@ 7 @}
{@ 7 @}
{@ 3 @}
{@ 3 @}
{@ 2, 3 @}
{@ 2, 3 @}
{@ 3, 7 @}
{@ 3, 7 @}
{@ @}
{@ @}
{@ 2, 3, 5, 7 @}
{@ 2 @}
{@ 2 @}
{@ 2 @}
{@ 2 @}
{@ 3 @}
{@ 3 @}
{@ 3 @}
{@ 3 @}
{@ 2, 5 @}
{@ 2, 5 @}
Primes obtained via Mazur's trick for CM forms:
{@ 2 @}
{@ 2 @}
{@ 2 @}
{@ 2 @}
{@ 2 @}
{@ 2 @}
{@ 2 @}
{@ 2 @}
{@ 2, 3 @}
{@ 2, 3 @}
{@ 2, 3 @}
{@ 2, 3 @}
{@ 2, 5 @}
{@ 2, 5 @}
{@ 2 @}
{@ 2 @}
{@ 3, 139 @}
{@ 3, 139 @}
{@ 3, 139 @}
{@ 3, 139 @}
{@ 2, 29 @}
{@ 2, 29 @}
{@ 5, 101 @}
{@ 5, 101 @}
\end{verbatim}
\verbatimfont{\normalsize}
\subsubsection{The equation $x^4+6y^2=z^p$} In this case we can prove the following result.
\begin{thm}
  Let $p > 563$ be a prime number. Then there are no non-trivial primitive
  solutions of the equation
  \[
    x^4+6y^2=z^p.
  \]
  \label{thm:eq1d6}
\end{thm}
\begin{proof}
  Note that since $6 \mid d$, the $c$-value of a non-trivial primitive
  solution $(a,b,c)$ cannot be supported in $\{2,3\}$, so in this case we always
  get large image for $p> 563$ (by
  Theorem~\ref{thm:ellenberg}). Theorem~\ref{thm:levelandnebentypus}
  implies that $\varepsilon$ is a character of conductor $4\cdot3$ and
  order $2$ whereas $\chi$ has order $4$.
  Corollary~\ref{coro:loweringlevelE} implies the existence of a
  newform $g$ attached to a non-trivial primitive solution $(a,b,c)$ lying in
  one of $S_2(\Gamma_0(2^8\cdot 3),\varepsilon)$ or
  $S_2(\Gamma_0(2^9\cdot 3),\varepsilon)$ congruent modulo $p$ to
  $\rho_{\E,p}\otimes \chi$.

\vspace{3pt}

\noindent $\bullet$ The space $S_2(\Gamma_0(2^8\cdot3),\varepsilon)$
has $10$ Galois conjugacy classes. Six of them have complex
multiplication (one of them corresponding to the trivial solution by
Proposition~\ref{prop:conductorCM}). Running Mazur's trick for $q=5$
and $q=7$ for each non-CM form we conclude that $p\in\{2,7\}$.

\vspace{3pt}

\noindent $\bullet$ The space $S_2(\Gamma_0(2^9\cdot3),\varepsilon)$
has $13$ Galois conjugacy classes, three of them with complex
multiplication. Applying Mazur's trick for primes $5\le q\le 20$ to each
non-CM form we conclude that $p\in\{2,5,7\}$.
\end{proof}
\begin{remark}
  If Ellenberg's constant in Theorem~\ref{thm:ellenberg} could be improved to
  $11$ (as we expect), the previous result would hold for $p>11$ as well.
\end{remark}

\subsubsection{The equation $x^4+7y^2=z^p$} Our method allows to prove the following result.
\begin{thm}
  Let $p>349$ be a prime number. Then there are no non-trivial primitive
  solutions of the equation
  \[
    x^4+7y^2=z^p.
  \]
  \label{thm:eq1d7}
\end{thm}
\begin{proof}
  Let $(a,b,c)$ be a non-trivial primitive solution. Since
  $7 \equiv 1 \pmod 3$ we deduce that $c$ cannot be divisible by $3$,
  but it could be a power of $2$, so to get absolutely irreducible
  image we need to use Theorem
  ~\ref{thm:multiplicativeprime}. Remark~\ref{rem:largeimagebound}
  implies we can use Theorem~\ref{thm:multiplicativeprime} with
  $\ell=113$ getting absolutely irreducible image for all primes
  $p \ge 127$. If $c$ is divisible by a prime larger than $3$ then the
  image is large for all primes $p>349$ (by Theorem~\ref{thm:ellenberg}).

  Theorem~\ref{thm:levelandnebentypus} implies that $\varepsilon$ is
  trivial while the character $\chi$ is the quadratic even character
  of conductor $7\cdot8$. Corollary~\ref{coro:loweringlevelE} implies
  the existence of a newform $g$ in $S_2(\Gamma_0(2\cdot7^2))$ or
  $S_2(\Gamma_0(2^8\cdot7^2))$ congruent to $\rho_{\E,p}\otimes \chi$
  modulo $p$.

Let us give two different ways to discard the forms in the first space.
If $g \in S_2(\Gamma_0(2\cdot 7^2))$ is a newform (candidate for
a primitive solution) its base change to $K$ gives a Bianchi modular form whose
twist by $\chi^{-1}$ must correspond to a Bianchi modular form of level
$(\frac{1+\sqrt{-7}}{2})^6 \cdot (\frac{1-\sqrt{-7}}{2}).$ Such space
can easily be computed (using Cremona's algorithm
\cite{MR743014}, available at
\url{https://github.com/JohnCremona/bianchi-progs/releases/tag/v20200713})
the result being also available at the lmfdb (\cite{lmfdb}). There are two forms
whose level has norm $128$, given by \lmfdbbmf{2.0.7.1}{128.4} and
\lmfdbbmf{2.0.7.1}{128.5}, whose level equals
$(\frac{1+\sqrt{-7}}{2})^3(\frac{1-\sqrt{-7}}{2})^4$ and its Galois
conjugate, so none comes from a primitive solution of our equation.

\vspace{3pt}

\noindent $\bullet$ The space $S_2(\Gamma_0(2\cdot 7^2))$ has $2$
Galois conjugacy classes, one of them has rational coefficients
(corresponding to an elliptic curve) and the other with coefficients
in the quadratic extension corresponding to the polynomial
$x^2-2x-7$. None of them have complex multiplication.  Mazur's trick
for primes $3\le q\le 50$ different from $7$ discards the rational form if $p$ is not in
$\{2,7,17\}$. The second form cannot be discarded using Mazur's
trick. Since it does not appear in the space of Bianchi modular forms,
its local type at $7$ must not be the correct one (so the twist by
$\chi^{-1}$ of its base change to $K$ is ramified at
$\sqrt{-7}$). Actually, \verb*|Magma| can compute such local type,
giving that the local component is indeed supercuspidal, induced from
an order $8$ character of the unramified quadratic extension of
$\Q_7$. Such local type does not match the one of our elliptic curve
(induced from an order $4$ character of the same extension), hence
they cannot be congruent.

\vspace{3pt}

\noindent $\bullet$ The space $S_2(\Gamma_0(2^8\cdot 7^2))$ has $98$
Galois conjugacy classes, $17$ of them with rational coefficients and
$30$ forms with complex multiplication (one of them corresponding to
the trivial solution by Proposition~\ref{prop:conductorCM}). Mazur's
trick (for primes $3\le q\le 20$ different from $7$) allows us to
eliminate the for forms without complex multiplication when $p$ is not
in the set $\{2,3,5,7,11,17,23,27,31\}$.
\end{proof}

\section{Solutions of equation $x^2+dy^6=z^p$ for small values of $d$}
\label{section:solvingequation2}
As in the last section, we apply the general strategy to study
solutions of the equation $x^4+dy^2=z^p$ for square-free values of $d$
up to ten.  To avoid false expectations, we want to point out that our
approach does not provide any result for $d=5,7$, and we will explain
what goes wrong in these particular cases (see \cite{2109.07291} for
partial solutions). The outputs can be founded in \textit{``OutputsEq2.txt''}.
\subsubsection{The equation $x^2+y^6=z^p$} This case was considered in \cite{MR2966716}.
\subsubsection{The equation $x^2+2y^6=z^p$} This case is very
interesting, as working with Bianchi modular forms is enough to get
the following result.
\begin{thm}
  Let $p>5$ be a prime number. Then there are no non-trivial primitive
  solutions of the equation
  \[
    x^2+2y^6=z^p .
  \]
\label{thm:eq2d2}
\end{thm}
\begin{proof}
  Following the notation of Section~\ref{section:t=3}, the sets
  $Q_{\pm, \pm}$ are all empty; $\varepsilon$ is the trivial character
  (i.e. the form $g$ does not have Nebentypus) while the character
  $\chi$ corresponds to the quadratic character $\delta_3$ at one of
  the primes dividing $3$ in $\Q(\sqrt{-2})$ and does not ramify at
  any other prime. This is a very interesting example, as the curve
  $\tilde{E}_{(a,b,c)}$ has always good reduction at $2$ and the
  $3$-part of the conductor equals $3(1+\sqrt{-2})$, $3(1-\sqrt{-2})$,
  $9$ or $27$. In particular, it is more efficient to work with
  Bianchi modular forms than with rational ones (to avoid high powers
  of $2$ in the level). The newform $g$ attached to a primitive
  solution satisfies that its base extension to $K$ and its twist by
  $\chi^{-1}$ (which equals $\chi$ as it is quadratic) gets only bad
  reduction at primes dividing $3$.  Computing the respective spaces
  (using Cremona's algorithm, although such spaces are also available
  at \cite{lmfdb}) it turns out that there are no Bianchi modular
  forms at any level but $3^3$.

  Let $(a,b,c)$ be a non-trivial primitive solution. If $c$ is
  divisible by $3$, then the proof of
  Lemma~\ref{lemma:case2conductor3} implies that our curve has
  conductor exponent $2$ at one prime dividing $3$ and $1$ at the
  other. However the space of Bianchi modular forms of such a level is
  the zero space, hence $3$ cannot divide $c$ and $c$ must be
  divisible by a prime larger than $3$, so we are in the hypothesis of
  Ellenberg's large image result.

  The space of Bianchi modular forms of weight $2$ and level $3^3$
  contains three newforms corresponding to the elliptic curves
  \lmfdbecnf{2.0.8.1}{729.4}{a}{1}, \lmfdbecnf{2.0.8.1}{729.4}{b}{1}
  and \lmfdbecnf{2.0.8.1}{729.4}{c}{1}. The second curve has complex
  multiplication and is the base change of a rational elliptic curve
  (it corresponds to the trivial solution, and cannot be congruent to
  $\tilde{E}_{(a,b,c)}$ if $p>5$ by
  Theorem~\ref{thm:ellenberg}). The other two ones (complex
  conjugate of each other) satisfy that $a_5 = -1$. It is easy to
  compute for each possible value of $(a,b)$ modulo $5$ the value of
  $a_5(\Et)$ and verify it belongs to the set $\{2,-7,-10\}$ hence
  both elliptic curves cannot be congruent to an elliptic curve coming
  from a non-trivial primitive solution if $p>5$.
\end{proof}

\subsubsection{The equation $x^2+3y^6=z^p$} This case was considered
in \cite{Angelos}.

\subsubsection{The equation $x^2+5y^6=z^p$} Following our general
strategy, the only non-empty set is $Q_{+-}=\{5\}$ so the character
$\varepsilon$ has conductor $4\cdot 5$, and its local component at $5$
has order $4$. In particular $\chi$ has order
$8$. Corollary~\ref{coro:loweringlevelEt} implies that we need to
compute the spaces $S_2(\Gamma_0(2^4\cdot 3^2 \cdot 5^2),\varepsilon)$
and $S_2(\Gamma_0(2^4\cdot 3^3 \cdot 5^2),\varepsilon)$. By
Remark~\ref{rem:coefffield} the coefficient field of $\tilde{\rho}$ is
a degree eight extension of $\Q$.

\vspace{2pt}

\noindent $\bullet$ The space
$S_2(\Gamma_0(2^4\cdot 3^2 \cdot 5^2),\varepsilon)$ has $15$ conjugacy
classes, three of them with coefficient field $\Q(\sqrt{-1})$, another
three of them with coefficient field a quadratic extension of it, and
the last nine newforms with coefficient field a degree $4$ extension
of $\Q(\sqrt{-1})$ (so of degree eight over $\Q$). There are seven
forms with complex multiplication. Mazur's trick allows to discard all
newforms whose coefficient field does not have the right degree and
some extra ones, but there are four newforms without complex multiplication which pass
systematically Mazur's test, so we cannot reach a contradiction in
this particular case.

\vspace{2pt}

\noindent$\bullet$ The space
$S_2(\Gamma_0(2^4\cdot 3^3 \cdot 5^2),\varepsilon)$ has $24$ conjugacy
classes, six of them with complex multiplication (one of them corresponding to the trivial
solution by Proposition~\ref{prop:conductorCM2}). Once again, some
newforms without complex multiplication pass systematically Mazur's test.

Some partial results can still be obtained using more advanced
elimination techniques (see \cite{2109.07291}).

\subsubsection{The equation $x^2+6y^6=z^p$} In this case we can prove the following result.

\begin{thm}
  Let $p> 563$ be a prime number. Then there are no non-trivial primitive
  solutions of the equation
  \[
    x^2+6y^6=z^p.
  \]
  \label{thm:eq2d6}
  \end{thm}
\begin{proof}
  Let $(a,b,c)$ be a non-trivial primitive solution. Since $d$ is
  divisible by $6$, $c$ is prime to $6$ so in particular $c$ is
  divisible by a prime larger than $3$ and we are in the hypothesis of
  Ellenberg's large image result. In particular,
  Theorem~\ref{thm:ellenberg} implies that the
  residual image is absolutely irreducible for all primes
  $p > N_6 = 563$.

  Since $d$ is only divisible by the primes $2$ and $3$, all sets
  $Q_{\pm,\pm}$ are empty, so the character $\varepsilon$ equals the
  quadratic character of conductor
  $12$. Theorem~\ref{thm:levelandnebentypus2} implies the existence of
  a quadratic character $\chi$ of conductor
  $3 \cdot\langle 2,\sqrt{-6}\rangle^5$ and
  Corollary~\ref{coro:loweringlevelEt} implies that we need to compute
  the space $S_2(\Gamma_0(2^8\cdot 3^5),\varepsilon)$. Such space has
  dimension $1152$ and splits into 58 Galois conjugacy classes of newforms, eight of them having complex multiplication so Ellenberg's result implies that we can  discard them. We discard the remaining newforms as follows:
  
  \noindent $\bullet$ We run Mazur's trick for primes $5\le q\le 13$
  for the first (in \verb*|Magma|'s order) $43$ newforms without
  complex multiplication and it follows that $p\in\{2,3,5,11,7,37\}$.

\noindent $\bullet$ The last $7$ newforms have a large coefficient
field (of degrees $48$, $72$ and $144$) and for some unclear reason
\verb*|Magma| is unable to compute norms of elements in such large
fields. To overcome this problem, we used \verb*|Magma| to compute
the coefficients $a_5$ and $a_{11}$ of each of these forms and apply
Mazur's trick in \verb*|PARI/GP| for $q=5,11$ by hand (where the norms
are computed within a few seconds). It follows that $p\in\{2,11,13\}$.

\end{proof}
\begin{remark}
  If Ellenberg's constant in Theorem~\ref{thm:ellenberg} could be
  improved to $11$ (as we expect), the previous result would hold for
  $p>37$. It is probably the case that the result can even be improved
  to $p>17$ via adding extra primes $q$ into Mazur's test, but we did
  not pursue this objective because all involved computations are very
  time consuming (due to the fact that the coefficient fields have
  huge dimension).
\end{remark}

\subsubsection{The equation $x^2+7y^6=z^p$} The set $Q_{-+}=\{7\}$
while all other ones are empty. The character $\varepsilon$ has order
$2$ and conductor $21$, while $\chi$ has order $4$. Let $(a,b,c)$ be a
non-trivial primitive solution. Corollary~\ref{coro:loweringlevelEt}
implies the existence of a newform in one of the spaces
$S_2(\Gamma_0(2^t\cdot 3^s \cdot 7^2),\varepsilon)$, for $t=1,2$ and
$s=1,3$.

Note that the level valuation at $2$ equals $1$ when $c$ is even and
$2$ when $c$ is odd, by Lemma~\ref{lemma:case2conductor2}. In
particular, Ellenberg's large image result applies to all newforms in
$S_2(\Gamma_0(2^2\cdot 3^s \cdot 7^2),\varepsilon)$ (so we can discard
the newforms with complex multiplication in such spaces).

\vspace{2pt}

\noindent $\bullet$ The space
$S_2(\Gamma_0(2\cdot 3\cdot 7^2),\varepsilon)$ has $2$ Galois conjugacy
classes of newforms, but Mazur's trick only allows us to discard the
second one (in \verb*|Magma|'s order).

\vspace{3pt}

\noindent $\bullet$ The space
$S_2(\Gamma_0(2^2\cdot 3\cdot 7^2),\varepsilon)$ has $4$ Galois conjugacy
classes, one of them with complex multiplication (which can be discarded). Applying
Mazur's trick with primes $1\le q\le 20$ to the forms without complex multiplication we
can discard the three newforms if $p$ does not belong to the set $\{2,3,7\}$.

\vspace{3pt}

\noindent $\bullet$ The space
$S_2(\Gamma_0(2\cdot 3^3\cdot 7^2),\varepsilon)$ has $6$ Galois conjugacy
classes but only three of them can be proved to be not related to primitive
solutions using Mazur's trick. We cannot discard the remaining
three ones.

\vspace{3pt}

\noindent $\bullet$ The space
$S_2(\Gamma_0(2^2\cdot 3^3\cdot 7^2),\varepsilon)$ has $7$ Galois conjugacy
classes, three of them having complex multiplication (one coming from the trivial solution
by Proposition~\ref{prop:conductorCM2}). All forms having complex multiplication can be discarded
using Ellenberg's large image result. Applying Mazur's trick for primes
$1\le q\le 20$ we can discard the remaining ones if $p>7$.

Still, some partial results can also be obtained for this particular
equation using more advances elimination techniques (see
\cite{2109.07291}).
  \bibliographystyle{alpha}
  \bibliography{biblio}
\end{document}